\documentclass[12pt]{article}


\usepackage[margin=1in]{geometry}  
\usepackage{graphicx}              
\usepackage{amsmath}               
\usepackage{amsfonts}              
\usepackage{amsthm}                
\usepackage{color}
\usepackage{tabulary}
\usepackage{caption}
\usepackage{subcaption}
\usepackage{amssymb}
\usepackage{subfig}

\newenvironment{theorem}[2][Theorem]{\begin{trivlist}
\item[\hskip \labelsep {\bfseries #1}\hskip \labelsep {\bfseries #2}]}{\end{trivlist}}

\newenvironment{corollary}[2][Corollary]{\begin{trivlist}
\item[\hskip \labelsep {\bfseries #1}\hskip \labelsep {\bfseries #2}]}{\end{trivlist}}

\newtheorem{thm}{Theorem}[section]
\newtheorem{lem}[thm]{Lemma}
\newtheorem{prop}[thm]{Proposition}
\newtheorem{cor}[thm]{Corollary}
\newtheorem{conj}[thm]{Conjecture}
\newtheorem{defn}[thm]{Definition}
\newtheorem{rmk}[thm]{Remark}

\newtheorem{ques}[thm]{Question}

\DeclareMathOperator{\PR}{PSL_2(\RR)}

\DeclareMathOperator{\Aut}{Aut}
\DeclareMathOperator{\Homeo}{Homeo}
\DeclareMathOperator{\Homeop}{Homeo_+}

\DeclareMathOperator{\CO}{CO}
\DeclareMathOperator{\COL}{COL}

\DeclareMathOperator{\Stab}{Stab}
\DeclareMathOperator{\Fix}{Fix}

\newcommand{\NN}{\mathbb{N}}      %
\newcommand{\ZZ}{\mathbb{Z}}      
\newcommand{\RR}{\mathbb{R}}      
\newcommand{\DD}{\mathbb{D}}      
\newcommand{\HH}{\mathbb{H}}      

\date{}

\begin{document}

\title{Fuchsian Groups, Circularly Ordered Groups, and Dense Invariant Laminations on the Circle}

\author{Hyungryul Baik}

\maketitle

\abstract{ We propose a program to study groups acting faithfully on $S^1$ in terms of number of pairwise transverse dense invariant laminations. 
We give some examples of groups which admit a small number of invariant laminations as an introduction to such groups. Main focus of the present paper is to characterize Fuchsian groups in this scheme. We prove a group acting on $S^1$ is conjugate to a Fuchsian group if and only if it admits 
three very-full laminations with a variation of the transversality condition. Some partial results toward a similar characterization of hyperbolic 3-manifold groups which fiber over the circle have been obtained. This work was motivated by the universal circle theory for tautly foliated 3-manifolds developed by Thurston and Calegari-Dunfield. } 


\section{Introduction} 
\label{sec:introduction}
We say a group is $\CO$ if it is circularly orderable. See \cite{Calegari04} for general background for circular ordering of groups. It is well known that a group is $\CO$
if and only if it acts faithfully on $S^1$. In this paper,  we only talk about circularly ordered groups. More precisely, a group $G$ comes with an injective homomorphism 
from $G$ to $\Homeop(S^1)$ where $\Homeop(S^1)$ is the group of all orientation-preserving homeomorphisms of $S^1$. 
Abusing the notation, we identify $G$ with its image under this representation and regard it as a subgroup of $\Homeop(S^1)$, ie., 
we consider the subgroups of $\Homeop(S^1)$. There is a reason why we emphasize this: the properties we will define may depend on the circular order on a group. 
So, if we just talk about abstract group which is circularly orderable without specifying the actual circular order, there is a possible ambiguity. 
Since we care only about the topological dynamics, the groups are considered up to topological conjugacy, i.e., conjugacy by elements of $\Homeop(S^1)$. 

Circularly orderable groups arise naturally in low-dimensional topology. Thurston showed that for a 3-manifold $M$ admitting a taut foliation, $\pi_1(M)$ admits a faithful action on the circle (which is now called a universal circle) in his unfinished manuscript (see \cite{Thurston97}). In \cite{CalDun03}, Calegari and Dunfield completed the construction and generalized this to 3-manifolds admitting essential laminations with solid torus guts. Universal circles from taut foliations come with a pair of transverse dense invariant laminations. 
This provides a motivation to study those groups acting on $S^1$ with some invariant laminations. We suggest a new classification of the subgroups of $\Homeop(S^1)$ in terms of the number of dense invariant laminations they admit. 
In this paper, we mainly focus on the case of groups acting faithfully on $S^1$ with two or three different very-full invariant laminations. We also give motivation for this classification by demonstrating interesting examples and questions. 

By a Fuchsian group, we mean a torsion-free discrete subgroup of $\PR$ (up to conjugacy by an element of $\Homeop(S^1)$). Recall that $\PR$ is naturally identified with the group of orientation-preserving isometries of the hyperbolic plane $\HH^2$. For a collection $\mathcal{C}$ of $G$-invariant laminations, being \emph{pants-like} means that a pair of leaves from two different laminations in $\mathcal{C}$ shares a common endpoint if and only if the shared endpoint is the fixed point of a parabolic element of $G$. For other terminologies, see Section 2. 
\begin{theorem}{[Main Theorem]}
Let $G$ be a torsion-free discrete subgroup of $\Homeop(S^1)$. Then $G$ is a Fuchsian group such that $\HH^2/G$ is not the thrice-punctured sphere if and only if 
$G$ admits a pants-like collection $\{\Lambda_1, \Lambda_2, \Lambda_3\}$ of three very-full $G$-invariant laminations. 
\end{theorem}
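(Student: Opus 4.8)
The statement is an equivalence, and I would treat the two implications by quite different methods. For the forward implication, suppose $G<\PR$ is torsion-free discrete with $S:=\HH^2/G$ not a thrice-punctured sphere; the task is to produce the three laminations on $S^1=\partial\HH^2$. The plan is to build each $\Lambda_i$ as the ``circle lamination'' of a geodesic lamination $L_i$ on $S$: take the full preimage of $L_i$ in $\HH^2$, record the endpoint pairs on $S^1$, and close up. The finiteness condition defining very-fullness (all complementary gaps finite-sided ideal polygons) I would guarantee by taking each $L_i$ to contain a geodesic realization of an ideal triangulation of $S$ when $S$ has a cusp, and, when $S$ is closed, by taking $L_1,L_2$ to be the stable/unstable laminations of a pseudo-Anosov $\varphi$ all of whose singularities are $3$-pronged (these exist on every closed hyperbolic surface) and $L_3$ the stable lamination of a conjugate $\psi\varphi\psi^{-1}$. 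The pants-like condition says that leaves from different $\Lambda_i$ meet on $S^1$ only at parabolic fixed points, and --- this is the delicate point --- it must be checked for the \emph{closures}, which contain extra ``limit leaves'' with possibly non-parabolic endpoints; I would arrange it by choosing the $L_i$ (the defining triangulations, or the mapping class $\psi$) generically, so that the bad coincidences are avoided on a residual set. This is exactly where the hypothesis $S\neq$ thrice-punctured sphere enters: such a surface carries essentially one ideal triangulation up to its finite mapping class group and admits no pseudo-Anosov map, so the genericity argument has no room to run. Small or infinite-type quotients ($\HH^2$, the annulus, the once-punctured disk) I would dispatch by hand with explicit triangulations.

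For the converse --- the substantive direction --- suppose $G$ is torsion-free discrete in $\Homeop(S^1)$ and carries a pants-like collection $\{\Lambda_1,\Lambda_2,\Lambda_3\}$ of very-full laminations. The goal is to recognize $G$ as Fuchsian, and the route I would take is to show that $G$ acts on $S^1$ as a \emph{convergence group} and then invoke the convergence group theorem of Tukia, Gabai and Casson--Jungreis, which identifies convergence groups on $S^1$ up to topological conjugacy with discrete subgroups of $\PR$ (torsion-freeness is preserved, so this yields a Fuchsian group in the paper's sense). The mechanism: every nontrivial $g\in G$ preserves each $\Lambda_i$, hence permutes their leaves and their complementary gaps, which are finite-sided by very-fullness; a ping-pong and compactness analysis of this action on gaps should show each such $g$ is either of ``hyperbolic type'', with two fixed points and north--south dynamics, or of ``parabolic type'', with a single fixed point --- and in the latter case the pants-like hypothesis forces that fixed point to be shared by leaves of two of the laminations, exactly the configuration a parabolic produces. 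Running the same gap-compactness argument for a sequence $g_n\in G$ should yield the convergence property: after passing to a subsequence, $g_n\to\mathrm{const}$ uniformly off a point. Finally one must separately exclude $\HH^2/G$ from being a thrice-punctured sphere; here I would argue directly that the thrice-punctured sphere group admits no pants-like triple of very-full laminations, using that its geodesic laminations are merely finite unions of cusp-to-cusp geodesics (it has no essential non-peripheral simple closed curve), so that any three of them, once closed up, are forced either to share a non-parabolic endpoint or to produce an infinite-sided gap --- a short finite check.

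The main obstacle, as I see it, is the heart of the converse: extracting the convergence property from purely combinatorial lamination data. Two sub-points deserve the most care. First, the role of very-fullness must be pinned down precisely --- it is the finiteness of the gaps that makes the ping-pong on gaps compact and hence forces the north--south/parabolic dichotomy; without it one cannot rule out non-discrete or exotic behavior. Second, the pants-like relaxation of transversality means the three laminations genuinely \emph{fail} to be mutually transverse at the parabolic fixed points, and one must check that this failure is harmless --- that it occurs only along an orbit of points behaving like cusps and does not obstruct the collapsing dynamics. An alternative to citing the convergence group theorem would be to assemble, from the mutual intersection pattern of the three very-full laminations, a $G$-invariant ideal-polygonal cell decomposition of a disk and straighten it to a $G$-invariant hyperbolic metric, exhibiting a conjugacy $S^1\to\partial\HH^2$ directly; the convergence-group route seems cleaner, but either way the same combinatorial input --- very-fullness together with pants-like --- is doing the work. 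In the forward direction the only real work is the genericity lemma producing three \emph{mutually} pants-like laminations; obtaining two is comparatively routine, and I expect the third to demand the most thought.
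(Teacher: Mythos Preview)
Your overall architecture matches the paper's: build laminations on the quotient surface and lift for the forward direction, and reduce to the Convergence Group Theorem of Tukia, Gabai, and Casson--Jungreis for the converse. But the implementations diverge in both directions, and one of yours has a gap.

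\textbf{Forward direction.} The paper does \emph{not} use pseudo-Anosov stable/unstable laminations. It takes three pants decompositions $X_0,X_1,X_2$ of $S$ with no pairwise-homotopic curves (manufactured by applying pseudo-Anosov maps, and Dehn twists in the infinite-type case, to a fixed pants-curve), cuts each pair of pants into two ideal triangles, and lifts. The payoff is that every endpoint of every leaf is then the fixed point of a hyperbolic or parabolic element of $G$; a one-line lemma (if a leaf ends at a hyperbolic fixed point, the axis is a leaf) plus transversality of the three pants-curves then gives the pants-like condition immediately. Your ideal-triangulation approach for cusped finite-area surfaces works cleanly for the same reason --- every endpoint is a cusp, so pants-like reduces to ``no common edge'', a finite check. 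But for closed surfaces your construction has a real problem at exactly the spot you flag: the endpoint set $E(\Lambda^s_\varphi)\subset S^1$ of a pseudo-Anosov stable lamination is \emph{uncountable}, and ``choose $\psi$ generically'' is not an argument that two uncountable $\pi_1(S)$-invariant subsets of $S^1$ are disjoint. Even if $E(\Lambda^s_\varphi)\cap E(\Lambda^s_{\psi\varphi\psi^{-1}})=\emptyset$ turns out to be true, it needs a genuine proof, not genericity in the (countable) mapping class group. The paper's pants laminations sidestep this entirely because their endpoint sets are countable. Your ``dispatch by hand'' for infinite-type quotients is also too quick; the paper spends real effort on half-annulus and half-plane ends.

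\textbf{Converse direction.} Same target, but the paper's engine is more specific than ``ping-pong and compactness on gaps''. The key lemma is a \emph{rainbow dichotomy} for very-full laminations: every $p\in S^1$ either is an endpoint of a leaf or admits a sequence of leaves whose two ends converge to $p$ from opposite sides. For a non-endpoint $p$ this yields a nested sequence of arcs $I_n\searrow\{p\}$ bounded by leaves; since leaves go to unlinked leaves under $G$, any $g$ with $g(I_n)\cap I_n\neq\emptyset$ must satisfy $g(I_n)\subset I_n$, $I_n\subset g(I_n)$, or $g(I_n)\cup I_n=S^1$, and Brouwer locates fixed points. This first gives the elementwise classification ($|\Fix_g|\le 2$), and is then run along a sequence $(g_i)$ to extract the convergence property directly on triples. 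The reason \emph{three} pants-like laminations are needed is precisely here: for any two points $p,q$ that are not cusp points, one of the three $\Lambda_i$ has neither as an endpoint, hence furnishes rainbows at both simultaneously --- two laminations would not suffice. Your proposal does not isolate this mechanism.

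\textbf{Thrice-punctured sphere.} Minor correction: every simple geodesic on the thrice-punctured sphere is cusp-to-cusp, so every endpoint \emph{is} parabolic; the obstruction is not a non-parabolic shared endpoint or an infinite-sided gap but bare transversality --- there is essentially one very-full invariant lamination (the Farey tessellation), so you cannot find two transverse ones, let alone three.
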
 

As we pointed out earlier, saying a group $G$ is Fuchsian means $G$ is conjugate to a group $G' \subset \PR$  by an element of $\Homeop(S^1)$, and $\HH^2/G$ in the statement of the theorem should be understood as $\HH^2/G'$. The theorem provides an alternative characterization of Fuchsian groups in terms of invariant laminations. Note that we do not assume that $G$ is finitely generated. The following is an immediate corollary of Main Theorem. 

\begin{corollary}{}
Let $G$ be a torsion-free discrete subgroup of $\Homeop(S^1)$. Then $G$ is a Fuchsian group such that $\HH^2/G$ has no cusps if and only if $G$ admits a collection $\{\Lambda_1, \Lambda_2, \Lambda_3\}$ of three very-full $G$-invariant laminations such that no leaf of $\Lambda_i$ has a common endpoint of a leaf of $\Lambda_j$ for $i \neq j$. 
\end{corollary}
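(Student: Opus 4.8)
The plan is to deduce the corollary from the Main Theorem via the standard dictionary between cusps and parabolics: the cusps of $\HH^2/G$ are in bijection with the conjugacy classes of maximal parabolic subgroups of $G$, so $\HH^2/G$ has no cusps exactly when $G$ has no parabolic element, i.e. no nontrivial element with a single fixed point on $S^1$. Granting this, in the cusp-free case the biconditional defining a pants-like collection degenerates: with no parabolic fixed points to accommodate, ``a pair of leaves from two different laminations shares a common endpoint if and only if the shared endpoint is a parabolic fixed point'' becomes simply ``no leaf of $\Lambda_i$ shares an endpoint with a leaf of $\Lambda_j$ for $i\neq j$''; moreover a cusp-free hyperbolic surface is certainly not the thrice-punctured sphere, so the exceptional case of the Main Theorem never intervenes. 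This already gives the forward implication: if $G$ is Fuchsian with $\HH^2/G$ cusp-free, the Main Theorem supplies a pants-like collection $\{\Lambda_1,\Lambda_2,\Lambda_3\}$ of three very-full $G$-invariant laminations, and the pants-like condition then forbids any common endpoint of a leaf of $\Lambda_i$ and a leaf of $\Lambda_j$ for $i\neq j$, which is exactly the collection the corollary requires.

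For the reverse implication, suppose $G$ admits very-full $G$-invariant laminations $\Lambda_1,\Lambda_2,\Lambda_3$ such that no leaf of $\Lambda_i$ shares an endpoint with a leaf of $\Lambda_j$ whenever $i\neq j$. The plan is to check that this collection is automatically pants-like and then quote the Main Theorem. The ``only if'' half of the pants-like condition is vacuous, and the ``if'' half becomes vacuous once we know $G$ has no parabolic element, so everything reduces to showing $G$ has no parabolic element. Suppose toward a contradiction that $g\in G\setminus\{1\}$ has a unique fixed point $p\in S^1$. On $S^1\setminus\{p\}$ the homeomorphism $g$ is orientation-preserving and fixed-point-free, hence conjugate to a translation; fixing such a coordinate, $p$ becomes $\infty$ and $g$ becomes $t\mapsto t+1$. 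The key claim, proved below, is that every very-full $g$-invariant lamination must have a leaf with $p$ as an endpoint; applying it to $\Lambda_1$ and to $\Lambda_2$ produces a leaf of $\Lambda_1$ and a leaf of $\Lambda_2$ both ending at $p$, contradicting the hypothesis. Hence $G$ has no parabolic element, the collection is pants-like, the Main Theorem gives that $G$ is Fuchsian (with $\HH^2/G$ not the thrice-punctured sphere), and the absence of parabolics gives that $\HH^2/G$ has no cusps.

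The one step requiring real work is the key claim. Let $\Lambda$ be a very-full $g$-invariant lamination and suppose no leaf ends at $p=\infty$; then every leaf has both endpoints in $\RR$, and unlinkedness of a leaf with its $g$-image forces every leaf to have endpoints at distance at most $1$. Passing to the geodesic realization (used only for its combinatorics of complementary gaps, since $g$ need not act by isometries), every leaf is then a semicircle of diameter at most $1$, hence lies below a fixed horizontal line $L$, so the open region above $L$ lies in a single complementary gap $\mathcal{G}^*$; and $\mathcal{G}^*$ is the unique gap whose closure contains $p$, since $p$ is not a vertex of any gap and so cannot lie in the closure of two disjoint gaps. As $g$ fixes $p$ and permutes the gaps, $g(\mathcal{G}^*)$ is again a gap with $p$ in its closure, so $g$ fixes $\mathcal{G}^*$ and permutes its boundary leaves; if there were finitely many, some nonzero power of $g$ would fix each boundary leaf and hence, after squaring if necessary, fix at least two points of $\RR$, contradicting that every nonzero power of $t\mapsto t+1$ is fixed-point-free on $\RR$. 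Thus $\mathcal{G}^*$ has infinitely many sides, contradicting very-fullness of $\Lambda$. The points to be careful about are exactly the ones flagged parenthetically --- using the geodesic model only combinatorially, and the identification and $g$-invariance of $\mathcal{G}^*$ --- but no new idea beyond the Main Theorem enters.
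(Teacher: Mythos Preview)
Your argument is correct and follows the same overall reduction as the paper: both directions of the corollary go through the Main Theorem once one knows that, for a group with three very-full invariant laminations, strong transversality (no shared endpoints between distinct $\Lambda_i$) forces the absence of parabolic elements. The paper's one-line proof of the corollary tacitly invokes Corollary~\ref{cor:parabolicfixedpoints} for this step---a parabolic fixed point is always a leaf endpoint in every very-full invariant lamination---which in turn rests on the rainbow dichotomy of Theorem~\ref{thm:rainbow}. Your proof of this same fact is independent and more hands-on: after conjugating $g$ to $t\mapsto t+1$ on $S^1\setminus\{p\}$, unlinkedness of a leaf with its $g$-image bounds the leaf's diameter by $1$, and you then read off a $g$-invariant gap at $p$ which must therefore have infinitely many sides, contradicting very-fullness. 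The paper's rainbow argument ultimately rests on the same bounded-diameter observation (a rainbow at $p=\infty$ would require leaves of unbounded diameter in the translation coordinate), so the two proofs share a common core; yours simply bypasses the general rainbow machinery in favor of a direct gap argument, while the paper reuses a lemma it has already proved.
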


In Section 3, we present some explicit examples of groups acting on the circle with a specified number of dense invariant laminations. The most interesting case is when a group has exactly two dense invariant laminations. A class of examples will be constructed by considering pseudo-Anosov homeomorphisms of hyperbolic surfaces. One should note that those examples are not Fuchsian groups. In some sense, the main theorem shows that there are clear differences between having two invariant laminations and having three invariant laminations as long as the structure of the invariant laminations is restricted enough.  

Nevertheless, groups admitting a pant-like collection of two very-full laminations are already interesting. We study those groups in Section 8 and the following theorem is a summary of the results. 

\begin{theorem}{} 
Let  $G$ be a torsion-free discrete subgroup of $\Homeop(S^1)$. Suppose $G$ admits a pants-like collection of two very-full laminations $\{\Lambda_1, \Lambda_2\}$. Then an element of $G$ either behaves like a parabolic or hyperbolic isometry of $\HH^2$ or has even number of fixed points alternating between attracting and repelling. In the latter case, one lamination contains the boundary leaves of the convex hull of the attracting fixed points and the other lamination contains the boundary leaves of the convex hull of the repelling fixed points. If we further assume that $G$ has no element with single fixed point, then $G$ acts faithfully on $S^2$ by orientation-preserving homeomorphisms and each element of $G$ has two fixed points on $S^2$. 

\end{theorem}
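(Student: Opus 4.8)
The plan is to study a single nontrivial $g \in G$, nail down its fixed-point dynamics using the two laminations and the pants-like condition, and then build the $S^2$-action from $\Lambda_1$ and $\Lambda_2$. As preliminaries one reduces to the case that $g$ has a fixed point: the very-full lamination $\Lambda_1$ has only finite-sided gaps, $g$ permutes these, and analyzing how a power of $g$ fixes a gap (together with the interleaved-gap structure of $\Lambda_1,\Lambda_2$ forced by the later analysis) shows the rotation number of $g$ is zero. One also checks $\Fix(g)$ is finite — an accumulation point of $\Fix(g)$, combined with density of $\Lambda_1$ and discreteness, is incompatible with $g \ne 1$. So write $\Fix(g) = \{p_1,\dots,p_m\}$ in cyclic order; on each complementary arc $g$ is fixed-point-free hence monotone, so each $p_i$ is attracting or repelling from each side, and on the arc between two consecutive fixed points one endpoint is attracting (from that side) and the other repelling.

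The crux is to show that if $m \ge 2$ then no $p_i$ is semistable (attracting on one side, repelling on the other). Granting this, the previous remark forces the $p_i$ to alternate attracting/repelling around $S^1$, so $m = 2n$; with $m = 1$ the unique fixed point is automatically semistable (``parabolic''), and $n = 1$ is the ``hyperbolic'' case, which gives the first assertion. To rule out a semistable $p = p_i$ with $m \ge 2$: using density and very-fullness of $\Lambda_1$ one produces a leaf of $\Lambda_1$ with $p$ as an endpoint (as a limit of $g$-orbits of leaves accumulating at $p$, using that $\Lambda_1$ is closed and its leaves unlinked), and likewise a leaf of $\Lambda_2$ ending at $p$; by the pants-like hypothesis $p$ is then the fixed point of a parabolic $h \in G$. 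A parabolic fixes only $p$, so on $S^1 \setminus \{p\} \cong \RR$ the map $h$ is fixed-point-free and monotone while $g$ fixes the nonempty finite set $\Fix(g) \setminus \{p\}$; conjugating $g$ by powers of $h$ pushes a fixed point of $g$ toward $p$, and discreteness forces $h^k g h^{-k} = g$ for some $k \ne 0$. Then $g$ commutes with the fixed-point-free $h^k$, so $g$ fixes the entire (infinite, $p$-accumulating) $h^k$-orbit of a fixed point of $g$ in $\RR$, contradicting finiteness of $\Fix(g)$. I expect this to be the main obstacle, mostly in making the ``leaf ending at $p$'' step and the convergence/discreteness argument airtight.

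For the convex-hull statement, fix $g$ with $m = 2n$, $n \ge 2$, attracting set $A = \{a_1,\dots,a_n\}$ and repelling set $R = \{r_1,\dots,r_n\}$ interleaved around $S^1$. For a leaf $\ell$ of $\Lambda_1$ with an endpoint in a small arc near $a_i$, the sequence $g^k\ell$ has one endpoint converging to $a_i$ and the other converging to a fixed point of $g$ which, by the sign of the dynamics on its arc, must also be attracting; choosing $\ell$ appropriately and using unlinkedness, that limit is an adjacent attracting fixed point, so every edge of the ideal polygon $\mathrm{conv}(A)$ is a leaf of $\Lambda_1$, and by very-fullness these edges bound a gap, i.e.\ $\mathrm{conv}(A)$ is a gap of $\Lambda_1$. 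The same argument applied to $g$ with $\Lambda_2$ would force the edges of $\mathrm{conv}(A)$ into $\Lambda_2$ too, so $(a_i,a_{i+1})$ would lie in both laminations and $a_i$ would be a parabolic fixed point by pants-likeness — impossible since $a_i$ is an attracting fixed point of $g$ and $m \ge 2$. Hence $\mathrm{conv}(A)$ is a gap of exactly one lamination, say $\Lambda_1$. Running the whole analysis for $g^{-1}$ (whose attracting set is $R$) shows $\mathrm{conv}(R)$ is a gap of $\Lambda_1$ or of $\Lambda_2$; the first is excluded because an edge $(r_i,r_{i+1})$ of $\mathrm{conv}(R)$ links an edge $(a_i,a_{i+1})$ of $\mathrm{conv}(A)$ (the points interleave as $a_i,r_i,a_{i+1},r_{i+1}$), so $\mathrm{conv}(R)$ is a gap of $\Lambda_2$.

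Finally, assume $G$ has no element with a single fixed point, i.e.\ no parabolics, so by pants-likeness no leaf of $\Lambda_1$ shares an endpoint with a leaf of $\Lambda_2$. Draw $\Lambda_1$ in the northern hemisphere and $\Lambda_2$ in the southern hemisphere of $S^2 = \overline{\DD} \cup_{S^1} \overline{\DD}$ and collapse every closed leaf and closed gap of $\Lambda_1$ in the closed north and of $\Lambda_2$ in the closed south. Since these closed sets meet the equator only in leaf endpoints and ideal vertices, and no $\Lambda_1$-leaf meets a $\Lambda_2$-leaf at an equator point, the collapsed sets are pairwise disjoint, cell-like, and form an upper semicontinuous decomposition of $S^2$; by Moore's theorem the quotient is $S^2$, and the $G$-action on $S^1$, extended into each hemisphere so as to respect the corresponding lamination (the extension being determined on each leaf and gap by its boundary, hence descending to the collapse), gives a faithful orientation-preserving $G$-action on this $S^2$ — faithful because a nontrivial $g$ acting trivially on the quotient would have all orbits inside the finite collapsed classes, forcing a power of $g$ to be the identity on $S^1$. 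For $g \ne 1$ with $2n$ fixed points, $\mathrm{conv}(A)$ and $\mathrm{conv}(R)$ are gaps of $\Lambda_1$ and $\Lambda_2$, so their images are two fixed points of $g$; these are the only ones, since every fixed point of $g$ on $S^1$ is a vertex of one of these two gaps (hence absorbed), and any other $g$-invariant collapsed class would produce an infinite $g$-orbit inside a finite union of leaves and gaps. Making the decomposition's upper semicontinuity and the multiplicativity of the hemisphere extension precise is the second technical point to watch.
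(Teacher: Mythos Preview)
Your overall architecture matches the paper's --- classify elements via fixed-point dynamics, then prove the polygon statement, then apply Moore's theorem --- but two of your key steps diverge from the paper and do not go through as written.

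\textbf{Ruling out semistable fixed points.} You correctly reach the conclusion that a semistable fixed point $p$ (with $m\ge 2$) lies in $E_{\Lambda_1}\cap E_{\Lambda_2}$ and hence, by the pants-like condition, is fixed by some parabolic $h$. From there, however, your discreteness argument is not sound: discreteness of $G$ in $\Homeop(S^1)$ only says that no sequence of distinct elements converges to the identity, and there is no reason the conjugates $h^k g h^{-k}$ (or their successive quotients) should converge to anything in $G$, let alone stabilize. The paper avoids this entirely with a lamination argument: once $p$ is a cusp, infinitely many leaves of each $\Lambda_i$ end at $p$ (the $h$-orbit of one such leaf), so some leaf of $\Lambda_i$ has one end at $p$ and the other in the arc $(p,q_2)$; iterating by $g$ sends that leaf to $(p,q_2)$, which is then a common leaf of $\Lambda_1$ and $\Lambda_2$, contradicting transversality. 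You were one step away from this and took a detour that does not work.

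\textbf{The attracting/repelling polygons.} Your iteration argument needs, for each repelling fixed point $r_i$, a leaf of $\Lambda_1$ with one endpoint in the basin of $a_i$ and the other in the basin of $a_{i+1}$, i.e.\ a leaf separating $r_i$ from the rest. This is exactly a rainbow leaf at $r_i$ in $\Lambda_1$, which exists if and only if $r_i\notin E_{\Lambda_1}$ --- and you never establish this. The paper does it by first proving the alternation: from the classification step one gets that adjacent fixed points lie in different $E_{\Lambda_i}$'s (each is in exactly one, and two consecutive ones cannot share), so all $a_j\in E_{\Lambda_1}$ and all $r_j\in E_{\Lambda_2}$. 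Only then does the rainbow-at-$r_j$ argument produce the edges $(a_j,a_{j+1})$ in $\Lambda_1$. Your sentence ``the same argument applied to $g$ with $\Lambda_2$ would force the edges of $\mathrm{conv}(A)$ into $\Lambda_2$ too'' is also wrong as stated: the analogous argument for $\Lambda_2$ uses rainbows at the $a_j$ (where $\Lambda_2$ has them), and produces the edges $(r_j,r_{j+1})$ of $\mathrm{conv}(R)$, not of $\mathrm{conv}(A)$. So the dichotomy you want emerges from the alternation, not from a contradiction. Finally, the theorem only asserts that the \emph{boundary leaves} of $\mathrm{conv}(A)$ lie in one lamination; your stronger claim that $\mathrm{conv}(A)$ is a gap is not proved and is not needed.

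\textbf{The $S^2$ action.} Here your approach is essentially the paper's: glue two disks along $S^1$, realize $\Lambda_i$ in each hemisphere, collapse leaves and gaps, and invoke Moore. Both the paper and your sketch are somewhat informal about why the decomposition is upper semicontinuous with nonseparating pieces; the paper inserts an extra ``loose'' hypothesis on each $\Lambda_i$ precisely to control this, which your version does not address.
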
 

\noindent \textbf{Acknowledgment} The author started studying the subject of this paper as a student of Bill Thurston who recently passed away. We give tremendous amount of thanks to Bill for the inspiration and support. Dylan Thurston has helped to develop ideas and pointed out whenever the author had some fallacy in the argument. More importantly, Dylan has given a lot of encouragement so that the author has not given up writing up this paper. We would like to thank Juan Alonso, Danny Calegari, John Hubbard and Michael Kapovich for fruitful conversations. Most figures in this paper are due to Eugene Oh and I greatly appreciate. We also thank to the department of Mathematics at UC Davis for the hospitality in the academic year 2012-13. The research was partially supported by Bill Thurston's NSF grant DMS-0910516.


\section{Definitions and Set-up} 
\label{sec:setup}
In the present paper, a group is always assumed to be countable. A faithful (orientation-preserving) action of a group $G$ on the circle is an injective homomorphism $\rho: G \to \Homeop(S^1)$. Once we fix the action, then we often identify $G$ with its image under $\rho$. For the general background on the group actions on the circle, we suggest reading \cite{Ghys01}. 
 \begin{figure}	
	\centering
	\includegraphics[scale=0.5]{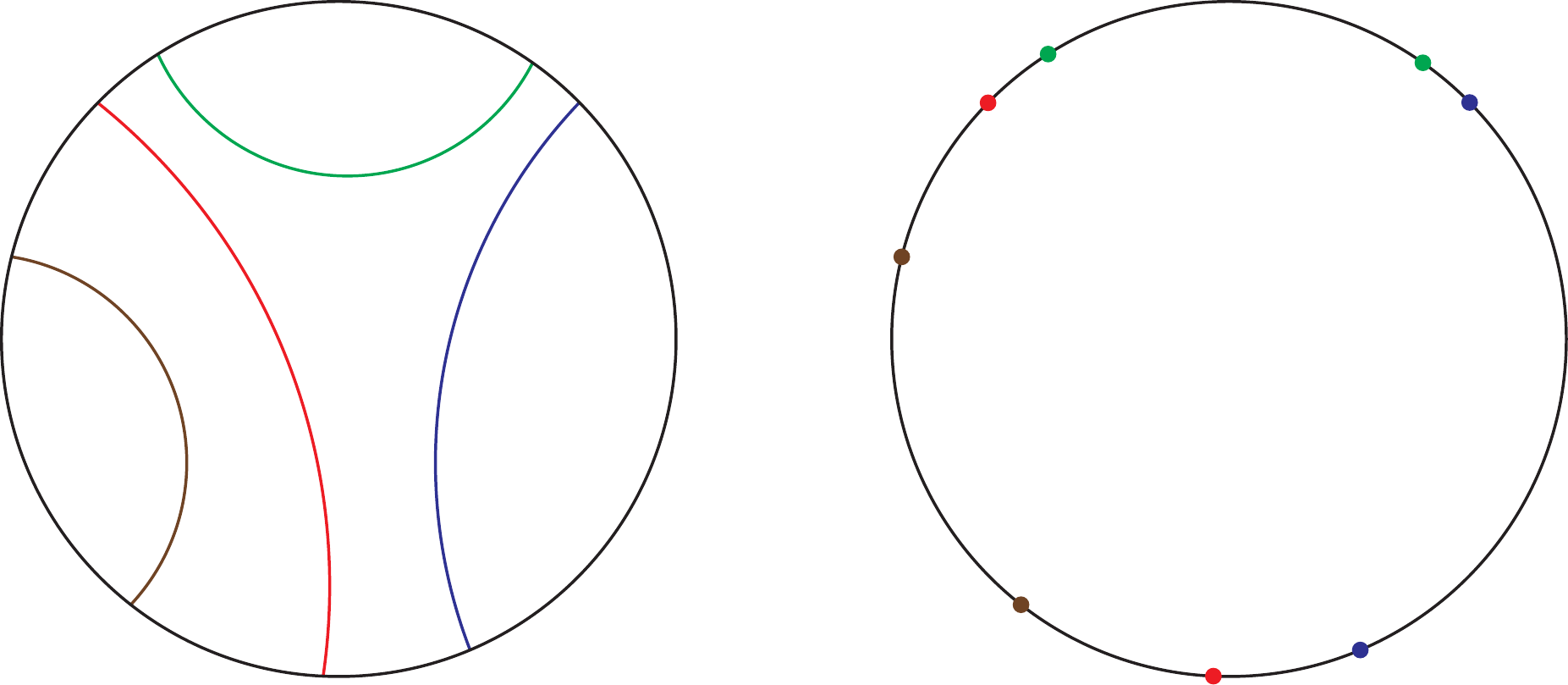}
	\caption{ On the left, a geodesic lamination on $\HH^2$ with four leaves is drawn. On the right, one can see the corresponding lamination of the circle after removing the geodesics and leaving only the endpoints.}
	\label{fig:laminationonthecircles}
\end{figure} 

 The ideal boundary of the hyperbolic plane $\HH^2$ is topologically a circle. A geodesic lamination of $\HH^2$ is a disjoint union of geodesics which is a closed subset of $\HH^2$. If one forgets about actual geodesics of a geodesic lamination of $\HH^2$ and consider only the endpoints of geodesics on the ideal boundary, one gets a set of pairs of points of the circle.  A lamination on the circle is defined as a set of pairs of points of the circle to capture this endpoint data of a geodesic lamination of $\HH^2$ (see Figure \ref{fig:laminationonthecircles}).

Two pairs $(p_1, p_2), (q_1, q_2)$ of unordered two distinct points of $S^1$ are called \emph{linked} if the chord joining $p_1$ to $p_2$ crosses the chord joining $q_1$ to $q_2$ in the interior of the disk bounded by $S^1$. The space of all unordered pairs of two distinct points of $S^1$ is $((S^1 \times S^1) \setminus \Delta)/ (x,y) \sim (y,x) $ where $\Delta =\{ (x,x) \in S^1 \times S^1 \}$. This is homeomorphic to an open M\"{o}bius band and we will denote this space $\mathcal{M}$. A group action on $S^1$ induces an action on $\mathcal{M}$ in the obvious way; this action is not minimal in our examples, since otherwise there could not be any invariant laminations. 

A \emph{lamination} on $S^1$ is a set of unordered and unlinked pairs of two distinct points of $S^1$ which is a closed subset of $\mathcal{M}$. The elements of a lamination (which are pairs of points of $S^1$) are called \emph{leaves} of the lamination. If a leaf is the pair of points $(p, q)$, then the points $p, q$ are called \emph{ends} or \emph{endpoints} of the leaf. For a lamination $\Lambda$, let $E_\Lambda$ or $E(\Lambda)$ denote the set of all ends of leaves of $\Lambda$. 
We also use $\overline{\mathcal{M}}$ to denote the closed M\"{o}bius band and the points on $\partial \mathcal{M} := \overline{\mathcal{M}} \setminus \mathcal{M}$ are called \emph{degenerate} leaves which are single points of $S^1$. 

Alternatively, one can identify the circle with the ideal boundary of $\HH^2$ and consider only the ends of leaves of some geodesic lamination of $\HH^2$. Every lamination on $S^1$ is of this form. Even though the group action on $S^1$ does not extend to the interior of the disk, it is usually better to picture a lamination of $S^1$ as a geodesic lamination of $\HH^2$. Consider a connected component of the complement of the lamination in the open disk. Its closure in the closed disk is called a \emph{gap} or a \emph{complementary region} of the lamination. In other words, a gap of a lamination $\Lambda$ of $S^1$ is the metric completion of a connected component of the complement of the corresponding lamination in $\DD$ with respect to the path metric. We will use $\DD$ to denote the open disk bounded by $S^1$ where the groups we consider act. The disk $\DD$ will be freely identified with the Poincar\'{e} disk model of $\HH^2$ often without mentioning it if there is no confusion. 

Once a group $G$ acts on $S^1$ by homeomorphisms, there is a diagonal action on $\mathcal{M}$. A lamination $\Lambda$ of $S^1$ is said to be $G$-invariant if it is an invariant subset of $\mathcal{M}$ under this induced action of $G$. 

We give names to some properties of laminations. 
\begin{defn} Let $G$ be a group acting on $S^1$ faithfully. A $G$-invariant lamination $\Lambda$ is called
\begin{itemize} 
\item \emph{dense} if the endpoints of the leaves of $\Lambda$ form a dense subset of $S^1$, 
\item \emph{very-full} if all the gaps of $\Lambda$ are finite-sided ideal polygons, 
\item \emph{minimal} if the orbit closure of any leaf of $\Lambda$ is the whole $\Lambda$, 
\item \emph{totally disconnected} if no open subset of $\DD$ is foliated by $\Lambda$, 
\item \emph{solenoidal} if it is totally disconnected and has no isolated leaves, 
\item \emph{boundary-full} if the closure of the lamination in $\overline{\mathcal{M}}$ contains the entire $\partial \mathcal{M}$. 
\end{itemize} 
\end{defn} 
In fact, all properties above except the minimality are independent on the group action. Hence we use those notions for laminations on $S^1$ even when we do not have a group action in consideration. In this paper, very-full laminations are of a particular interest. See Figure \ref{fig:Farey_Graph} for an example \footnote{This figure is borrowed from Lars Madsen at Aarhus University.}. 

\begin{figure}	
	\centering
	\includegraphics[scale=0.3]{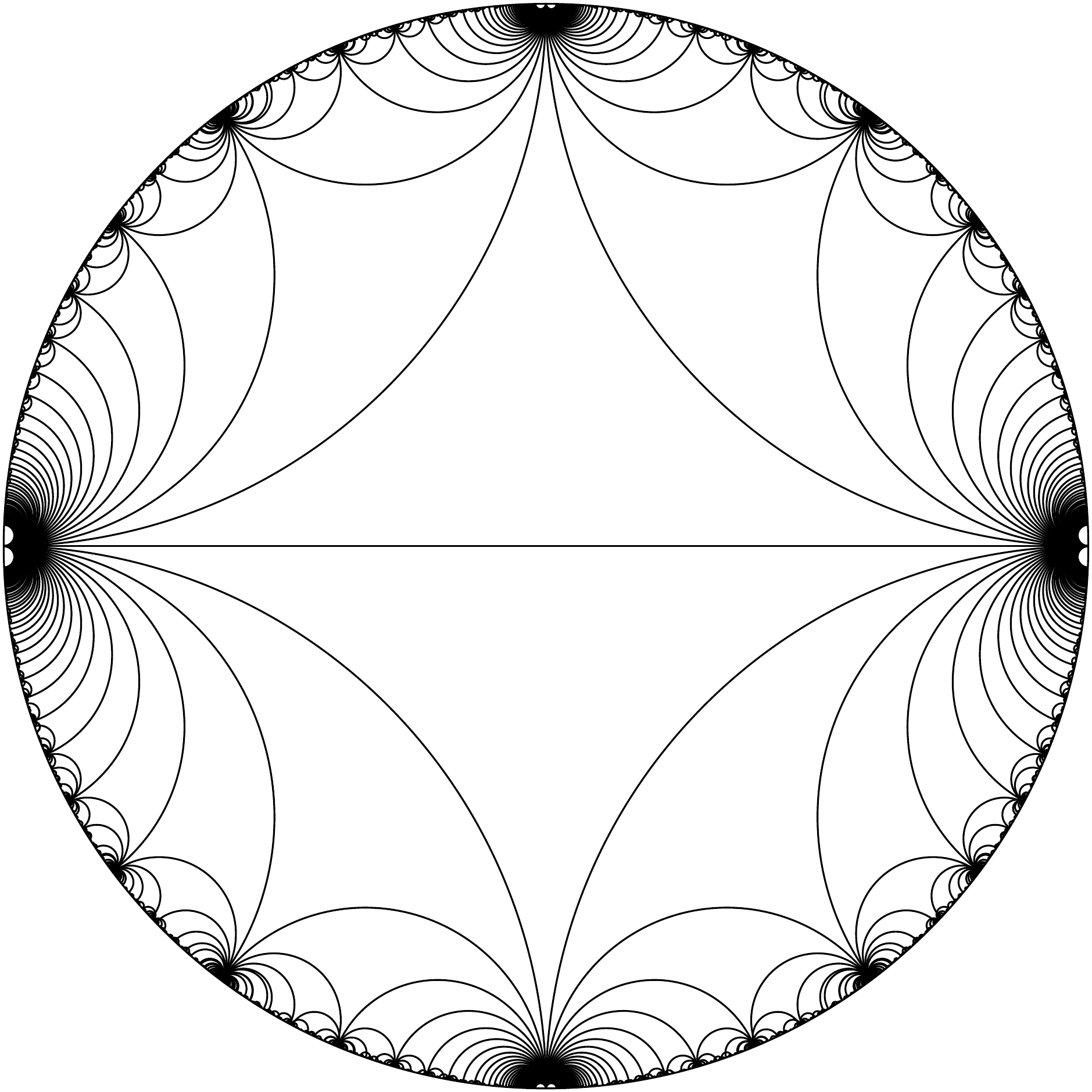}
	\caption{ Farey diagram is a rather famous example of very-full laminations.}  
	\label{fig:Farey_Graph}
\end{figure} 

A continuous map $f$ from $S^1$ to itself of degree 1 is called a \emph{monotone} map if the pre-image of each point in the range under $f$ is connected. Let $\rho_1: G \to \Homeop(S^1)$ and $\rho_2: G \to \Homeop(S^1)$ be faithful group actions on $S^1$. We say that $\rho_1$ is \emph{semi-conjugate} to $\rho_2$ if there exists a monotone map $f: S^1 \to S^1$ such that $f \circ \rho_1(g) = \rho_2(g) \circ f $ for all $g\in G$. If $f$ could be taken to be a homeomorphism, then $\rho_1$ is said to be \emph{conjugate} to $\rho_2$. Note that a semi-conjugacy (or rather a monotone map) gives a map from $\mathcal{M}$ to $\overline{\mathcal{M}}$. For general background on the laminations on $S^1$ and monotone maps, we highly recommend the Chapter 2 of \cite{Calebook}.

A group $G$ is said to act minimally on $S^1$ if all orbits are dense. One immediate consequence of an action being minimal is that the only non-empty closed $G$-invariant subset of $S^1$ is the entire $S^1$.Note that the minimality of an action of a group $G$ is not equivalent to the minimality of a $G$-invariant lamination. 

Some elements of $\Homeop(S^1)$ are particularly interesting to us. 
\begin{defn} For $g \in \Homeop(S^1)$, let $\Fix_g$ be the fixed-point set $\{ x \in S^1 : g(x) = x \}$. An element $g$ of $\Homeop(S^1)$ is said to be; 
\begin{itemize}
 \item an \emph{elliptic element} if $| Fix_g | = 0$,
 \item a \emph{parabolic element} if $| Fix_g | = 1$, 
 \item a \emph{hyperbolic element} if $| Fix_g | = 2$ and one fixed point is attracting and the other one is repelling. 
 \item a \emph{pseudo-Anosov-like element} if there exists $m > 0$ such that $|Fix_{g^m}| = 2n$ for some $n > 1$ and the elements of $\Fix_{g^m}$ alternate between attracting and repelling fixed points along $S^1$.
\end{itemize} 
\end{defn} 

Once a group $G \subset \Homeop(S^1)$ is given, a point $p$ on $S^1$ is called a \emph{cusp point} if $p$ is the fixed point of a parabolic element of $G$. 

Once we consider more than one lamination at the same time, we need some more definitions. 

\begin{defn}Two laminations $\Lambda_1, \Lambda_2$ of $S^1$ are \emph{transverse} if they have no leaf in common, ie., $\Lambda_1 \cap \Lambda_2 = \emptyset$ as subsets of $\mathcal{M}$. They are said to be \emph{strongly transverse} if no leaf of $\Lambda_1$ shares any endpoints with a leaf of $\Lambda_2$, ie., $E(\Lambda_1) \cap E(\Lambda_2) = \emptyset$. \end{defn}

For a collection of very-full laminations each of which is invariant under some group $G$, one can define a notion which lies between pairwise transversality and pairwise strong-transversality. The motivation of the following definition will be explained later

\begin{defn}  Let $G$ be a group acting on $S^1$ faithfully and let $\mathcal{C}= \{\Lambda_\alpha\}_{\alpha \in J}$ be a collection of $G$-invariant very-full laminations, where $J$ is an index set. Then $\mathcal{C}$ is called \emph{pants-like} if the laminations in $\mathcal{C}$ are pairwise transverse, and each point $p \in S^1$ is either fixed by a parabolic element of $G$ or an endpoint of a leaf of at most one lamination $\Lambda_\alpha$. In other words, for $\alpha \neq \beta \in J$, $E(\Lambda_\alpha) \cap E(\Lambda_\beta) = \{ \mbox{cusp points of } G \}$.  
\end{defn}

For a group $G \subset \Homeop(S^1)$, we say $G$ is $\COL_n$ for some $n \in \NN$ if it admits $n$ pairwise transverse dense invariant laminations. We use $\COL_\infty$ to denote the groups which admit an infinite collection of transverse dense invariant laminations.

By definition, we have the inclusions $$\COL_1 \supset \COL_2 \supset \COL_3 \supset \cdots. $$
We say a group is strictly $\COL_n$ if it is $\COL_n$ but not $\COL_{n+1}$. A $\COL_n$ group $G$ is said to be pants-like $\COL_n$ if the collection of $n$ pairwise transverse dense $G$-invariant laminations could be chosen to be pants-like. 

For an abstract group $G$ and an injective homomorphism $\rho: G \to \Homeop(S^1)$, $\rho$ is called $\COL_n$-representation if $\rho(G)$ is a $\COL_n$ group. One aim is to deduce interesting properties of a $\COL_n$ group from the dynamical and geometric data of its invariant laminations. 

 

We will consider the following natural questions.  

\begin{ques} 
\label{ques:coldiff}
Is the set of $\COL_i$ groups strictly bigger than the set of $\COL_{i+1}$ groups for any $i$? Can one characterize those groups in an interesting way? 
\end{ques} 
\begin{ques}
\label{ques:colinf}
Is $\COL_n$ nonempty for all $n$? 
\end{ques} 

We will get partial answers to Question \ref{ques:coldiff} and provide an affirmative answer to Question \ref{ques:colinf}. 
Our main result of the present paper is to show that pants-like $\COL_3$ groups are Fuchsian.


\section{Groups with Specified Number of Invariant Laminations} 
\label{sec:col12inf}
\subsection{Strictly $\COL_1$ groups}

In this section, we construct an example of a strictly $\COL_1$ group.
Let $R$ be a rigid rotation by an irrational angle and pick a point $p \in S^1$. Let $\mathcal{O}_p$ be the orbit of $p$ under the forward and backward iterates of $R$. Then 
it is a countable dense subset of $S^1$. Let $p_i = R^i(p)$, where $R^i$ is the $i$th iterate of $R$. We blow up all points in $\mathcal{O}_p$ and replace them by intervals. More precisely, 
replace $p_j$ by an interval of length $1/2^{|j|}$, and call this interval $I_j$.  Since the sum of the lengths of the $I_j$ is finite, we get again a circle. The action of $R$ on the new circle is the same as 
in the original circle in the complements of the $I_j$ and $R(I_j) = I_{j+1}$ is defined as a unique affine homeomorphism between closed intervals for all $j$. This type of process is called Denjoy blow-up (for instance, see the Construction 2.45 of \cite{Calebook}). We use $\tilde{R}$ to denote the new action obtained from $R$ as above. 

Now consider this circle as $\partial \HH^2$. For each $j$, connect the endpoints of $I_j$ by a geodesic of $\HH^2$. Then we get a lamination, and call it $\Lambda_R$, which is invariant under 
the cyclic group $G_R$ generated by $\tilde{R}$. Let $P_R$ be the unique complementary region of $\Lambda_R$ which does not contain any open arc of $S^1$. 
Then the following lemma holds. 

\begin{lem} 
\label{lem:irrationalrotation} 
 No $G_R$-invariant lamination meets the interior of $P_R$. 
\end{lem}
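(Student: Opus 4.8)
The plan is to argue by contradiction, transporting the configuration back to the rigid rotation $R$ via the semiconjugacy inherent in the Denjoy blow-up and exploiting the minimality of $R$.

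Suppose some $G_R$-invariant lamination $\Lambda$ has a leaf $\ell_0=(a,b)$ meeting the interior of $P_R$. Let $c_j$ denote the leaf of $\Lambda_R$ joining the two endpoints of $I_j$, and let $h\colon S^1\to S^1$ be the monotone degree-one map collapsing each closed interval $I_j$ to the point $p_j$ and restricting to a homeomorphism off $\bigcup_j I_j$; by the very definition of $\tilde R$ one has $h\circ\tilde R=R\circ h$, so $h$ is a semiconjugacy from $G_R$ to $\langle R\rangle$, and $h^{-1}(p_j)=I_j$. The first step is the observation that $a$ and $b$ cannot both lie in a single interval $I_j$: if they did, the chord $\ell_0$ would be contained in the complementary region of $\Lambda_R$ bounded by $c_j$ and the arc $I_j$, hence disjoint from the interior of $P_R$. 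Therefore $u:=h(a)\neq h(b)=:v$.

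Next I would use minimality of $R$. Identify $S^1\cong\RR/\ZZ$ with $R$ equal to translation by an irrational $\alpha$, let $\delta\in(0,1)$ be the counterclockwise arc length from $u$ to $v$, and pick $\epsilon$ with $0<\epsilon<\min(\delta,1-\delta)$. Density of $\{\,k\alpha\bmod 1: k\ge 1\,\}$ in $[0,1)$ provides some $k\ge1$ with $k\alpha\bmod 1\in(0,\epsilon)$; then $R^k u$ lies strictly between $u$ and $v$ on one of the two arcs, $R^k v=R^k u+\delta$ lies strictly between $v$ and $u$ on the other, and the four points $u,\,R^k u,\,v,\,R^k v$ are distinct and occur in this cyclic order, so the chords $(u,v)$ and $(R^k u,R^k v)$ are linked. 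Applying $h$ to the two leaves $\ell_0$ and $\tilde R^{k}\ell_0=(\tilde R^{k}a,\tilde R^{k}b)$ of $\Lambda$ and using $h\circ\tilde R^{k}=R^{k}\circ h$, their images are precisely $(u,v)$ and $(R^k u,R^k v)$; since these four image points are distinct and $h$ is monotone of degree one (hence weakly cyclic-order preserving), the four points $a,\tilde R^{k}a,b,\tilde R^{k}b$ are themselves distinct and appear in the same cyclic order, so $\ell_0$ and $\tilde R^{k}\ell_0$ are linked. This contradicts the fact that $\Lambda$, being a lamination, consists of pairwise unlinked leaves, and the lemma follows.

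The one delicate point is the bookkeeping in the last paragraph: choosing $\epsilon$ so that the cyclic order of $u,R^k u,v,R^k v$ comes out as claimed, and verifying that the collapsing map $h$, though only monotone, does reflect the linking of these particular chords because their $h$-images remain four distinct points. Everything else—the properties of $h$ and the elementary fact that a chord with both ends in one $I_j$ misses the interior of $P_R$—is routine.
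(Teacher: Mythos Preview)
Your argument is correct and follows essentially the same approach as the paper: push the leaf forward through the collapsing semiconjugacy, observe that the image is nondegenerate because the leaf meets the interior of $P_R$, and then use that an irrational rotation links any nondegenerate chord with some iterate of itself. The paper's proof is a brief sketch of exactly this, while you have spelled out the details---in particular the explicit choice of $k$ and the monotonicity argument for lifting the linking back upstairs, which the paper leaves implicit.
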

\begin{proof}
  Let $l$ be a leaf intersecting the interior of $P_R$. The $G_R$-action is semi-conjugate to $R$ via the monotone map $f : S^1 \to S^1$ which collapses each $I_j$, reverting the process of Denjoy blow-up.
  If the orbit closure of $l$ under the $G_R$-action gives a $G_R$-invariant lamination, then so does the orbit closure of $f(l)$ under $R$-action. Since $l$ intersects the interior of $P_R$, $f(l)$ is not degenerate. But $R$ cannot have any invariant lamination, since an irrational rotation maps any pair to a linked pair under some power of $R$, a contradiction. Hence the orbit closure of $l$ under the $G_R$-action cannot be a lamination. This implies that no invariant lamination of $G_R$ has a leaf intersecting the interior of $P_R$. 
\end{proof} 

$\Lambda_R$ is not a dense lamination. We can fix this by putting infinitely many copies of $\Lambda_R$ together in a nice way. 

Pick a leaf $l$ of $\Lambda_R$ and consider a larger group: the maximal orientation-preserving subgroup $G$ of the group $G' = <\tilde{R}, r(l)>$ generated by $\tilde{R}$ and the reflection $r(l)$ along the leaf $l$. Note that $G$ is simply $G' \cap \Homeop(S^1)$. 
 We claim that $G$ is strictly $\COL_1$. The orbit closure of $l$ under the $G'$-action is a dense lamination, call it $\Lambda(R)$. 
 The images of $P_R$ under the elements of $G'$ tesselate the open disk. 

Suppose there exists another $G$-invariant lamination $\hat{\Lambda}$ and let $L$ be a leaf of $\hat{\Lambda}$ which is not contained in $\Lambda(R)$. Then $L$ must intersect the interior of some gap $P$. But the action of $\Stab(P)$ is like the one of $G_R$ by construction where $\Stab(P) = \{\gamma \in G : \gamma(P) = P\}$. By Lemma \ref{lem:irrationalrotation}, the $\Stab(P)$-orbit of $L$ has linked elements so $\hat{\Lambda}$ cannot be a lamination. Hence $\Lambda(R)$ is the only invariant lamination of $G'$.

There are some questions we can ask. If we take a rotation $R'$ by a different irrational angle, are $\Lambda(R)$ topologically conjugate to $\Lambda(R')$? What can we say about the structure of the group $G$? 
It would be very interesting to know what makes the difference between strictly $\COL_1$ and $\COL_2$. 

\subsection{Strictly $\COL_2$ groups}
We shall now construct an example of a strictly $\COL_2$ group. 
Let $S$ be a closed orientable surface with genus $g \ge 2$. Thus it admits $\HH^2$ as its universal cover. Let $\phi: S \to S$ be a pseudo-Anosov homeomorphism from $S$ to itself. 
Since $\HH^2$ is simply connected, $\phi$ lifts to $\tilde{\phi} : \HH^2 \to \HH^2$. Since $\tilde{\phi}$ is a quasi-isometry, it extends continuously to $\partial \HH^2$. The restriction of this extension to the boundary circle gives a homeomorphism $\overline{\phi} : S^1 \to S^1$ where $S^1 = \partial \HH^2$. 
Let $G_\phi$ be the infinite cyclic subgroup of $\Homeop(S^1)$ generated by $\overline{\phi}$. 

It is well known that any pseudo-Anosov homeomorphism of a hyperbolic surface has a pair of transverse invariant laminations, the stable and unstable laminations. One can obtain them as limits of images of a simple closed curve under the forward and backward iterates of the pseudo-Anosov map. Let $\Lambda^{\pm}$ denote those two laminations on $S$ invariant  under $\phi$. Then these laminations lift to laminations $\tilde{\Lambda}^{\pm}$ in $\HH^2$ invariant under $\tilde{\phi}$. Then the endpoints of the leaves of $\tilde{\Lambda}^{\pm}$ form laminations $\overline{\Lambda}^{\pm}$ in $S^1$ invariant under $\overline{\phi}$. 

\begin{lem}
\label{lem:liftinglamdense} 
 $\overline{\Lambda}^{\pm}$ are dense in $S^1$. 
\end{lem}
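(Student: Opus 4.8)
The plan is to derive density of $\overline{\Lambda}^{\pm}$ from the fact that the stable and unstable laminations of a pseudo-Anosov map \emph{fill} the surface. Recall the standard structure theory of pseudo-Anosov homeomorphisms: $\Lambda^{\pm}$ are minimal geodesic laminations on $S$, each of whose complementary regions is a finite-area ideal polygon, and there are only finitely many such regions --- one for each singularity of the corresponding invariant singular foliation, a $k$-pronged singularity giving rise to an ideal $k$-gon with $k \ge 3$. In particular every complementary region $R$ of $\Lambda^{\pm}$ is simply connected and of area at most $\operatorname{area}(S) < \infty$.

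First I would transfer this picture to the universal cover. Since $S \smallsetminus \Lambda^{\pm}$ is the disjoint union of the simply connected regions $R_i$, the complement $\HH^2 \smallsetminus \tilde{\Lambda}^{\pm}$ is exactly the disjoint union of the lifts of the $R_i$, and because each $R_i$ is simply connected the covering $\HH^2 \to S$ restricts to a homeomorphism (indeed an isometry) of each lift onto $R_i$. Hence each complementary region of $\tilde{\Lambda}^{\pm}$ is a convex subset of $\HH^2$ (complementary regions of geodesic laminations are convex, since a geodesic segment joining two points of such a region cannot cross a leaf), of finite area, and bounded by complete geodesics that are leaves of $\tilde{\Lambda}^{\pm}$. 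By Gauss--Bonnet, a finite-area convex region bounded by complete geodesics is an ideal polygon with finitely many ideal vertices; equivalently, $\overline{\Lambda}^{\pm}$ is in fact very-full, and the closure in the closed disk $\DD \cup S^1$ of each complementary region of $\tilde{\Lambda}^{\pm}$ meets $S^1$ in a finite set.

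Finally I would prove density by contradiction. If $E(\tilde{\Lambda}^{\pm})$ were not dense in $S^1$, there would be a nonempty open arc $J \subseteq S^1$ disjoint from $\overline{E(\tilde{\Lambda}^{\pm})}$, so in particular $J$ contains no endpoint of any leaf. Fix $x$ in the interior of $J$ and a small Euclidean ball $B$ about $x$ in $\DD \cup S^1$; after shrinking $B$, any leaf of $\tilde{\Lambda}^{\pm}$ meeting $B \cap \DD$ would necessarily have an endpoint inside $J$ --- impossible --- so $B \cap \DD$ is a connected set disjoint from the closed set $\tilde{\Lambda}^{\pm}$ and therefore lies in a single complementary region $\tilde{R}$. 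But then the closure of $\tilde{R}$ in $\DD \cup S^1$ contains the subarc $\overline{B} \cap S^1$, contradicting the finiteness established above. Hence $E(\tilde{\Lambda}^{\pm}) = E(\overline{\Lambda}^{\pm})$ is dense, so $\overline{\Lambda}^{\pm}$ are dense laminations. I expect the second step to be the main obstacle: one has to be careful that although the boundary leaves of a complementary region of $\Lambda^{\pm}$ may spiral around other leaves on $S$ (so that the region is not literally a polygon down on the surface), its simply connected lift genuinely is a finite-sided ideal polygon, with finiteness of the number of sides coming from the finite area via Gauss--Bonnet.
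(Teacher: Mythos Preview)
Your proof is correct but takes a genuinely different route from the paper's. The paper gives a much more elementary argument based only on cocompactness: given any open arc $I \subset S^1$, let $H \subset \HH^2$ be the half-space it bounds; since $S$ is closed, some $\pi_1(S)$-translate of a fixed fundamental domain lies in $H$, so a fixed leaf of $\Lambda^{\pm}$ has a lift meeting $H$, and that lift must have an endpoint in $I$. This uses nothing about complementary regions and works verbatim for \emph{any} nonempty geodesic lamination on a closed hyperbolic surface; indeed the paper reuses it later for lifts of simple closed geodesics. Your approach instead invokes the structure theory of pseudo-Anosov laminations to show that $\overline{\Lambda}^{\pm}$ is very-full, and then deduces density from that (essentially reproving Lemma~\ref{lem:veryfulllam} along the way). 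What you gain is the stronger conclusion that $\overline{\Lambda}^{\pm}$ is very-full, which the paper does not record at this point; what the paper's argument buys is brevity and wider applicability, since it does not depend on the specific geometry of the complementary regions.
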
 
\begin{proof}
 It suffices to show that the endpoints of the lifts of any leaf of $\Lambda^{\pm}$ are dense in $S^1$. 
 This is obvious from the following easy observation. Let $\gamma$ be any leaf of $\overline{\Lambda}^{\pm}$. For arbitrary geodesic $l$ of $\HH^2$ 
 and for a half-space $H$ bounded by $l$, some fundamental domain of $S$ should intersect $H$. Hence one of the lifts of $\gamma$ intersects $H$ and then one end hits the arc of $\partial_\infty \HH^2$ bounded by the endpoints of $l$, on the same side as $H$. This shows that for arbitrary open interval of $S^1$, some leaf has one endpoint in there. 
\end{proof} 

\begin{prop}
  $G := \pi_1(S) \rtimes <\overline{\phi}>  $-action on $\partial_\infty \HH^2$ is strictly $\COL_2$.
\end{prop}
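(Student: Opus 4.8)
The plan is to show $G = \pi_1(S) \rtimes \langle\overline\phi\rangle$ admits exactly two transverse dense invariant laminations, namely the boundary laminations $\overline\Lambda^{\pm}$ coming from the stable/unstable foliations of $\phi$. The proof splits into three parts: (i) $\overline\Lambda^{\pm}$ are genuinely $G$-invariant laminations, (ii) they are dense and transverse, and (iii) there is no third one.

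For (i), the laminations $\Lambda^{\pm}$ on $S$ are $\phi$-invariant, and trivially $\pi_1(S)$-invariant after lifting to $\HH^2$ (they are deck-translated to themselves since they descend to $S$); equivariance under $\tilde\phi$ is immediate from $\phi$-invariance. Hence $\tilde\Lambda^{\pm}$ is invariant under the full group $\widetilde{G} = \pi_1(S)\rtimes\langle\tilde\phi\rangle$ acting on $\HH^2$, and passing to endpoints on $\partial_\infty\HH^2 = S^1$ gives $\overline\phi$- and $\pi_1(S)$-invariance of $\overline\Lambda^{\pm}$. That these are closed subsets of $\mathcal{M}$ (hence laminations in our sense) follows because $\Lambda^{\pm}$ are closed and the boundary-extension is continuous. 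Transversality, i.e. $\overline\Lambda^+ \cap \overline\Lambda^- = \emptyset$ in $\mathcal{M}$, follows from the fact that the stable and unstable geodesic laminations on $S$ are transverse (they bind the surface and fill it), so no lift of a leaf of $\Lambda^+$ can share both endpoints with a lift of a leaf of $\Lambda^-$; a shared-endpoint configuration is also excluded since the leaves are distinct geodesics that would then be asymptotic, contradicting that the union $\Lambda^+\cup\Lambda^-$ fills $S$. Density is exactly Lemma~\ref{lem:liftinglamdense}.

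The substantive part is (iii): $G$ is not $\COL_3$, in fact admits no dense invariant lamination other than $\overline\Lambda^{\pm}$. Suppose $\hat\Lambda$ is a $G$-invariant lamination; in particular it is $\pi_1(S)$-invariant, so it descends to a geodesic lamination $\hat\lambda$ on the closed surface $S$ (here we use that $\pi_1(S)$ acts cocompactly, so a $\pi_1(S)$-invariant lamination of $\partial_\infty\HH^2$ realizes as a genuine geodesic lamination downstairs). Now $\hat\lambda$ must additionally be invariant under the pseudo-Anosov $\phi$. But a pseudo-Anosov homeomorphism of a closed hyperbolic surface has only finitely many, indeed only two, minimal filling invariant geodesic laminations — its stable and unstable ones — and more relevantly, any $\phi$-invariant compact geodesic lamination on $S$ must be a sublamination related to $\Lambda^{\pm}$: applying $\phi^n$ to any leaf and extracting limits forces the leaf into the closure of the unstable (resp. stable) lamination, and minimality/filling of $\Lambda^{\pm}$ then pins $\hat\lambda$ down. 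One has to be slightly careful that $\hat\lambda$ could a priori have leaves in both $\Lambda^+$ and $\Lambda^-$, but such a $\hat\lambda$ would contain two transverse leaves and hence fail to be a lamination (leaves would cross), so $\hat\lambda \subseteq \Lambda^+$ or $\hat\lambda\subseteq\Lambda^-$; combined with density (which forces $\hat\lambda$ to be all of $\Lambda^{\pm}$, since $\Lambda^{\pm}$ is minimal) we conclude $\hat\Lambda \in \{\overline\Lambda^+, \overline\Lambda^-\}$.

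The main obstacle I expect is making the descent argument in (iii) fully rigorous: precisely, showing that an arbitrary $\pi_1(S)$-invariant closed lamination of $S^1 = \partial_\infty\HH^2$ — not assumed dense or very-full or minimal a priori — corresponds to an honest closed geodesic lamination on $S$, and then that $\phi$-invariance of that geodesic lamination forces it to lie inside $\Lambda^+$ or $\Lambda^-$. The first half is standard (cocompactness plus the Švarc–Milnor/boundary correspondence), but the second half requires the structure theory of pseudo-Anosov maps — that the only $\phi$-invariant geodesic laminations are built from the two fixed ones — which should be cited (e.g. via train tracks, or Casson–Bleiler / Fathi–Laudenbach–Poénaru) rather than reproved. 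Once that input is in hand, density rules out proper sublaminations and the count is exactly two.
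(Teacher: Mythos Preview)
Your overall strategy coincides with the paper's: descend a $G$-invariant lamination to $S$ via $\pi_1(S)$-invariance, then exploit $\phi$-invariance and the structure theory of pseudo-Anosov maps. The gap is in the direction of the inclusion in part~(iii). Iterating a leaf $\ell$ of $\hat\lambda$ under $\phi^n$ and extracting limits shows that those \emph{limits} lie in $\hat\lambda$ (since $\hat\lambda$ is closed and $\phi$-invariant) and accumulate on $\Lambda^+$; this yields $\Lambda^+ \subseteq \hat\lambda$, not $\hat\lambda \subseteq \Lambda^+$. Your stated conclusion $\hat\lambda \subseteq \Lambda^+$ or $\hat\lambda \subseteq \Lambda^-$, hence $\hat\Lambda \in \{\overline\Lambda^+, \overline\Lambda^-\}$, is in general false: when $\Lambda^+$ has a complementary ideal $k$-gon with $k \ge 4$, one can sometimes adjoin a $\phi$-invariant family of diagonals and obtain a strictly larger $\phi$-invariant lamination. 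So the sentence ``such a $\hat\lambda$ would contain two transverse leaves and hence fail to be a lamination'' is addressing the wrong scenario.

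The correct (and sufficient) statement is the one the paper uses: any nonempty $\phi$-invariant geodesic lamination on $S$ contains $\Lambda^+$ or $\Lambda^-$ as a sublamination. (Argue via minimal components: $\phi$ permutes them, a power of $\phi$ fixes each, no component can be a simple closed geodesic since $\phi$ is pseudo-Anosov, and $\Lambda^{\pm}$ are the only minimal $\phi$-invariant laminations; since $\Lambda^+$ and $\Lambda^-$ cross, only one can appear.) Consequently every $G$-invariant lamination on $S^1$ contains $\overline\Lambda^+$ or $\overline\Lambda^-$. A pigeonhole step then finishes: among any three $G$-invariant laminations, two contain the same $\overline\Lambda^{\pm}$ and hence are not transverse, so $G$ is not $\COL_3$. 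You do not need, and should not claim, that $\overline\Lambda^{\pm}$ are the \emph{only} dense $G$-invariant laminations.
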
 
\begin{proof}  
Let $\Lambda$ be an invariant lamination under $G$. Then, it projects down to a lamination on $S$ which is invariant under $\phi$. 
However, $\Lambda^+$ or $\Lambda^-$ are minimal and filling (meaning that every simple closed curve on $S$ intersects the lamination). Hence, the projected lamination on $S$ contains either $\Lambda^+$ or $\Lambda^-$ as a sub-lamination. In particular, $\Lambda$ cannot be transverse to both $\overline{\Lambda}^{+}$ and $\overline{\Lambda}^{-}$. 
\end{proof} 

We just saw that one can produce a large family of examples of strictly $\COL_2$ groups via pseudo-Anosov surface homeomorphisms. 


Note that we also saw that any group containing irrational rotations is an example of a strictly $\COL_0$ group. 
The results of this section prove the following proposition. 

\begin{prop}
\label{thm:strictlycols2n}
 $\CO \supsetneq \COL_1 \supsetneq \COL_2 \supsetneq \COL_3$.
\end{prop}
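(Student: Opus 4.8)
The plan is to prove Proposition~\ref{thm:strictlycols2n} by assembling the pieces already established in this section together with the containments recorded earlier. The non-strict inclusions $\CO \supset \COL_1 \supset \COL_2 \supset \COL_3$ are immediate from the definitions, so the entire content is to exhibit, for each of the three places, a group realizing strictness. I would organize the proof around three explicit witnesses and a bookkeeping remark about faithfulness.

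First I would handle $\CO \supsetneq \COL_1$. Take any circularly orderable group that contains an element acting as an irrational rotation $R$ (for instance the cyclic group $\langle R\rangle$ itself). Such a group is $\CO$ by hypothesis, but it is not $\COL_1$: as noted in the proof of Lemma~\ref{lem:irrationalrotation}, an irrational rotation carries some unlinked pair to a linked pair under a suitable power, so no nonempty $R$-invariant lamination can exist, and hence no invariant lamination at all for a group containing $R$. This gives a strictly $\COL_0$ group, which in particular certifies $\CO \neq \COL_1$. Next, for $\COL_1 \supsetneq \COL_2$, I would invoke the strictly $\COL_1$ group $G$ constructed in Section~3.1 (the maximal orientation-preserving subgroup of $\langle \tilde R, r(l)\rangle$): it was shown there that $\Lambda(R)$ is its unique invariant lamination, so it lies in $\COL_1 \setminus \COL_2$. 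Finally, for $\COL_2 \supsetneq \COL_3$, I would use the strictly $\COL_2$ group $G = \pi_1(S) \rtimes \langle \overline{\phi}\rangle$ of the previous Proposition: the pair $\{\overline{\Lambda}^+, \overline{\Lambda}^-\}$ of dense laminations (dense by Lemma~\ref{lem:liftinglamdense}, transverse since the stable and unstable laminations of a pseudo-Anosov are transverse) shows $G \in \COL_2$, while the minimality-and-filling argument in that proof shows no invariant lamination can be transverse to both, so a third pairwise-transverse invariant lamination is impossible and $G \notin \COL_3$.

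I would then add one sentence of cleanup to make sure each witness is a genuine subgroup of $\Homeop(S^1)$ and not merely an abstract group: for the irrational-rotation example this is built in; for the Section~3.1 group we pass to $G' \cap \Homeop(S^1)$; and for the surface example the action on $\partial_\infty\HH^2$ is faithful because $\pi_1(S)$ acts faithfully on its Gromov boundary and the mapping-torus extension remains faithful. Chaining the three strict inclusions then yields the displayed chain.

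The main obstacle is not any single hard estimate but rather making sure the three "strictly $\COL_n$" claims from the preceding subsections are cited at exactly the strength needed here: in particular that the Section~3.1 example truly has \emph{no} second invariant lamination (which the text argues via Lemma~\ref{lem:irrationalrotation} applied to $\Stab(P)$), and that the pseudo-Anosov example genuinely fails $\COL_3$ rather than merely failing to have three \emph{obvious} laminations. Since both of those facts are already proven above, the proof here is essentially a synthesis, and I expect it to be short.
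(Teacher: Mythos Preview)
Your proposal is correct and matches the paper's approach exactly: the paper does not write out a separate proof but simply states that the results of this section prove the proposition, and your synthesis of the three witnesses (irrational rotation for $\CO\setminus\COL_1$, the Section~3.1 Denjoy-blow-up group for $\COL_1\setminus\COL_2$, and the pseudo-Anosov mapping torus group for $\COL_2\setminus\COL_3$) is precisely that assembly. Your additional cleanup remarks about faithfulness are fine but not strictly needed beyond what the earlier subsections already establish.
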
 


\subsection{$\COL_\infty$ groups}

We have seen some examples of groups which have a very small number of invariant laminations. In the other extreme, there are groups which admit infinitely many invariant laminations. 

First we show 
\begin{prop}
\label{prop:colnnonempty}
 $\COL_\infty$ is nonempty.
\end{prop}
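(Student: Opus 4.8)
The plan is to exhibit a single group $G \subset \Homeop(S^1)$ that carries infinitely many pairwise transverse dense invariant laminations. The natural candidate is a Fuchsian group, since the Main Theorem tells us Fuchsian groups carry at least three very-full invariant laminations, and in fact they should carry far more. Concretely, I would take $G = \PR$ itself, or a lattice such as $\pi_1(S)$ for a closed hyperbolic surface $S$, acting on $S^1 = \partial_\infty \HH^2$. The key observation is that a geodesic lamination of $\HH^2$ which is invariant under $G$ gives a $G$-invariant lamination of $S^1$, and one can produce infinitely many genuinely different such laminations.

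First I would fix a reference lamination $\Lambda_0$: for instance, pull back the Farey-type lamination via an identification, or better, start from a single complete geodesic lamination $\mu$ on the surface $S$ (there are uncountably many of these, e.g.\ obtained by completing a finite train track) and lift it to $\HH^2$ to get a $\pi_1(S)$-invariant geodesic lamination $\tilde{\mu}$; its endpoint set is a $\pi_1(S)$-invariant lamination of $S^1$. Since $\pi_1(S)$ acts minimally on $S^1$, the orbit of any leaf is dense, so each such lamination is automatically dense. Then I would vary $\mu$ over an infinite family $\{\mu_n\}_{n \in \NN}$ of complete geodesic laminations on $S$ chosen so that no two share a leaf and no two have leaves sharing an endpoint in $\HH^2$ — this is easy to arrange by picking laminations supported on disjoint subsurfaces, or by a genericity/transversality argument, since the set of leaves of any fixed lamination is a measure-zero subset of the space of geodesics. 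This guarantees that $\{\overline{\Lambda}_n\}$ are pairwise transverse (indeed strongly transverse).

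The steps in order: (1) recall that $\PR$ (hence any discrete subgroup) acts on $S^1 = \partial\HH^2$ and that a $G$-invariant geodesic lamination of $\HH^2$ descends to a $G$-invariant lamination of $S^1$; (2) construct an infinite family of distinct complete geodesic laminations on a fixed closed hyperbolic surface $S$, with pairwise disjoint leaf sets having no common endpoints, e.g.\ by supporting the $n$-th lamination on an essential subsurface $W_n$ in a chain of nested or disjoint subsurfaces, or simply by a dimension count; (3) lift each to $\HH^2$ and take endpoints to obtain laminations $\overline{\Lambda}_n$ of $S^1$, each invariant under $G = \pi_1(S)$; (4) verify density using minimality of the $\pi_1(S)$-action on $S^1$ (this is exactly the argument of Lemma~\ref{lem:liftinglamdense}, applied to the surface group action rather than a cyclic one); (5) verify pairwise transversality from the construction in step (2). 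Conclude that $\pi_1(S)$ is $\COL_\infty$, hence $\COL_\infty \neq \emptyset$.

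The main obstacle is step (2): ensuring one genuinely has \emph{infinitely many} laminations that are pairwise transverse — not merely distinct, but with disjoint leaf sets. The cleanest route is to use a pants decomposition or a chain of disjoint essential subsurfaces $W_1, W_2, \dots$ of $S$ and place on each $W_n$ a filling lamination of $W_n$, then complete it to a lamination of $S$ by adding the (finitely many) boundary leaves; distinctness and transversality are then immediate because the supports differ, though one must be slightly careful that the added completing leaves of different $\overline{\Lambda}_n$ can be chosen not to coincide and not to share endpoints, which again follows from a genericity argument. An alternative that sidesteps subsurfaces entirely: it is classical that there is a continuum of distinct complete geodesic laminations on $S$, and the set of pairs that fail strong transversality is ``thin'', so a transfinite/greedy selection yields a countably infinite strongly transverse family. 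Either way the density is free from minimality, so that half of the argument is routine.
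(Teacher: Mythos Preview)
Your proposal is correct in outline, but it takes a considerably more elaborate route than the paper and aims for more than is required. The paper's argument is one short paragraph: take a closed hyperbolic surface $S$, choose infinitely many pairwise non-homotopic simple closed curves, replace each by the geodesic in its homotopy class, and lift each such geodesic to $\HH^2$. The full preimage of a simple closed geodesic is already a $\pi_1(S)$-invariant geodesic lamination, density follows exactly as in Lemma~\ref{lem:liftinglamdense}, and transversality is immediate because non-homotopic simple closed geodesics have no common lift---so the laminations share no leaf. That is all that ``transverse'' means here (no common leaf in $\mathcal{M}$), and that is all $\COL_\infty$ asks for.

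By contrast, you work with complete geodesic laminations and try to arrange \emph{strong} transversality (no shared endpoints), which is strictly stronger than needed and is precisely what generates your ``main obstacle'' in step~(2). Your subsurface idea also needs care: a complete lamination on a closed surface is filling, so it cannot literally be supported on a proper subsurface, and once you start completing partial laminations by adding boundary leaves you have to chase possible coincidences, as you note. None of this is wrong, but none of it is necessary either. The paper's choice of simple closed geodesics makes the transversality step a tautology and avoids all genericity or dimension-count arguments. (A minor side remark: $\PR$ itself is not a valid candidate for $G$, since the paper assumes groups are countable; your main choice $\pi_1(S)$ is fine.)
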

\begin{proof} 
Let $S$ be a closed hyperbolic surface. One can find infinitely many non-homotopic simple closed curves. In the homotopy class of each simple closed curve, there exists a unique simple closed geodesic. Identify the universal cover of $S$ with $\HH^2$. The lift of a simple closed geodesic becomes a geodesic lamination which is invariant under the action of $\pi_1(S)$. By the same argument as in the proof of Lemma \ref{lem:liftinglamdense}, its endpoints form a dense subset of $S^1$. Now we found an infinite family of dense invariant laminations of $S^1$ where the action of $\pi_1(S)$ is the restriction of the natural extension of the deck transformation action on $\partial_\infty \HH^2$. By  construction, they are obviously transverse to each other. 
\end{proof} 


As we saw in the proof of above proposition, a surface group admits an infinite collection of transverse dense invariant laminations. Hence they lie in $\COL_\infty $.
There are two natural questions to ask. 
\begin{ques} 
\label{ques:colinfty}
Is $\COL_\infty$ same as $\cap_n \COL_n$? 
\end{ques} 
\begin{ques} 
\label{ques:infinitelamsurfacegroup}
Are there examples of $\COL_\infty$ other than surface groups? 
\end{ques}

For  Question \ref{ques:colinfty}, if we add a condition to the definition of $\COL_n$ that the $n$ transverse dense invariant laminations are minimal, then the answer is YES. 

\begin{prop} 
Let $G$ be $\COL_n$ with minimal laminations for all $n$. Then $G$ is $\COL_\infty$. 
\end{prop}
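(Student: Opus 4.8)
The plan is to show that the (countably many) distinct minimal laminations guaranteed by the hypotheses can be assembled into a single infinite pairwise-transverse family. First I would observe that being $\COL_n$ with minimal laminations means for every $n$ there is a set of $n$ pairwise transverse minimal dense $G$-invariant laminations; the key structural point about minimal laminations is rigidity: a minimal lamination is the orbit closure of any one of its leaves, so two minimal $G$-invariant laminations are either equal or transverse (if they shared a leaf $l$, each would be the orbit closure of $l$, hence equal). Thus "pairwise transverse" for a family of minimal laminations is equivalent to "pairwise distinct." So the whole problem reduces to producing infinitely many \emph{distinct} minimal dense $G$-invariant laminations.

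Next I would argue that the set $\mathcal{L}_{\min}$ of all minimal dense $G$-invariant laminations is infinite. Suppose not, say $\mathcal{L}_{\min} = \{\Lambda_1,\dots,\Lambda_k\}$ has exactly $k$ elements. Being $\COL_{k+1}$ with minimal laminations gives $k+1$ pairwise transverse — hence, by the previous paragraph, pairwise distinct — minimal dense $G$-invariant laminations, contradicting $|\mathcal{L}_{\min}| = k$. Hence $\mathcal{L}_{\min}$ is infinite; enumerate distinct elements $\Lambda_1, \Lambda_2, \Lambda_3, \dots$. By minimality and distinctness these are automatically pairwise transverse, and each is dense by hypothesis, so $\{\Lambda_i\}_{i\in\NN}$ witnesses that $G$ is $\COL_\infty$.

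The step I expect to be the genuine (if small) obstacle is the rigidity claim that two minimal $G$-invariant laminations sharing a leaf must coincide: one must be careful that "orbit closure of a leaf" means closure in $\mathcal{M}$ of the $G$-orbit, and that this closure is itself a lamination — i.e. a set of pairwise unlinked pairs. This is where $G$-invariance of each $\Lambda_i$ is used: the orbit of $l$ inside $\Lambda_i$ stays unlinked because $\Lambda_i$ is a lamination, and taking closures preserves unlinkedness since the linked condition is open in $\mathcal{M}\times\mathcal{M}$. Granting that, the orbit closure of $l$ lies in both $\Lambda_i$ and $\Lambda_j$, and minimality of each forces both to equal it. Everything else is a routine pigeonhole argument, so I would keep the write-up short and spend the few available lines making the rigidity observation precise.
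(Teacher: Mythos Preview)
Your proof is correct and uses essentially the same idea as the paper: both arguments hinge on the observation that a minimal $G$-invariant lamination equals the orbit closure of any of its leaves, so distinct minimal laminations are automatically transverse, and then a pigeonhole step against the $\COL_{n+1}$ hypothesis produces arbitrarily many distinct ones. The paper phrases this as an inductive extension (given $n$ laminations, find an $(n{+}1)$st) while you phrase it as a contradiction on $|\mathcal{L}_{\min}|$, but these are the same argument; your worry about whether the orbit closure is itself a lamination is unnecessary, since minimality already forces that orbit closure to equal each $\Lambda_i$ containing $l$, and those are laminations by hypothesis.
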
 
\begin{proof}
 Pick $n$ arbitrary. Then $G$ admits $n$ transverse dense minimal invariant laminations $\Lambda_1, \ldots, \Lambda_n$. We will show that there exists a minimal dense $G$-invariant lamination $\Lambda_{n+1}$ which is transverse to each $\Lambda_i$ for $i = 1, \ldots, n$. 
 
 Note that a minimal $G$-invariant lamination $\Lambda$ is simply the orbit closure of an arbitrary leaf of $\Lambda$ under the $G$-action. Hence, any two different minimal $G$-invariant laminations are transverse to each other. Since $G$ is $\COL_{n+1}$, there must a minimal dense invariant lamination which is different from $\Lambda_1, \ldots, \Lambda_n$. Thus, there exists a pair $l = (a,b)$ of points of $S^1$ which is not a leaf of any $\Lambda_i$ for $i = 1, 2, \ldots, n$, and the orbit closure of $l$ forms a minimal dense invariant lamination transverse to $\Lambda_1, \ldots, \Lambda_n$. This new lamination can be taken as $\Lambda_{n+1}$.  
 
 What we proved is that for any existing collection of pairwise minimal dense invariant laminations of $G$, we can add an extra minimal dense $G$-invariant lamination so that the new collection is still pairwise transverse. One obtains an infinite collection of pairwise transverse minimal dense $G$-invariant laminations by performing this process infinitely many times. 
 \end{proof} 
 
 In the proof, we need the minimality of the laminations in order to add a lamination to an existing collection. We suspect that Question \ref{ques:colinfty} has an affirmative answer in general, but could not prove it without the minimality assumption. 
 
 For Question \ref{ques:infinitelamsurfacegroup}, the answer is still YES. One can construct an example using Denjoy blow-up. In the subsequent sections, however, we will see that the situation is very different as long as one requires the invariant laminations to be very-full. 

\section{Laminations on the Hyperbolic Surfaces}
\label{sec:surfacelaminations}
In this section, we will study the laminations on hyperbolic surfaces. 


\begin{defn} A surface $S$ admitting a complete hyperbolic metric is called \textbf{pants-decomposable} if there exists a non-empty multi-curve $X$ on $S$ consisting of simple closed geodesics so that the closure of each connected component of the complement of $X$ is a pair of pants. The fundamental group of some pants-decomposable surface is called a pants-decomposable surface group. The multi-curve $X$ used in the pants-decomposition of $S$ will be called a \textbf{pants-curve}. 
\end{defn}

Note that all hyperbolic surfaces of finite area except the thrice-puncture sphere are pants-decomposable. The thrice-puncture sphere is excluded by the definition, since we required the existence of a `non-empty' pants-curve. For a hyperbolic surface with infinite area, we still have a similar decomposition but some component of complement of the closure of a multi-curve could be a half-annulus or a half-plane.  For the precise statement, we refer to Theorem 3.6.2. of \cite{Hubb}. 

\begin{lem}
\label{lem:pseudoanosov}
Let $S$ be a hyperbolic surface of finite area which is not the thrice punctured sphere. For any pseudo-Anosov homeomorphism $f$ of $S$ and two arbitrary finite sets of simple closed curves $F_1, F_2$, there exists a large enough $n$ such that no curve in $F_1$ is homotopic to a curve in $f^n(F_2)$, where $f^n$ is the $n$th iterate of $f$. 
\end{lem}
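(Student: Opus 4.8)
The plan is to combine the finiteness of the set of curves with the North–South dynamics of a pseudo-Anosov map on the space of projective measured laminations (or, more elementarily, on the set of isotopy classes of simple closed curves). First I would reduce to the case where $F_1$ and $F_2$ each consist of a single simple closed curve: if the conclusion holds for every pair $(\gamma_1, \gamma_2)$ with $\gamma_i \in F_i$, then taking $n$ larger than the finitely many thresholds $n(\gamma_1,\gamma_2)$ gives a uniform $n$ that works for the whole pair of finite sets. So it suffices to show that for any two simple closed curves $\gamma_1, \gamma_2$ on $S$, $f^n(\gamma_2)$ is not homotopic to $\gamma_1$ once $n$ is large.

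Next I would invoke the standard fact that $f$ has exactly two fixed points on the space $\mathcal{PML}(S)$ of projective measured laminations, namely the stable and unstable laminations $[\Lambda^{\pm}]$, and that $f$ acts on $\mathcal{PML}(S)$ with source–sink dynamics: for any $\mu \neq [\Lambda^-]$, $f^n(\mu) \to [\Lambda^+]$ as $n \to \infty$. Since $S$ is a finite-area hyperbolic surface other than the thrice-punctured sphere, $\mathcal{PML}(S)$ is an infinite space (a sphere of dimension $\geq 1$), and in particular the points $[\Lambda^{\pm}]$ are not isolated and, crucially, $[\Lambda^+]$ is not the class of a simple closed curve — the stable lamination of a pseudo-Anosov is minimal and filling, hence carries no closed leaf, so it is not (projectively) a weighted simple closed curve. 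Applying this to $\mu = [\gamma_2]$ (which is $\neq [\Lambda^-]$, again because $\gamma_2$ is a simple closed curve and $\Lambda^-$ is not), we get $f^n([\gamma_2]) \to [\Lambda^+]$. If $f^n([\gamma_2]) = [\gamma_1]$ for infinitely many $n$, then the constant sequence $[\gamma_1]$ would have to converge to $[\Lambda^+]$, forcing $[\gamma_1] = [\Lambda^+]$, which contradicts $[\Lambda^+]$ not being a simple closed curve class. Hence $f^n(\gamma_2) \not\simeq \gamma_1$ for all large $n$.

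An alternative, more hands-on route that avoids $\mathcal{PML}(S)$ entirely: use the fact that geometric intersection numbers grow like the dilatation, $i(f^n(\gamma_2), \delta) \sim \lambda^n$ for any fixed curve $\delta$ that intersects the stable and unstable laminations essentially (e.g. $\delta$ filling together with $\gamma_2$, or one can take two such curves to cover all cases). Since $i(\gamma_1, \delta)$ is a fixed finite number, the equality $f^n(\gamma_2) \simeq \gamma_1$ can hold for at most finitely many $n$; taking $n$ beyond all of these (over the finitely many pairs in $F_1 \times F_2$, and using a fixed $\delta$ or a small finite collection of $\delta$'s that jointly fill) finishes the argument.

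The main obstacle I anticipate is purely a matter of bookkeeping rather than a genuine mathematical difficulty: making sure the chosen auxiliary curve $\delta$ (or the appeal to source–sink dynamics) genuinely applies to \emph{every} $\gamma_2 \in F_2$ simultaneously, including curves $\gamma_2$ that might a priori be disjoint from a poorly chosen $\delta$. This is handled by choosing $\delta$ to fill $S$, so that $i(\gamma_2,\delta) > 0$ for all essential $\gamma_2$, and then invoking the uniform exponential growth of $i(f^n\gamma_2, \delta)$; the thrice-punctured-sphere exclusion is exactly what guarantees such a filling curve (equivalently, a pseudo-Anosov map) exists and that $\mathcal{PML}(S)$ is nontrivial. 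Everything else is a finite maximum over the finitely many pairs in $F_1 \times F_2$.
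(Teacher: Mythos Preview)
Your proposal is correct. Both the source--sink argument on $\mathcal{PML}(S)$ and the intersection-number growth argument are valid, and your reduction to single pairs $(\gamma_1,\gamma_2)$ followed by a finite maximum is exactly right.

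The paper's proof, however, is a single sentence: it simply invokes the standard fact that a pseudo-Anosov map has no reducible power. The logic there is that if $f^{n_1}(\gamma_2)\simeq\gamma_1$ and $f^{n_2}(\gamma_2)\simeq\gamma_1$ for some $n_1<n_2$, then $f^{n_2-n_1}$ fixes the isotopy class of $\gamma_1$, so $f^{n_2-n_1}$ is reducible, a contradiction; hence each pair $(\gamma_1,\gamma_2)$ has at most one bad $n$, and a finite maximum finishes. Your argument is not really a different route so much as an unpacking of \emph{why} pseudo-Anosov maps have no reducible power (the source--sink dynamics on $\mathcal{PML}(S)$ is the usual proof of that fact). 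What you gain is self-containment and an explicit quantitative mechanism (exponential growth of intersection numbers); what the paper gains is brevity by citing the Nielsen--Thurston classification as a black box.
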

\begin{proof} This is an immediate consequence of the fact that a pseudo-Anosov map has no reducible power. 
\end{proof} 

\begin{prop} 
\label{prop:infinitepantscurves}
Let $S$ be a pants-decomposable surface. Then there exists pants-curves $X_0, X_1, X_2$ so that no curve in $X_i$ is homotopic to a curve in $X_j$ for all $i \neq j$. 
\end{prop}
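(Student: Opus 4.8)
The plan is to fix any pants-curve $X_0$ of $S$ (one exists by the definition of pants-decomposable) and to construct $X_1$ and $X_2$ from it. The main tool is a pseudo-Anosov homeomorphism of $S$ together with Lemma~\ref{lem:pseudoanosov}; this disposes of every case in which $S$ has finite topological type, while the surfaces of infinite topological type must be handled separately, and that is where the one real difficulty lies.

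Assume first that $S$ has finite topological type. Since $S$ is pants-decomposable it is none of the small surfaces (disk, annulus, pair of pants, thrice-punctured sphere), so its mapping class group contains pseudo-Anosov elements; and although Lemma~\ref{lem:pseudoanosov} is stated for finite area, its proof uses nothing about the metric beyond the fact that a pseudo-Anosov map has no periodic essential simple closed curve, so it applies to our $S$ even when the complete hyperbolic metric has funnels. Fix a pseudo-Anosov homeomorphism $f$ of $S$. For every $k$ the multicurve $f^{k}(X_0)$ is essential, non-peripheral, and has only pair-of-pants complementary regions, so, replacing its components by their geodesic representatives, it is again a pants-curve. Since $f$ has no periodic essential simple closed curve, the set of integers $n$ for which no curve of $X_0$ is homotopic to a curve of $f^{n}(X_0)$ is cofinite (this is Lemma~\ref{lem:pseudoanosov} with $F_1=F_2=X_0$ together with its proof); choose such an $n$ and set $X_1:=f^{n}(X_0)$ and $X_2:=f^{2n}(X_0)$. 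For the pairs $(X_0,X_1)$ and $(X_0,X_2)$ the required condition holds by the choice of $n$, respectively of $2n$; for the pair $(X_1,X_2)$, apply the homeomorphism $f^{-n}$, which preserves homotopy classes of curves and carries the pair to $(X_0,f^{n}(X_0))$, reducing it to the case already treated. Hence $\{X_0,X_1,X_2\}$ is the desired triple.

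Finally assume $S$ has infinite topological type, so $X_0$ is an infinite multicurve and no pseudo-Anosov homeomorphism is available; I would then build $X_1$ and $X_2$ by infinite sequences of elementary moves. For a curve $c$ of any multicurve $\Lambda$ all of whose complementary regions are pairs of pants, the union of the complementary pairs of pants incident to $c$ is a one-holed torus or a four-holed sphere $Y_c$ in which $c$ is the only curve of $\Lambda$; replacing $c$ by a curve $c'$ contained in $Y_c$ with geometric intersection number $i(c,c')\in\{1,2\}$ leaves every other curve of $\Lambda$ untouched, keeps all complementary regions pairs of pants, and --- since $c'$ is essential and non-peripheral in $Y_c$ and crosses $c$ --- produces a curve homotopic to no curve of $\Lambda$. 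Enumerating the curves of $X_0$ so that they spread outward along the graph dual to the pants-decomposition, and performing such a move on each in turn with respect to the current multicurve, should yield a limiting pants-curve $X_1$ in which every curve crosses the curve of $X_0$ it replaced, so that $X_1$ shares no homotopy class with $X_0$; a second, independent run, with the replacement curves chosen so that $X_2$ drifts away from both $X_0$ and $X_1$ in the relevant curve complexes, gives $X_2$. The main obstacle is exactly this last case: one must check that the infinitely many elementary moves stay mutually compatible at every stage, that the limiting object is genuinely a pants-curve rather than a multicurve with a non-pants complementary region, and that $X_0,X_1,X_2$ do end up pairwise homotopically disjoint --- all of which requires some care with the combinatorics of the dual graph --- whereas in the finite-type case everything follows immediately from Lemma~\ref{lem:pseudoanosov}.
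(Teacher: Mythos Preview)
Your finite-type argument is correct and matches the paper's approach (the paper takes $f^{n_1}(X_0)$ and $f^{n_2}(X_0)$ for suitable $n_1,n_2$; your choice of $f^n(X_0)$ and $f^{2n}(X_0)$ together with the $f^{-n}$-trick for the pair $(X_1,X_2)$ is a clean variant). Note incidentally that a pants-decomposable surface of finite topological type automatically has finite area --- it is a finite union of hyperbolic pairs of pants --- so your remark about extending Lemma~\ref{lem:pseudoanosov} to surfaces with funnels is not needed.

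The infinite-type case, however, is not a proof but a programme, and you say as much. The obstacles you list are real: an infinite sequence of elementary moves need not stabilise to a pants decomposition (each flip alters the two neighbouring pairs of pants, so subsequent flips are performed in a moving landscape, and arranging that every compact subsurface is touched only finitely many times requires an ordering you have not specified), and even granting convergence, producing an $X_2$ that simultaneously avoids $X_0$ and the already-constructed $X_1$ is a genuine combinatorial problem you have not solved. As it stands this is a gap.

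The paper closes this gap by a quite different device. It chooses a sparse sub-multicurve $B\subset X_0$ (no two curves of $B$ bound a common pair of pants) such that every component $S_i$ of $S\setminus B$ is a \emph{finite} union of pairs of pants. On each $S_i$ one applies a pseudo-Anosov $f_i$ rel $\partial S_i$ exactly as in the easy case, obtaining $X_1=B\cup\bigcup_i f_i^{n_i}(X_0\cap S_i)$ and $X_2=B\cup\bigcup_i f_i^{m_i}(X_0\cap S_i)$. At this point the three pants-curves agree only along $B$; the paper removes this residual overlap by multi-twisting along an auxiliary multicurve $Y=\bigcup_{\gamma\in B}\delta(\gamma)$ that meets every curve of $B$: setting $X_1':=D(X_1)$, each curve of $X_1'$ is either unchanged (hence already disjoint from $X_0$) or now has positive intersection with $B\subset X_0$; a further twist with higher powers yields $X_2'$ distinguishable from both $X_0$ and $X_1'$ by intersection numbers with $B$. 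This reduces the infinite problem to independent finite-type problems plus one explicit global twist, so no limiting argument is required.
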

\begin{proof} 
Note that this claim is clear if $S$ is of finite area. We take an arbitrary pants-curve $X_0$ and a pseudo-Anosov map $f: S \to S$. Then by Lemma \ref{lem:pseudoanosov}, there exist large enough positive integers $n_1, n_2$ such that $X_0, X_1 = f^{n_1}(X_0), X_2 = f^{n_2}(X_0)$ are such pants-curves.  But we have no well-undestood notion of pseudo-Anosov map for an arbitrary surface of infinite area. 

Let $S$ be a pants-decomposable surface of infinite area. 

First we take an arbitrary pants-curve $X_0$. Seeing $X_0$ as some set of simple closed geodesics, choose a subset $B$ of $X_0$ such that no two curves in $B$ are boundary components of a single pair of pants, and each connected component of $S \setminus B$ is a finite union of pairs of pants, ie., of finite area. Let $(S_i)_{i \in \NN}$ be the enumeration of the connected components of $S \setminus B$. For each $i$, choose a pseudo-Anosov map $f_i$ on $S_i$ rel $\partial S_i$. By Lemma  \ref{lem:pseudoanosov}, there exists $n_i, m_i \in \NN$ so that $X_0 \cap S_i, f^{n_i}(X_0 \cap S_i), f^{m_i}(X_0 \cap S_i)$ are desired pants-curves on $S_i$.  
 
 Let $X_1 := B \cup (\cup_{i \in \NN} f_i^{n_i}(X_0 \cap S_i)), X_2 := B \cup (\cup_{i \in \NN} f_i^{m_i}(X_0 \cap S_i))$. We are not quite done yet, since all $X_0, X_1, X_2$ contain $B$. For each curve $\gamma$ in $B$, we choose a simple closed curve $\delta(\gamma)$ as in Figure \ref{fig:coloringonpantscurve}. They show three different possibilities for $\gamma$ as red, blue, and green curves, and in each case, $\delta(\gamma)$ is drawn as the curve colored in magenta. By definition of $B$, $\delta(\gamma)$ is disjoint from $\delta(\gamma')$ for $\gamma \neq \gamma' \in B$. Let $D$ be the positive multi-twist along the multi-curve $Y = \cup_{\gamma \in B} \delta(\gamma)$. Let $X_1' = D(X_1)$. A curve in $X_1$ which had zero geometric intersection number with $Y$ remains unchanged, and clearly it has no homotopic curves in $X_0$. A curve in $X_1$ which had non-zero intersection number with $Y$ now has positive geometric intersection number with $B$. Since no curve in $X_0$ has positive geometric intersection number with $B$, we are done for this case too. 
 
 Changing $X_2$ is a bit trickier. Let $(b_i)_{i \in \NN}$ be an enumeration of the curves in $B$. Let $D_i$ be the positive Dehn twist along $\delta(b_i)$. One can take $k_i$ for each $i$ so that each curve in $D_i^{k_i}(X_2)$ either is the image of a curve in $X_2$ which has zero geometric intersection number with $\delta(b_i)$ (so remains unchanged) or has positive geometric number with $b_i$ which is strictly larger than $2$. Since $\delta(b_i)$ are disjoint, the infinite product $D' := \prod_{i \in \NN} D_i^{k_i}$ is well-defined. Define $X_2'$ as $X_2' = D'(X_2)$. Note that the geometric intersection number between a curve $\gamma$ in $X_1'$ and $b_i$ for some $i$ is at most $2$.  Now it is clear that $X_0, X_1', X_2'$ are desired pants-curves. 
\end{proof} 

\begin{figure}	
	\centering
	\begin{subfigure}[t]{1.7in}
		\centering
		\includegraphics[scale=0.4]{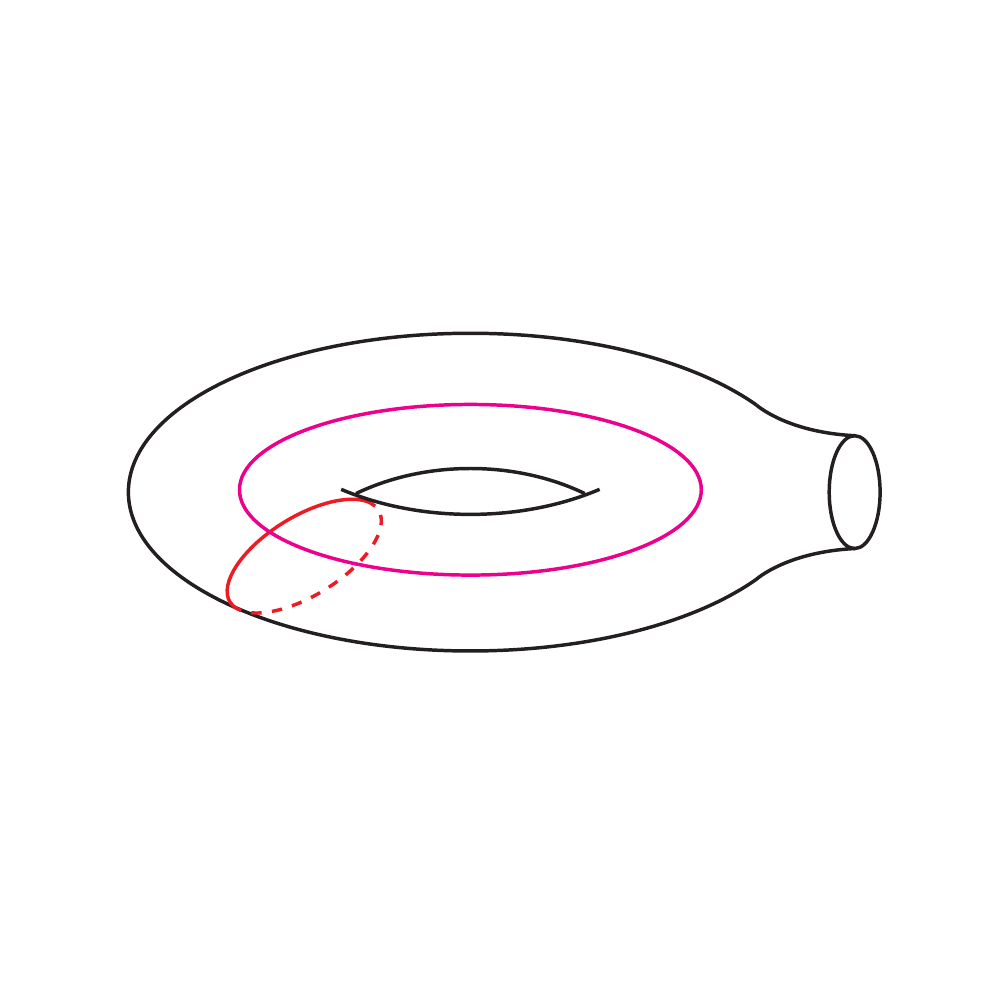}
		\caption{Boundary component of a single pair of pants}\label{fig:pants1}		
	\end{subfigure}
	\quad
	\begin{subfigure}[t]{1.7in}
		\centering
		\includegraphics[scale=0.4]{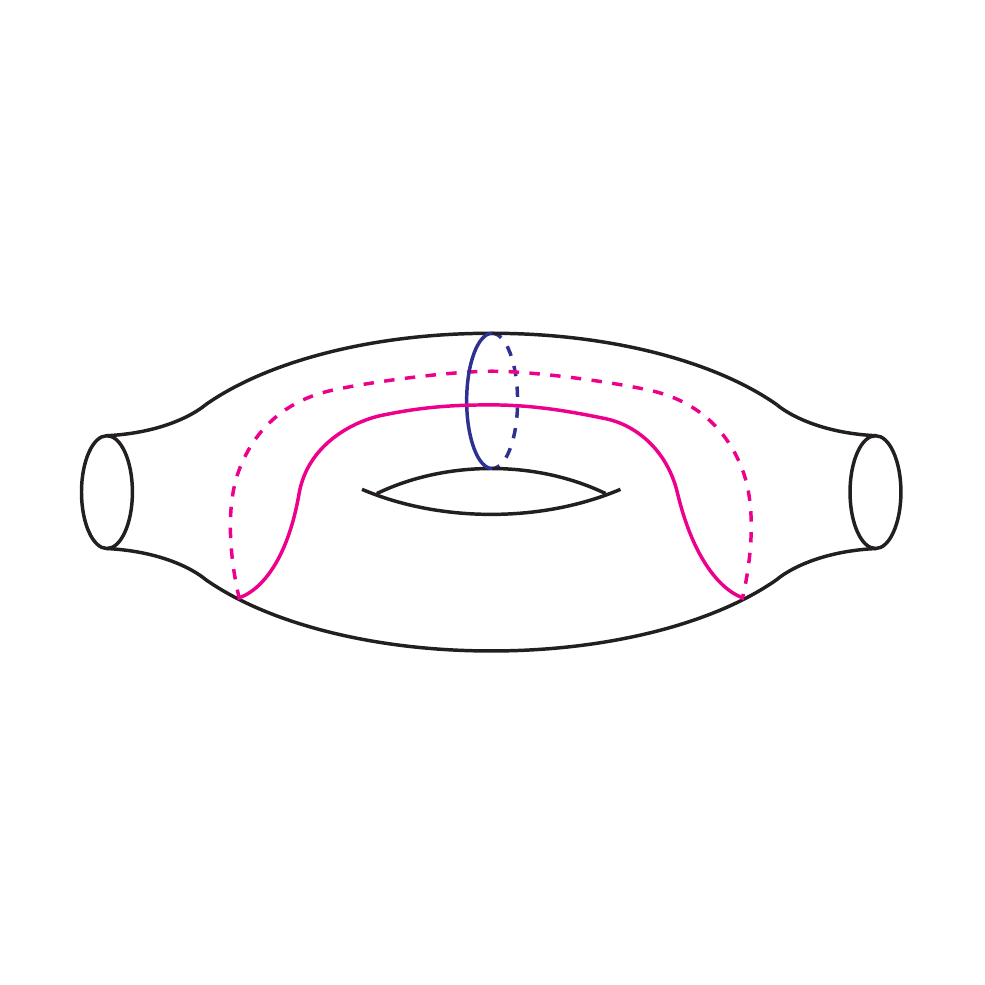}
		\caption{One of two shared boundary components of two pairs of pants}\label{fig:pants2}
	\end{subfigure}
	\quad
	\begin{subfigure}[t]{1.7in}
		\centering
		\includegraphics[scale=0.4]{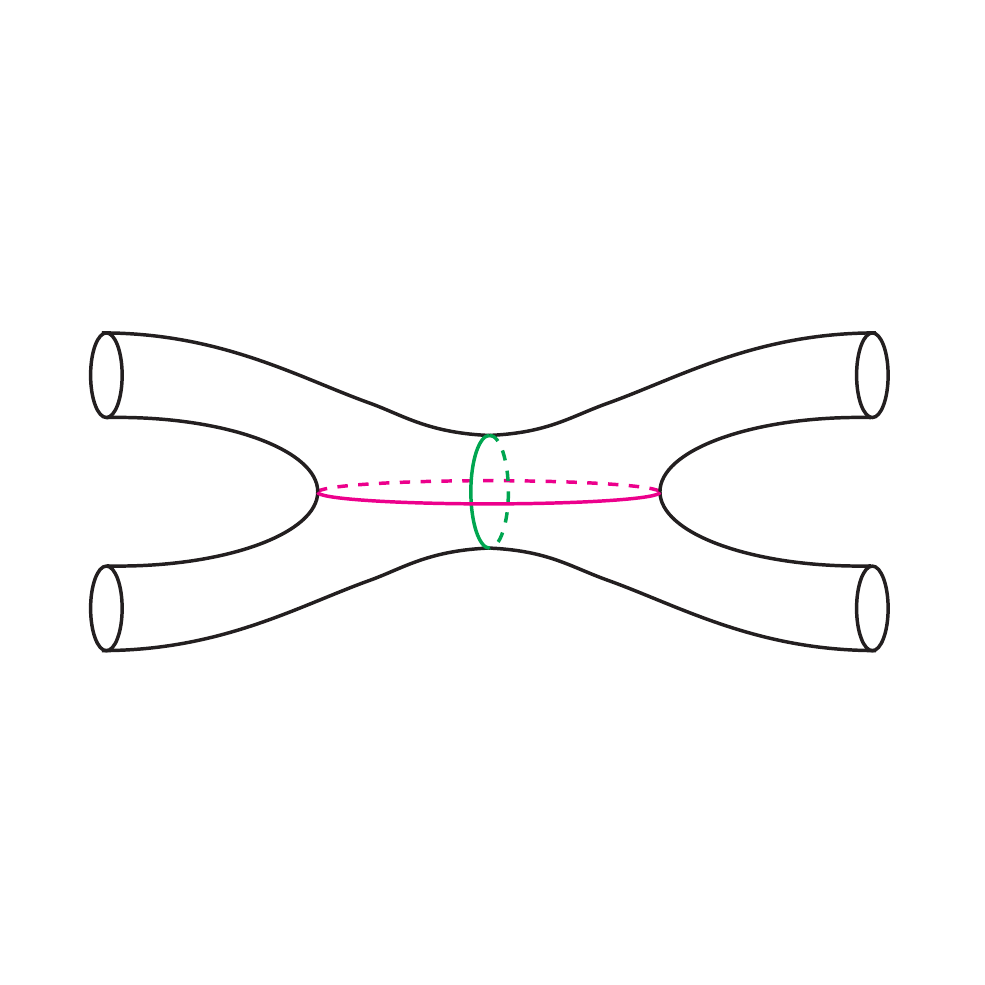}
		\caption{Unique shared boundary component of two pairs of pants}\label{fig:pants3}
	\end{subfigure}
	\caption{This shows how to choose the multi-curve along which we will perform the postive multi-tiwst to produce a new pants-curve.}\label{fig:coloringonpantscurve}
\end{figure}

Next two lemmas are preparation to produce a pants-like collection of laminations out of the pants-curve we produced above. 

\begin{lem} 
\label{lem:hyperbolicend}
Let $G$ be a $\COL$ group with an invariant lamination $\Lambda$ and $g \in G$ be a hyperbolic element. If $\Lambda$ has leaf $l$ one end of which is fixed by $g$, then $\Lambda$ has a leaf joining two fixed points of $g$. 
\end{lem}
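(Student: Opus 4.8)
The plan is to use the dynamics of the hyperbolic element $g$ together with closedness of $\Lambda$ as a subset of $\mathcal{M}$. Write $\Fix_g = \{a^+, a^-\}$, where $a^+$ is the attracting and $a^-$ the repelling fixed point, and suppose $l = (a^+, q)$ is a leaf of $\Lambda$ with $q \neq a^-$ (if $q = a^-$ there is nothing to prove; the case where the fixed end is $a^-$ is symmetric, using $g^{-1}$ in place of $g$). First I would consider the forward iterates $g^n(l) = (a^+, g^n(q))$. Since $a^+$ is attracting and $q \neq a^-$, the sequence $g^n(q)$ converges to $a^+$ as $n \to +\infty$; hence $g^n(l) \to (a^+, a^+)$, which only tells us that $\Lambda$ accumulates on the degenerate leaf at $a^+$, not what we want. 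So instead I would run the iteration backwards: $g^{-n}(q) \to a^-$ as $n \to +\infty$ (because $a^-$ is the attracting fixed point of $g^{-1}$), provided $q \neq a^+$, which holds since $l$ is non-degenerate. Therefore the leaves $g^{-n}(l) = (a^+, g^{-n}(q))$ form a sequence in $\Lambda$ converging in $\mathcal{M}$ to the pair $(a^+, a^-)$.

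The key point is then that $(a^+, a^-)$ is a genuine (non-degenerate) element of $\mathcal{M}$, since $a^+ \neq a^-$, and $\Lambda$ is by definition a closed subset of $\mathcal{M}$; hence the limit leaf $(a^+, a^-)$ belongs to $\Lambda$. This is exactly a leaf of $\Lambda$ joining the two fixed points of $g$, which is what we wanted. I would also record the one routine check needed to make the convergence argument clean: that none of the intermediate iterates $g^{-n}(q)$ equals $a^+$ or $a^-$ (so that each $g^{-n}(l)$ is actually a non-degenerate pair and hence a legitimate leaf), which is immediate because $g$ fixes $\{a^+, a^-\}$ setwise and $q \notin \{a^+, a^-\}$, so $g^{-n}(q) \notin \{a^+, a^-\}$ for all $n$.

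The only genuinely substantive step is the dynamical fact that an orientation-preserving homeomorphism of $S^1$ with exactly two fixed points, one attracting and one repelling, sends every non-fixed point to the attracting fixed point under forward iteration and to the repelling fixed point under backward iteration; this is standard (the complement of $\Fix_g$ is a union of two open arcs, on each of which $g$ acts without fixed points and monotonically, so by the intermediate value theorem the orbit of any point is monotone and must converge to the endpoints of its arc, and the attracting/repelling labeling fixes which endpoint is the forward limit). Given that, the argument is essentially a one-line limit-and-closedness argument. I expect no real obstacle here; the main thing to be careful about is simply tracking which fixed point is the forward limit versus the backward limit, and handling the symmetric case (fixed end $= a^-$) by passing to $g^{-1}$.
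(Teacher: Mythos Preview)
Your proof is correct and is precisely the argument the paper gives, only with the details spelled out: the paper's entire proof is the single line ``Either $g^n(l)$ or $g^{-n}(l)$ converges to the axis of $g$ as $n$ goes to $\infty$,'' which is exactly your iterate-and-use-closedness argument.
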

\begin{proof}
Either $g^n(l)$ or $g^{-n}(l)$ converges to the axis of $g$ as $n$ goes to $\infty$. 
\end{proof} 

\begin{lem} 
\label{lem:fixedpointisnotendpoint}
Let $G$ be a $\COL_n$ group for some $n \ge 1$ and let $\{\Lambda_{\alpha}\}$ be a collection of $n$ pairwise transverse dense invariant laminations of $G$. 
If $x \in S^1$ is a fixed point of a hyperbolic element $g$ of $G$, then there exists at most one lamination $\Lambda_\alpha$ which has a leaf with $x$ as an endpoint. 
\end{lem}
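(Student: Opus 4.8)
The plan is to reduce the statement immediately to Lemma~\ref{lem:hyperbolicend} together with the definition of transversality, so the argument is very short. Write $x$ and $y$ for the two fixed points of the hyperbolic element $g$ (one attracting, one repelling). I would argue by contradiction: suppose that two \emph{distinct} laminations $\Lambda_\alpha, \Lambda_\beta$ in the given collection each possess a leaf having $x$ as an endpoint.

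Since a leaf of a lamination on $S^1$ is by definition a pair of distinct points, such a leaf of $\Lambda_\alpha$ is a non-degenerate leaf of the form $(x, z)$ with $z \neq x$, and one of its ends, namely $x$, is fixed by $g$. Lemma~\ref{lem:hyperbolicend} then applies and gives that $\Lambda_\alpha$ contains a leaf joining the two fixed points of $g$, i.e. the ``axis leaf'' $(x, y)$. Running the same argument for $\Lambda_\beta$ shows $(x,y) \in \Lambda_\beta$ as well. Hence $(x,y) \in \Lambda_\alpha \cap \Lambda_\beta$, which contradicts pairwise transversality, namely $\Lambda_\alpha \cap \Lambda_\beta = \emptyset$ in $\mathcal{M}$ for $\alpha \neq \beta$. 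Therefore at most one lamination in the collection has a leaf with $x$ as an endpoint.

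I do not expect any genuine obstacle here: the only hypothesis that needs checking is that of Lemma~\ref{lem:hyperbolicend}, and this is automatic, since having a leaf ending at $x$ means that leaf has the $g$-fixed point $x$ as one of its two ends. Note also that the density hypothesis on the $\Lambda_\alpha$ is not used at all; the proof uses only that they are laminations, $G$-invariant, pairwise transverse, and that $g$ is hyperbolic. The same reasoning will presumably be the template for the parabolic case handled separately in the pants-like setting, where the sharing of an endpoint is precisely what is allowed.
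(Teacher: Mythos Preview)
Your proof is correct and follows exactly the paper's approach: the paper's own proof simply says ``This is a consequence of Lemma~\ref{lem:hyperbolicend} and the transversality of the laminations,'' and you have spelled out precisely that argument. Your observation that the density hypothesis is not actually used is also accurate.
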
 
\begin{proof} This is a consequence of Lemma \ref{lem:hyperbolicend} and the transversality of the laminations. 
\end{proof}

\begin{figure}	
	\centering
	\begin{subfigure}[t]{1.7in}
		\centering
		\includegraphics[scale=0.4]{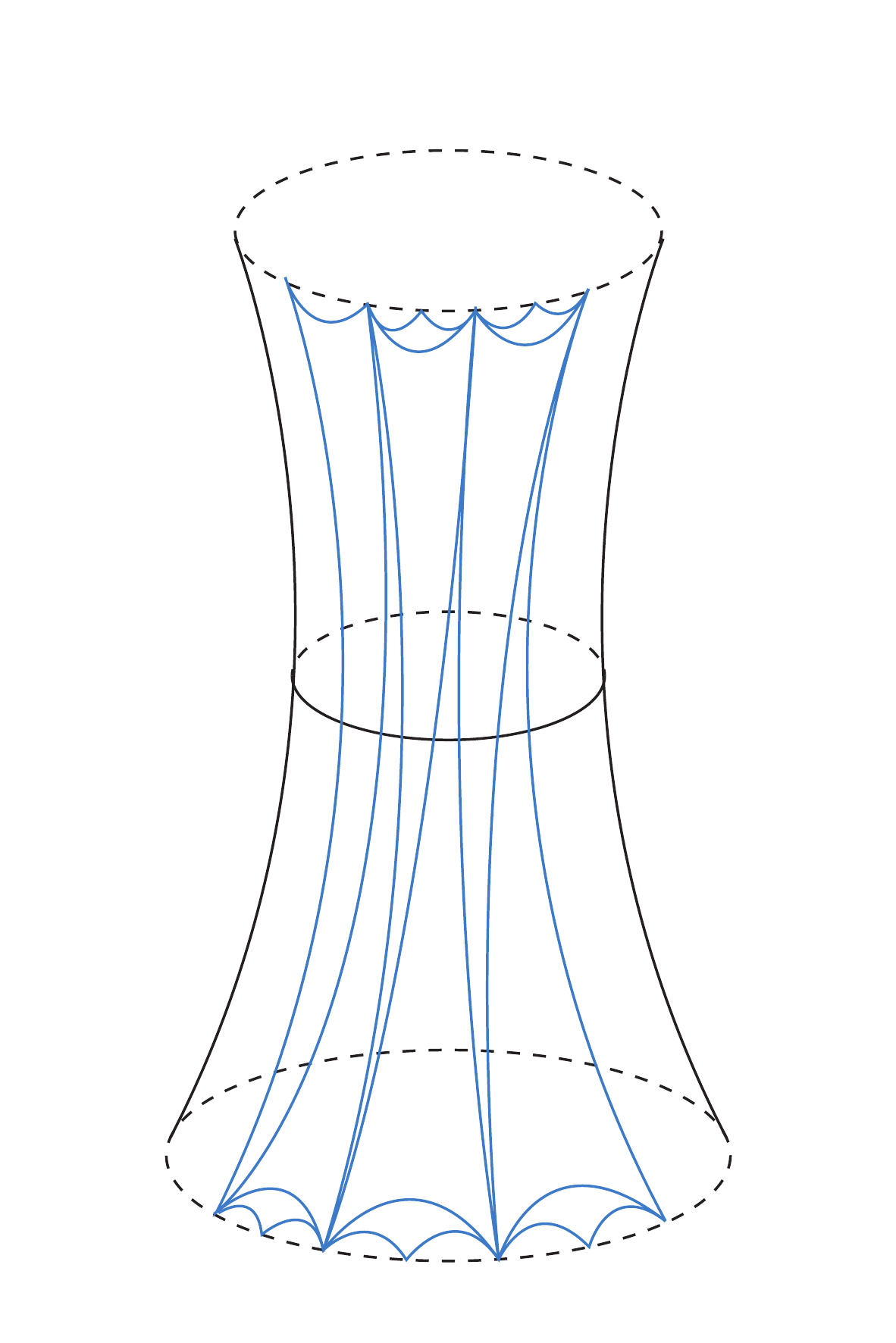}
		\caption{This is the case when the quotient surface is an infinite annulus. }\label{fig:annulus1}		
	\end{subfigure}
	\quad
	\begin{subfigure}[t]{1.7in}
		\centering
		\includegraphics[scale=0.4]{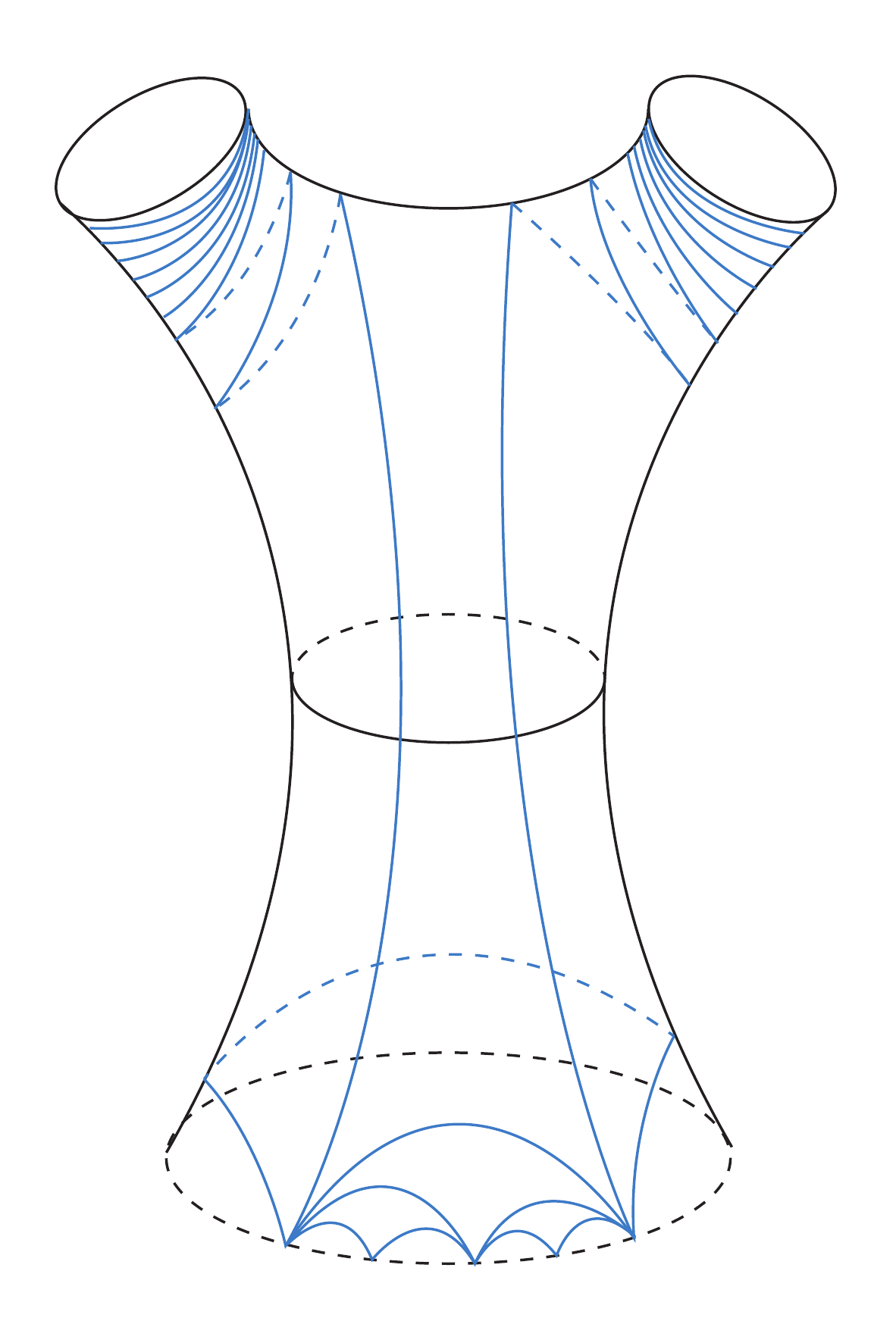}
		\caption{This is the case when there exists an annulus component glued to a pair of pants. }\label{fig:annulus2}
	\end{subfigure}
	\caption{ Indeed one can put arbitrarily many pairwise transverse very full laminations on the annulus components in this way. }
	\label{fig:laminatedannuli}
\end{figure}

\begin{figure}[ht]
\begin{center}
\includegraphics[scale=0.6]{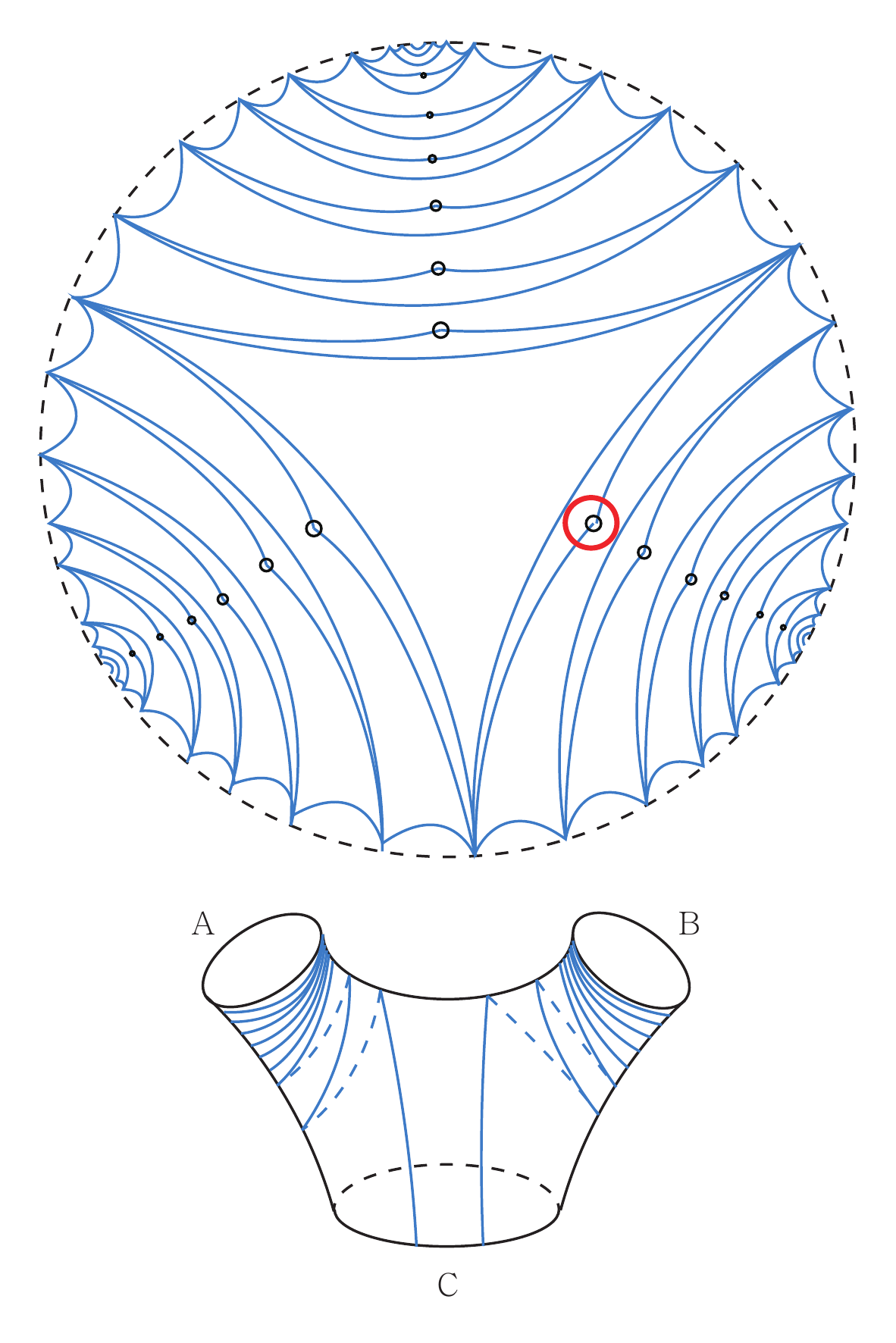}
\end{center}
\caption{ Above figure shows an example of the subsurface which we consider in the proof of Theorem \ref{thm:fuchsianispcol3}. One can choose the endpoints of the leaves on the ideal boundary arbitrarily so that we can put as many pairwise transverse very-full laminations on such a subsurface as we want. Look at the part where the red circle is. Here a pair of pants is attached as in the figure below. The boundary component labeled by `C' is not included in $X_0$ but those labeled by `A' and `B' are. The boundary curves A and B could be cusps or glued along each other. 
}
\label{fig:laminatedhalfplanewithpants}
\end{figure}

\begin{thm}
\label{thm:fuchsianispcol3}
 Any Fuchsian group $G$ such that $\HH^2/G$ is not the thrice-punctured sphere is a pants-like $\COL_3$ group. 
\end{thm}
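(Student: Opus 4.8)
The plan is to explicitly produce three pairwise transverse very-full $G$-invariant laminations whose only shared endpoints are cusp points. Let $S = \HH^2/G$; since $G$ is torsion-free discrete and $S$ is not the thrice-punctured sphere, $S$ is pants-decomposable in the sense of the previous section (including the infinite-area case via Theorem 3.6.2 of \cite{Hubb}). By Proposition \ref{prop:infinitepantscurves} there are pants-curves $X_0, X_1, X_2$ on $S$ with no curve of $X_i$ homotopic to a curve of $X_j$ for $i \neq j$. The idea is to turn each pants-curve $X_i$ into a very-full lamination $\widehat\Lambda_i$ by first realizing the $X_i$ as geodesics, then filling in each complementary pair of pants (and each half-annulus or half-plane piece, in the infinite-area case) with a finite-leaved ideal triangulation (or an infinite fan of leaves on the non-compact pieces, as suggested by Figures \ref{fig:laminatedannuli} and \ref{fig:laminatedhalfplanewithpants}), and finally lifting everything to $\partial_\infty \HH^2$; invariance under $G$ is automatic because the whole construction is done downstairs on $S$.

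First I would handle one pants-curve $X$ in isolation: realize $X$ by geodesics, and in each pair of pants $P$ of $S \setminus X$ choose a geodesic lamination of $P$ (rel boundary) all of whose complementary regions are ideal triangles — for a compact pair of pants this is a finite ideal triangulation with leaves spiralling onto the boundary geodesics; for pants with cusps one uses the cusps as ideal vertices; for half-annuli and half-planes one puts down an infinite family of leaves with prescribed endpoints on the ideal boundary arcs. The union of $X$ with all these pieces is a geodesic lamination $\Lambda_X$ on $S$ whose complementary regions are all ideal polygons, so its lift $\widehat\Lambda_X \subset \mathcal{M}$ is a $G$-invariant lamination, and it is very-full. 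Density of $\widehat\Lambda_X$ follows exactly as in Lemma \ref{lem:liftinglamdense}: a fundamental domain of $S$ meets every half-plane of $\HH^2$, so endpoints of lifts of any fixed leaf are dense in $S^1$.

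The crucial point is the pants-like transversality of the triple $\{\widehat\Lambda_0, \widehat\Lambda_1, \widehat\Lambda_2\}$, and I expect this to be the main obstacle. Transversality (no common leaf) on $S^1$ reduces to the two geodesic laminations on $S$ having no common leaf; a common leaf of $\Lambda_{X_i}$ and $\Lambda_{X_j}$ would either be a curve of $X_i$ homotopic into $X_j$, contradicting Proposition \ref{prop:infinitepantscurves}, or a filling leaf spiralling onto the $X$'s, which can be excluded by choosing the filling leaves in $\Lambda_{X_i}$ and $\Lambda_{X_j}$ generically (and the pants-curves $X_i$ are already pairwise non-homotopic so their pants-decompositions differ). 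For the endpoint condition one must check that if a leaf of $\widehat\Lambda_i$ and a leaf of $\widehat\Lambda_j$ share an endpoint $p \in S^1$, then $p$ is a cusp point of $G$. A shared endpoint that is not a parabolic fixed point would have to be an endpoint of a lift of some closed geodesic (a boundary curve of a pair of pants in one of the decompositions) or an accumulation point of lifts; using Lemma \ref{lem:fixedpointisnotendpoint} — which says an endpoint fixed by a hyperbolic element lies on a leaf of at most one of the laminations — together with the fact that the spiralling leaves can be arranged to approach the $X_i$-geodesics from controlled sides, one rules this out, leaving only the parabolic (cusp) endpoints, which genuinely can be shared. Assembling these checks shows $\{\widehat\Lambda_0,\widehat\Lambda_1,\widehat\Lambda_2\}$ is a pants-like collection of three very-full $G$-invariant laminations, so $G$ is pants-like $\COL_3$.
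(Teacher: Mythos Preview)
Your approach is essentially the paper's: build $\Lambda_i$ from the pants-curve $X_i$ by ideally triangulating each pair of pants (and laminating the infinite-area pieces), lift, and verify the pants-like condition via Lemma~\ref{lem:hyperbolicend}/\ref{lem:fixedpointisnotendpoint}. Two points where the paper is more careful than your sketch are worth flagging.

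First, you invoke Proposition~\ref{prop:infinitepantscurves} directly on $S$, but that proposition is stated only for pants-decomposable surfaces, and by the paper's definition an infinite-area $S$ with funnels or half-plane ends is \emph{not} pants-decomposable. The paper does not apply Proposition~\ref{prop:infinitepantscurves} to $S$ itself in that case; instead it removes from an initial multicurve $X$ the boundary curves of half-annulus components (and the curves appearing in the half-plane regions) to obtain a pants-curve $X_0$ on a genuinely pants-decomposable subsurface $S'$, produces $X_1, X_2$ on $S'$ via Proposition~\ref{prop:infinitepantscurves}, and only then laminates the leftover half-annulus and half-plane pieces by hand (Figures~\ref{fig:laminatedannuli}, \ref{fig:laminatedhalfplanewithpants}). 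Your sketch conflates these steps.

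Second, your verification of the pants-like property is more roundabout than necessary. The clean observation, which the paper uses, is that by construction \emph{every} endpoint of a leaf of $\widehat\Lambda_i$ is already a fixed point of some hyperbolic or parabolic element of $G$ (leaves are either lifts of closed geodesics, or spiral onto them, or run into cusps). Once you know this, Lemma~\ref{lem:hyperbolicend} plus transversality immediately gives that a hyperbolic fixed point can be an endpoint for at most one $\widehat\Lambda_i$; there is no need to talk about ``accumulation points of lifts'' or ``controlled sides.''
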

\begin{proof} 
 We start with the case when $G$ is the fundamental group of a pants-decomposable surface $S$. 
 Let $(X_i)_{i = 0,1,2}$ be the pants-curves as in Proposition \ref{prop:infinitepantscurves}. For each $i$, let $L_i$ be the lamination on $S$ obtained from $X_i$ by decomposing the interior of each pair of pants into two ideal triangles. It is possible to put a hyperbolic metric on $S$ so that $L_i$ is a geodesic lamination. Identify the universal cover of $S$ with $\HH^2$. $G$ acts on the circle at infinity.
 Let $\Lambda_i$ be the lamination of the circle at infinity obtained by lifting $L_i$ to $\HH^2$ and taking the end-points data. Since all the complementary regions are ideal triangles, it is very full.
 Also any leaf of $L_i$ is either a simple closed geodesic, or it is an infinite geodesic each of whose end either accumulates to a simple closed geodesic or escape to a cusp. Hence each end is a fixed point of some parabolic or hyperbolic element of $G$. Now the pants-like property follows from Lemma \ref{lem:hyperbolicend} and the transversality of the laminations. 
 
 We would like to get the same conclusion as before in the general case if $G$ is a Fuchsian group but its quotient surface $\HH^2/G$ is neither a thrice-punctured sphere nor pants-decomposable. 
 
 Let's first deal with the half-annulus components. Suppose that $X$ is a multi-curve on the quotient surface $S$ such that $S \setminus X$ consists of pairs of pants and half-annuli. 
 If two half-annuli are glued along a simple closed geodesic, our surface is actually an annulus and the lamination could be taken as in Figure \ref{fig:annulus1}. Since we can take the ends of such a lamination arbitrarily, it is obvious that there are arbitrarily many such invariant laminations which are pairwise transverse. If the surface is not an annulus, a half-annulus component needs to be attached to a pair of pants. Let $X_0$ be the collection of simple closed geodesics obtained from the $X$ by removing those boundaries of half-annulus components. Let $S'$ be the complement of the half-annuli. As in the proof of Proposition \ref{prop:infinitepantscurves}, we can find other pants-curves $X_1, X_2$ on $S'$ so that $X_0, X_1, X_2$ are disjoint in the curve complex of $S'$. Now we decompose the interior of each pair of pants into two ideal triangles as before. 
 
 We need to put more leaves on each component of $S \setminus X_i$ for any $i$ which is the union of one half-annulus and one pair of pants glued along a cuff.  We construct a lamination inside such a component as in Figure \ref{fig:annulus2}. Again, we can put an arbitrary lamination on the ideal boundary part of the half-annulus. Note that we construct each lamination so that all gaps are finite-sided, thus we are done. 
 
 Now we consider the case where $S$ has even half-plane components. In the Theorem 3.6.2. of \cite{Hubb}, it is also shown that if $Z$ denotes the set of points of a pants-curve $X$, then components of $\overline{Z} \setminus Z$ are simple infinite geodesics bounding half-planes, i.e., we know exactly how the half-plane components arise in the decomposition of a complete hyperbolic surface. Let $X$ be a multi-curve and let $Z$ be the set of points of simple closed curves in $X$ such that $S \setminus \overline{Z}$ consists of pairs of pants, half-annuli, and half-planes, and the boundaries of half-plane components form the set $\overline{Z} \setminus Z$. We will define $X_0$ by removing some geodesics from $X$. As before,we remove all the boundary curves of half-annulus components. Observe that there is a part of a surface which is homeomorphic to a half-plane with families of cusps and geodesic boundaries which converge to the ideal boundary (see Figure 3.6.3 on the page 86 of \cite{Hubb} for example). On this subsurface, there are infinitely many components of $\overline{Z}$ so that this subsurface is decomposed into pairs of pants and some half-planes. We remove all the components of $X$ appear on this type of subsurface. Again, $X_0$ is a pants-curve of a pants-decomposable subsurface $S'$ of $S$ with geodesic boundaries. On $S'$, we construct $X_1, X_2$ as before. 
 Among the connected components of $S \setminus S'$, the one containing a half-annulus can be laminated as we explained in the previous paragraph. In the connected component which is homeomorphic to an open disk with punctures, we can do this as in Figure \ref{fig:laminatedhalfplanewithpants}. Once again, since the ideal boundary part is invariant, we can put an arbitrary lamination there. It is also obvious that the way we construct a lamination gives a very full lamination. 
 
 We have shown the theorem. 
\end{proof} 

\begin{rmk} We constructed a pants-like collection of laminations for Fuchsian groups using pants-decompositions in the proof of Theorem \ref{thm:fuchsianispcol3}. This is where the name `pants-like' comes from. 
\end{rmk}

We would like to see if the converse of Theorem \ref{thm:fuchsianispcol3} is also true. In order to answer that question, one needs to analyze the properties of pants-like $\COL_3$ groups. 


\section{Rainbows in Very-full Laminations}
\label{sec:rainbow}
Before we move on, we would like to understand better the structure of very-full laminations. Recall that $\mathcal{M}$ is the set of all pairs of two distinct points of $S^1$, which is homeomorphic to an open M\"{o}bius band.

Let $p \in S^1$ and $\Lambda$ be a dense lamination on $S^1$. Suppose that there is a sequence of leaves of $\Lambda$ both of whose ends converge to $p$ but from opposite sides. We call such a sequence a \emph{rainbow} at $p$. Imagine the upper half-plane model of $\HH^2$ and that we stand at $x$ in the real line which is not an endpoint of a lamination. The name `rainbow' would make sense in this picture. See Figure \ref{fig:rainbow}.

\begin{figure}[ht]
\begin{center}
\includegraphics[scale=0.5]{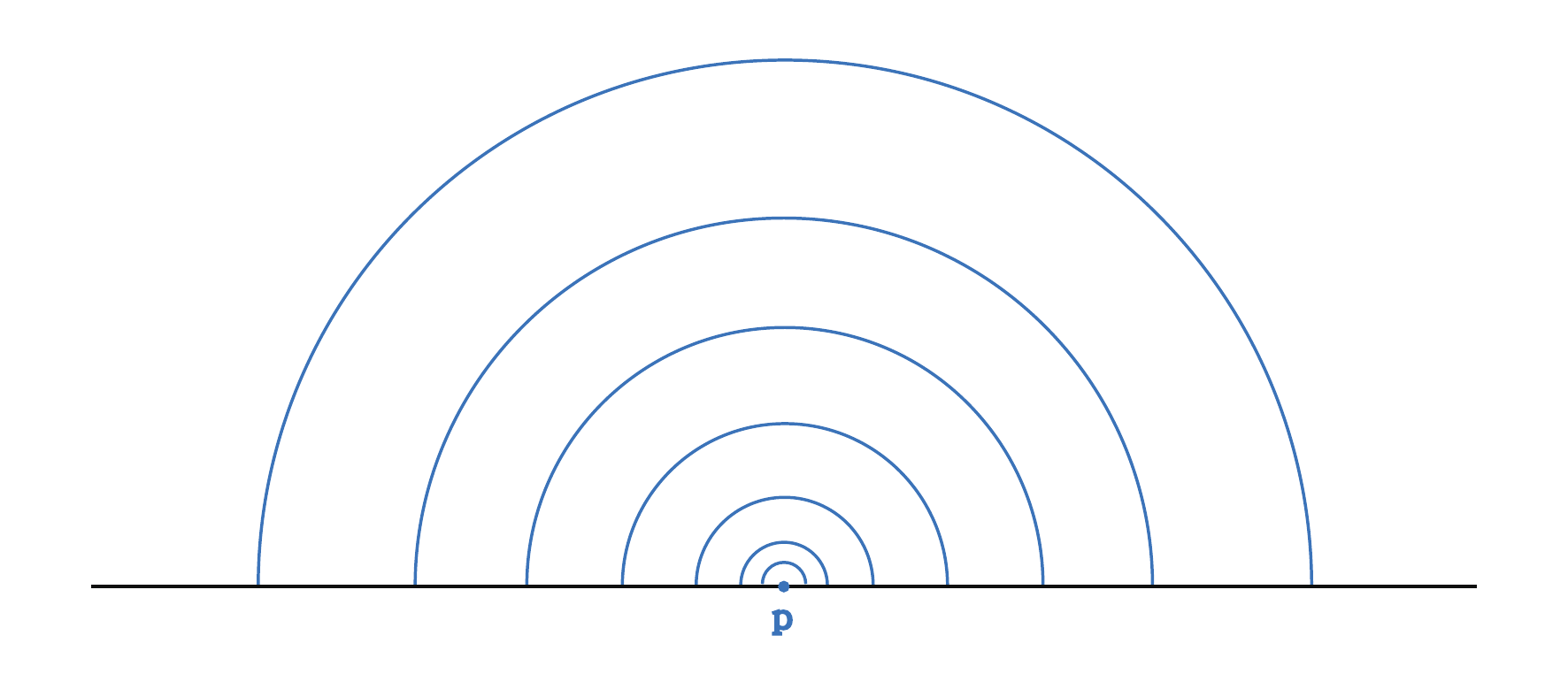}
\end{center}
\caption{ This is a schematic picture of a rainbow at $p$. }
\label{fig:rainbow}
\end{figure}  

The following lemma is more or less an observation. 
\begin{lem} 
\label{lem:veryfulllam}
Let $\Lambda$ be a very full lamination of $S^1$. Then $\Lambda$ is dense. Further, for any gap $P$ of $\Lambda$, if $x \in S^1 \cap P$, then $x$ is an endpoint of some leaf of $\Lambda$. 
\end{lem}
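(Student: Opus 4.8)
The plan is to split the statement into its two assertions and handle them in order, both by exploiting the defining property of a very-full lamination: every gap is a finite-sided ideal polygon.

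First I would prove density. Suppose $\Lambda$ is not dense; then $E(\Lambda)$ omits some open arc $I \subset S^1$. The complement of the geodesic realization of $\Lambda$ in $\DD$ is then nonempty near the chord subtended by $I$, so there is a complementary region (gap) $P$ whose closure meets $I$ in a nondegenerate arc. By the very-full hypothesis, $P$ is a finite-sided ideal polygon, so $\overline{P} \cap S^1$ is a \emph{finite} set of points together with finitely many closed arcs of $S^1$ that are not subtended by any leaf — but a finite-sided ideal polygon has all of its vertices as endpoints of its (finitely many) sides, and the portion of $S^1$ in $\overline P$ between two consecutive vertices is a single arc whose endpoints lie in $E(\Lambda)$. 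Intersecting with $I$, we find $I$ contains such an arc, so in particular its endpoints are in $E(\Lambda) \cap \overline I$; pushing this a little, one sees $I$ cannot be disjoint from $E(\Lambda)$ unless $I$ is contained in the interior of a single ``side arc'' of one gap. But then that gap is an ideal polygon one of whose sides is a leaf $\ell$ whose two endpoints bound an arc containing $I$, so those endpoints lie arbitrarily — this is the routine part; the upshot is that omitting an arc forces an infinite-sided gap (or a gap with an interior arc of $S^1$), contradicting very-fullness. Hence $E(\Lambda)$ is dense.

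Second, for the gap statement: let $P$ be a gap and $x \in S^1 \cap P$. Since $P$ is a finite-sided ideal polygon, its boundary in $\overline{\DD}$ consists of finitely many leaves (sides) $\ell_1, \dots, \ell_k$ and finitely many arcs $A_1, \dots, A_k$ of $S^1$, where each $A_j$ is the arc of $S^1$ cut off by $\ell_j$ on the far side from $P$'s interior, and consecutive sides share a vertex on $S^1$. The point $x \in S^1 \cap P$ must be one of these finitely many vertices: the set $S^1 \cap \overline{P}$ is exactly the set of vertices $\{v_1, \dots, v_k\}$ of the ideal polygon (the arcs $A_j$ lie outside $\overline P$, not inside it — they are subtended by sides $\ell_j$ and belong to the complementary side). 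Each vertex $v_j$ is by definition a common endpoint of two consecutive sides $\ell_{j-1}, \ell_j$, hence is an endpoint of a leaf of $\Lambda$. Therefore $x \in E(\Lambda)$.

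The main obstacle — and the only place care is genuinely needed — is the first part: translating ``$\Lambda$ very-full, i.e. all gaps finite-sided ideal polygons'' into ``$E(\Lambda)$ dense.'' The subtlety is that a priori a complementary region of the chord diagram could be strange (e.g. a region bounded by a leaf and an arc of $S^1$, i.e. a ``bigon-like'' region with an interior boundary arc of $S^1$), and one must argue that such a region is \emph{not} a finite-sided ideal polygon in the sense intended, so it is excluded by hypothesis. Equivalently, I would phrase very-fullness as: no complementary region contains an arc of $S^1$ in its boundary, and every complementary region has finitely many sides; then ``contains no boundary arc of $S^1$'' immediately gives that the complementary regions cannot leave a gap in $E(\Lambda)$, and density follows directly. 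Once that definitional point is pinned down, both parts are short.
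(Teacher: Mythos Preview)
Your second assertion is handled exactly as the paper does: a finite-sided ideal polygon meets $S^1$ only in its vertices, each of which is a leaf endpoint. Good.

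For density, you have the right approach but the write-up is tangled. The sentences about ``$\overline P\cap S^1$ is a finite set of points together with finitely many closed arcs'' and ``the portion of $S^1$ in $\overline P$ between two consecutive vertices is a single arc'' are not correct for a finite-sided ideal polygon: its closure meets $S^1$ in \emph{exactly} the finite vertex set and nothing else. You seem to realize this by the end, but the middle paragraph never actually completes the contradiction; it trails off into ``this is the routine part'' precisely where the argument needs to land.

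The paper's one clean step that you are missing is this: let $l$ be the geodesic in $\DD$ joining the endpoints of the omitted arc $I$. No leaf of $\Lambda$ can cross $l$, since any leaf crossing $l$ would have an endpoint in $I$. Therefore any point $p$ on $l$ lies in a single gap $P$, and that gap contains all of $l$, hence its closure contains all of $I\subset S^1$. That immediately contradicts $P$ being a finite-sided ideal polygon. With this observation your argument becomes two lines instead of a paragraph of hedging, and you do not need to renegotiate the definition of very-full.
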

\begin{proof} Suppose $\Lambda$ is not dense. Then we can take an open connected arc $I$ of $S^1$ where the leaves of $\Lambda$ have no endpoints. Let $l$ be a geodesic connecting the endpoints of $I$. $\Lambda$ has no leaf intersecting $l$. Take a point $p$ on $l$. Clearly the gap containing $p$ cannot be a finite-sided ideal polygon. 
 Suppose $P$ is a gap of $\Lambda$ and $x \in S^1 \cap P$. Since $P$ is a finite-sided ideal polygon, it intersects $S^1$ only at points to which two sides of $P$ converges. Hence $x$ is an endpoint of some leaf. 
\end{proof} 

The proof of the following lemma is easily provided from basic facts of hyperbolic geometry. 
\begin{lem} 
\label{lem:endpointcriterion}
Consider a very full lamination $\Lambda$ of $S^1$. Let $x \in \DD$. For $p \in S^1$, a gap of $\Lambda$ containing $x$ contains $p$ if and only if there is no leaf of $\Lambda$ crossing the geodesic ray from $x$ to $p$. 
\end{lem}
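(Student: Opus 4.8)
The plan is to reduce everything to two standard facts about $\HH^2$: a finite-sided ideal polygon is geodesically convex, being an intersection of half-planes; and any two distinct complete geodesics of $\HH^2$ are either disjoint (including asymptotic) or cross transversally in a single point of $\DD$, never tangentially. Throughout, ``a leaf crosses the ray'' is to be read as a transverse intersection in the open disk $\DD$, so a leaf that merely contains part of the ray, or shares only the ideal endpoint $p$ with it, does not count; this convention is what makes both implications come out cleanly. We may assume $x$ lies in the interior of a gap $P$ of $\Lambda$, since if $x$ lies on a leaf the argument is identical once one of the adjacent gaps is fixed, using that leaves of a lamination are pairwise non-crossing.

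For the forward implication, suppose the gap $P$ containing $x$ has $p\in S^1$ in its closure. By Lemma~\ref{lem:veryfulllam} (and its proof) $P$ is a finite-sided ideal polygon meeting $S^1$ exactly in its ideal vertices, so $p$ is an ideal vertex of $P$. Since $\overline P$ is convex and $x\in P$, the geodesic ray from $x$ to $p$ lies in $\overline P$; it can meet $\partial P\cap\DD$ only by crossing a side, which would carry it out of $\overline P$ (geodesics are not tangent), so in fact the ray lies in the interior of $P$ except for its ideal endpoint $p$. As no leaf of $\Lambda$ enters the interior of a gap, no leaf crosses the ray.

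For the converse, suppose no leaf of $\Lambda$ crosses the ray $r$ from $x$ to $p$, and let $P$ be the gap containing $x$. If $p\notin\overline P$, put $t_0=\sup\{\, t\ge 0 : r([0,t))\subseteq\mathrm{int}\,P \,\}$, which is finite because $r$ eventually leaves $\overline P$. The point $r(t_0)$ lies in $\DD$ (a geodesic ray parametrized by arclength reaches $S^1$ only at infinite parameter) and on $\partial P$, hence on a side $\ell$ of $P$, which is a leaf of $\Lambda$. The geodesics $r$ and $\ell$ are distinct — otherwise $p$ would be an ideal endpoint of $\ell$, hence an ideal vertex of $P$, contradicting $p\notin\overline P$ — so they meet transversally at $r(t_0)\in\DD$; that is, $\ell$ crosses $r$, a contradiction. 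Hence $p\in\overline P$, i.e., the gap containing $x$ contains $p$.

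The proof is essentially bookkeeping, so there is no serious obstacle; the one point requiring care is the consistent interpretation of ``crossing'' as transverse intersection in $\DD$, together with the fact that a side of the gap $P$ cannot be crossed by, nor contain, any other leaf — this is exactly what prevents a side through $p$ (forward direction) or the ray running along a side (degenerate case) from being spuriously counted as a crossing.
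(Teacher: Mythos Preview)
Your argument is correct and is precisely the natural way to make the lemma rigorous; the paper itself does not give a proof but only remarks that it ``is easily provided from basic facts of hyperbolic geometry,'' so there is nothing to compare against beyond noting that your convexity-plus-transversality reasoning is exactly what that remark is gesturing at. Your explicit handling of the convention on ``crossing'' and of the degenerate case where the ray runs along a side is more careful than anything the paper states, and the only application (Theorem~\ref{thm:rainbow}) chooses $x$ off the lamination anyway, so the edge case $x\in\Lambda$ never arises.
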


Recall that for a lamination $\Lambda$ on $S^1$, $E_\Lambda$ denotes the set of endpoints of the leaves of $\Lambda$. There is a nice dichotomy. 
\begin{thm} [There are enough rainbows]
\label{thm:rainbow}
 Let $\Lambda$ be a very full lamination of $S^1$. For $p \in S^1$, either $p$ is in $E_\Lambda$ or $p$ has a rainbow. These two possibilities are mutually exclusive. 
\end{thm}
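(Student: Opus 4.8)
The plan is to establish the dichotomy in two parts: first, that the two possibilities are mutually exclusive, and second, that at least one of them always occurs.

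\textbf{Mutual exclusivity.} Suppose $p \in E_\Lambda$, so $p$ is the endpoint of some leaf $l = (p, q)$ of $\Lambda$. I claim there can be no rainbow at $p$. Any leaf $l'$ of a rainbow at $p$ has both endpoints close to $p$ on opposite sides; once those endpoints are close enough to $p$ (closer than $q$ on each side), the chord $l'$ separates $p$ from $q$, hence crosses $l$. But $\Lambda$ is a lamination, so its leaves are pairwise unlinked — contradiction. More carefully, I would fix the two arcs of $S^1 \setminus \{p, q\}$ and note that for a rainbow leaf $l'$ with one endpoint in each arc sufficiently near $p$, linking with $l$ is forced. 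So no rainbow exists at $p$ when $p \in E_\Lambda$.

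\textbf{Existence when $p \notin E_\Lambda$.} Assume $p \notin E_\Lambda$. Pick any point $x \in \DD$ and consider the geodesic ray from $x$ to $p$. Since $\Lambda$ is very full (hence dense by Lemma \ref{lem:veryfulllam}) and $p$ is not an endpoint, I want to produce leaves with both endpoints converging to $p$ from the two sides. The key mechanism: take a sequence of points $p_n \in S^1$ approaching $p$ from one side with $p_n \in E_\Lambda$ (possible by density). Each $p_n$ is an endpoint of a leaf $l_n = (p_n, r_n)$. If infinitely many of the $r_n$ also converge to $p$ (necessarily from the other side, since $l_n$ cannot be linked with a leaf through $p$ — but $p$ is not an endpoint, so I instead argue $r_n \to p$ directly using that $p \notin E_\Lambda$ and unlinkedness among the $l_n$), then $\{l_n\}$ is a rainbow and we are done. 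The remaining case is that the $r_n$ stay bounded away from $p$, say accumulating at some $q \neq p$; then passing to a subsequence, $l_n \to (p,q)$ in $\overline{\mathcal M}$, and since $\Lambda$ is closed, $(p,q) \in \Lambda$, contradicting $p \notin E_\Lambda$. Symmetrically one handles approach from the other side. The only subtlety is organizing the subsequence extraction so that both "sides" are covered; using Lemma \ref{lem:endpointcriterion} one can alternatively phrase this via the gap structure: if $p$ were in no rainbow, then leaves near $p$ on one side would all have to "escape" and one builds a gap containing $p$ in its boundary, forcing $p \in E_\Lambda$ by Lemma \ref{lem:veryfulllam}.

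\textbf{Main obstacle.} The delicate point is the case analysis in the existence half: ruling out the possibility that leaves accumulate at $p$ from one side only, or that a sequence of leaf-endpoints approaching $p$ has its companion endpoints escaping to a definite point. I expect the cleanest route is: since $\Lambda$ is closed in $\mathcal M$ and $p \notin E_\Lambda$, for each sequence $p_n \to p$ with $p_n \in E_\Lambda$, the partner endpoints $r_n$ cannot accumulate anywhere except possibly $p$ itself (any other accumulation point $q$ would give a leaf $(p,q) \in \Lambda$). Hence $r_n \to p$. Then one checks the $r_n$ approach from the opposite side of $p$ relative to the $p_n$ — if $r_n$ and $p_n$ were eventually on the same side, the nested chords would still force a leaf through $p$ in the limit or violate unlinkedness. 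This yields a genuine rainbow. Once that is pinned down, the theorem follows immediately.
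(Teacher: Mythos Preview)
Your mutual exclusivity argument is fine. Your closedness observation in the existence half is also correct and is a genuine simplification the paper does not exploit: if $p_n \to p$ with $p_n \in E_\Lambda$ and $(p_n, r_n) \in \Lambda$, then any accumulation point $q \neq p$ of the $r_n$ would give $(p,q) \in \Lambda$ by closedness of $\Lambda$ in $\mathcal{M}$, contradicting $p \notin E_\Lambda$; hence $r_n \to p$.

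The gap is the next step. Your assertion that ``if $r_n$ and $p_n$ were eventually on the same side, the nested chords would still force a leaf through $p$ in the limit or violate unlinkedness'' is false as stated: when $p_n, r_n \to p$ from the same side, the leaves $(p_n, r_n)$ converge only to the degenerate pair $(p,p) \in \partial\mathcal{M}$, which yields neither a leaf through $p$ nor any linking violation among the $(p_n,r_n)$. Moreover this same-side case really occurs --- near any $p \notin E_\Lambda$ there are small gaps lying entirely on one side of $p$, and their boundary leaves give exactly such sequences --- so you cannot conclude the $r_n$ lie on the opposite side from density and closedness alone. The very-full hypothesis must enter substantively, not just via density. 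The paper supplies this by running the contrapositive: assuming no rainbow gives a neighborhood $U$ in which no leaf has endpoints on both sides of $p$; then along the geodesic ray toward $p$, every crossing leaf has one end in $U$ and hence (by the no-rainbow assumption) the other end outside $U$, so these crossing leaves subconverge to a non-degenerate leaf with endpoint $p$. Your gesture toward Lemmas~\ref{lem:veryfulllam} and~\ref{lem:endpointcriterion} points the right way, but it is deferred rather than executed, and completing it essentially reproduces the paper's geometric step; the main line of your argument does not stand without it.
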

\begin{proof}
 It is clear that if $p \in E_\Lambda$, there is no rainbow. Suppose there is no rainbow for $p$. Then $p$ has a neighborhood $U$ so that if a leaf of $\Lambda$ has both endpoints in $U$, then both endpoints are contained in the same connected component of $U \setminus \{p\}$. Replacing $U$ by a smaller neighborhood, we may assume that no leaf connects the endpoints of $U$. 
 
 Identify $S^1$ with the boundary of the hyperbolic plane $\DD$ and realize $\Lambda$ as a geodesic lamination on $\DD$.  Let $q_1$ be a point on the geodesic connecting the endpoints of $U$. We may assume that there is no leaf passing through $q_1$.  The only way not having such a point is that the entire $\DD$ is foliated and $p$ is an endpoint of one of the leaves. Let $L$ be the geodesic passing through $q_1$ and ending at $p$ (See Figure \ref{fig:norainbow}). We denote the part of $L$ between a point $x$ on L and $p$ by $L_x$. Note that any leaf of $\Lambda$ crossing $L_{q_1}$ has one end in $U$ so that the other end must be outside $U$ by the assumption on $U$. 
 
 If there is no leaf of $\Lambda$ intersecting $L_{q_1}$, then the gap containing $q_1$ contains $p$ by Lemma \ref{lem:endpointcriterion}. Hence  $p$ must be an endpoint of a leaf by Lemma \ref{lem:veryfulllam} and we are done. 
 Suppose there is a leaf $l_1$ which crosses $L_{q_1}$ at $x$. Then let $q_2$ be a point in $L_x$ so that there is no leaf of $\Lambda$ passing through it. If there was no such a $q_2$, it means there is a leaf passing through each point of $L_x$ so there must be a leaf ending at $p$ whose other end is necessarily outside $U$. So, we may assume that such $q_2$ exists. 
 
 If there is no leaf of $\Lambda$ crossing $L_{q_2}$, then the gap containing $q_2$ contains $p$, we are done. Otherwise, a leaf, say $l_2$, crosses $L_{q_2}$ at $x_2$. Repeat the process until we obtain an infinite sequence $(q_i)$ on $L$ which converges to $p$. This is possible, since otherwise we must have some $x$ on $L$ such that no leaf of $\Lambda$ crosses $L_x$.  
  Since $(q_i)$ converges to $p$, the endpoints of the sequence $(l_i)$ of leaves in $U$ form a sequence converging to $p$, and the other endpoints are all outside $U$. By the compactness of $S^1$, we can take a convergent subsequence so that $p$ is an endpoint of the limiting leaf. In any case, $p$ must be an endpoint of a leaf. 
 
 Therefore, $p \in E_\Lambda$ if and only if $p$ has no rainbow. 
 \end{proof} 
 
 \begin{figure}[ht]
\begin{center}
\includegraphics[scale=0.5]{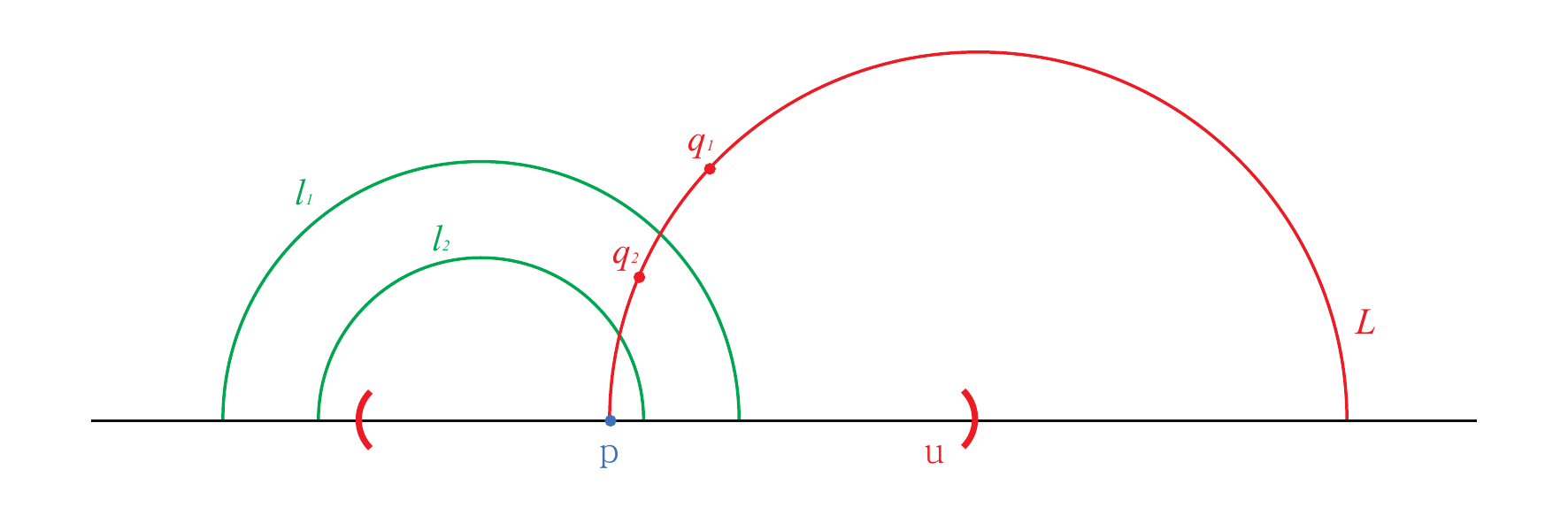}
\end{center}
\caption{ This shows a situation when we have no rainbow for $p$. }
\label{fig:norainbow}
\end{figure}  

\begin{cor}
\label{cor:parabolicfixedpoints}
Let $G$ be a group acting on $S^1$ and $\Lambda$ be a $G$-invariant very-full lamination. 
For $x \in S^1$ which is the fixed point of a parabolic element $g$ of $G$, there exist infinitely many leaves which have $x$ as an endpoint. 
\end{cor}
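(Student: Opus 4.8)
The plan is to combine the rainbow dichotomy (Theorem \ref{thm:rainbow}) with the dynamics of a parabolic element to produce infinitely many leaves ending at the fixed point $x$. First I would observe that since $g$ is parabolic with $\Fix_g = \{x\}$, for every $y \in S^1 \setminus \{x\}$ the orbit $\{g^n(y) : n \in \ZZ\}$ accumulates only at $x$: indeed $g^n(y) \to x$ as $n \to +\infty$ and as $n \to -\infty$ (on an interval removed from $x$, a fixed-point-free homeomorphism pushes points monotonically toward the single fixed point). This is the standard north-south-like behavior of a parabolic, except both ends go to the same point.

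Next I would show $x \in E_\Lambda$. If not, then by Theorem \ref{thm:rainbow}, $x$ has a rainbow: a sequence of leaves $(l_k)$ of $\Lambda$ both of whose endpoints converge to $x$ from opposite sides. Pick one such leaf $l = (a,b)$ with $a, b$ on opposite sides of $x$ and close to $x$; then $l$ is a genuine (non-degenerate) leaf not equal to any degenerate leaf at $x$. Apply powers of $g$: the leaves $g^n(l) = (g^n(a), g^n(b))$ are all leaves of $\Lambda$ by $G$-invariance. Since $a$ and $b$ lie on opposite sides of $x$, for suitable $n$ (using that $g^n(a) \to x$ and $g^n(b) \to x$ but the two endpoints stay separated by $x$ in the circular order until they both collide at $x$), one of these images becomes linked with $l$ itself — or more cleanly, one finds $n, m$ with $g^n(l)$ and $g^m(l)$ linked. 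This contradicts $\Lambda$ being a lamination. Hence $x \in E_\Lambda$, so some leaf $l_0 = (x, c)$ of $\Lambda$ has $x$ as an endpoint.

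Finally, to get \emph{infinitely many} such leaves, apply $g$ again: the leaves $g^n(l_0) = (x, g^n(c))$ for $n \in \ZZ$ all have $x$ as an endpoint, since $g$ fixes $x$. These are pairwise distinct because $g^n(c) \to x$ as $n \to +\infty$ while $g^n(c) \to x$ as $n \to -\infty$ along the other approach, so the other endpoints $g^n(c)$ take infinitely many values (if $g^n(c) = g^m(c)$ for $n \neq m$ then $g^{n-m}$ would fix $c \neq x$, contradicting that $g$ is parabolic). Therefore $\Lambda$ contains infinitely many leaves with $x$ as an endpoint.

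The main obstacle I anticipate is making the linking argument in the second step fully rigorous: one has to track carefully, in the circular order on $S^1$, how the pair $(g^n(a), g^n(b))$ moves relative to a fixed reference pair and confirm that some two iterates are genuinely linked rather than merely nested or sharing the limit point $x$. The key is that a rainbow leaf $l=(a,b)$ has $a,b$ strictly separated by $x$ on both sides, so as $g^n$ drags both endpoints toward $x$ they must sweep across each other; choosing the reference pair to be $l$ itself and a far-away iterate should suffice, but the bookkeeping with orientations and the possibility that $g$ moves points ``the wrong way'' past $x$ needs care. Everything else is routine once the rainbow dichotomy and the parabolic dynamics are in hand.
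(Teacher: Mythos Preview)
Your proposal is correct and follows essentially the same route as the paper: use Theorem~\ref{thm:rainbow} to force $x\in E_\Lambda$ by ruling out a rainbow via the parabolic dynamics, then iterate a single leaf $(x,c)$ under $g$ to get infinitely many. The paper packages your linking step slightly more cleanly by observing that any leaf avoiding $x$ must have both endpoints in a single fundamental domain $[y,g(y)]\subset S^1\setminus\{x\}$ in order for its $g$-iterates to remain pairwise unlinked, which a rainbow leaf (endpoints near $\pm\infty$ in $S^1\setminus\{x\}\cong\RR$) visibly cannot satisfy; this is exactly the bookkeeping you flagged as the main obstacle, and it resolves it in one line.
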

\begin{proof} 
Let $x \in S^1$ be the fixed point of a parabolic element $g$ of $G$ and pick $\Lambda_\alpha$. By Theorem \ref{thm:rainbow}, if $x$ is not an endpoint of a leaf of $\Lambda_\alpha$, then $x$ has a rainbow. But any leaf none of whose ends is $x$ must be contained in a single fundamental domain of $g$ to stay unlinked under the iterates of $g$ (here, a fundamental domain is the arc connecting $y$ and $g(y)$ in $S^1$ for some $y$ different from $x$). Then the existence of a rainbow would imply that $g$ is a constant map whose image is $x$ but  it is impossible since $g$ is a homeomorphism. Hence $x$ is an endpoint of some leaf $l$ of $\Lambda_\alpha$. 
Then $(g^n(l))_{n \in \ZZ}$ are infinitely many distinct leaves of $\Lambda_\alpha$ all of which have $x$ as an endpoint. 
\end{proof} 

\begin{cor}[Boundary-full laminations] 
\label{cor:boundaryfull}
 Suppose a group $G$ acts on $S^1$ faithfully and minimally. Let $\Lambda$ be a lamination of $S^1$ invariant under the $G$-action. If $\Lambda$ is very full and totally disconnected, then $\Lambda$ is boundary-full. 
\end{cor}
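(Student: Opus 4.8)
The plan is to show that $\partial \mathcal{M}$ is contained in the closure of $\Lambda$ in $\overline{\mathcal{M}}$, which amounts to showing that every point $p \in S^1$ is a limit of endpoints of leaves of $\Lambda$ whose lengths (diameters as chords) tend to zero. Equivalently, I want to produce for every $p \in S^1$ and every neighborhood $U$ of $p$ a leaf of $\Lambda$ with both endpoints in $U$. First I would dispose of the case $p \in E_\Lambda$: since $\Lambda$ is very-full and $p$ is an endpoint, $p$ lies in the boundary of at most two gaps (it could be a vertex of two adjacent ideal polygons) and is an endpoint of at least one leaf; I then need to find arbitrarily short leaves accumulating to $p$. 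For this, pick a leaf $l$ with endpoint $p$; if $p$ were isolated as a limit of short leaves, the gap structure near $p$ would force a gap with infinitely many sides or an arc of $S^1$ in its closure, contradicting very-fullness together with Lemma~\ref{lem:veryfulllam} — more carefully, I'd use minimality of the $G$-action to spread short leaves everywhere (see below).

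The main step is the case $p \notin E_\Lambda$. By Theorem~\ref{thm:rainbow}, $p$ has a rainbow: a sequence of leaves $(l_i)$ with both ends converging to $p$ from opposite sides. In particular, for any neighborhood $U$ of $p$, all but finitely many $l_i$ have both endpoints in $U$. So near any point of $S^1 \setminus E_\Lambda$ there are arbitrarily short leaves. Now I invoke minimality: the set of points of $S^1$ that are limits of arbitrarily short leaves of $\Lambda$ (i.e.\ whose every neighborhood contains a leaf of $\Lambda$ lying inside it) is easily checked to be closed and $G$-invariant; by the rainbow dichotomy it contains the nonempty set $S^1 \setminus E_\Lambda$ (nonempty because $E_\Lambda$ is countable, as $\Lambda$ is a lamination of a countable group's orbit data — or simply because a very-full lamination has countably many leaves while $S^1$ is uncountable), hence by minimality it is all of $S^1$. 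This is exactly the statement that the closure of $\Lambda$ in $\overline{\mathcal{M}}$ contains all of $\partial\mathcal{M}$, i.e.\ $\Lambda$ is boundary-full.

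I should double-check where the hypotheses "totally disconnected" and "$\Lambda$ very full" actually enter. Very-fullness is used through Theorem~\ref{thm:rainbow} to get the rainbow dichotomy. Total disconnectedness (no open subset of $\DD$ foliated by $\Lambda$) is what guarantees, in the rainbow construction, that we are not in the degenerate situation where the rainbow leaves fail to shrink — and it ensures $E_\Lambda \neq S^1$ is not the obstruction; more precisely it is needed so that the leaves $l_i$ in a rainbow can be taken with diameters $\to 0$ rather than merely having one end near $p$. I'd make sure the "limits of short leaves" set really is closed: if $p_n \to p$ and each $p_n$ has short leaves nearby, a diagonal argument gives short leaves near $p$; and $G$-invariance is immediate since $G$ acts by homeomorphisms on $S^1$ hence on $\mathcal{M}$, carrying short leaves to short leaves on compact subsets and, by a standard equicontinuity-free argument using that any homeomorphism is uniformly continuous on $S^1$, short leaves to short leaves everywhere.

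The hard part will be the bookkeeping in the $p \in E_\Lambda$ case and making the "arbitrarily short leaves accumulate at $p$" claim fully rigorous there without circularity — the cleanest route is probably to avoid treating it separately at all: observe $E_\Lambda$ is countable, so $S^1 \setminus E_\Lambda$ is dense; the "short-leaf limit set" $Z \subseteq S^1$ is closed, contains $S^1\setminus E_\Lambda$ by the rainbow argument, hence $Z = S^1$ by closedness alone (no need for minimality, in fact), and then boundary-fullness follows. So the genuine content is: (i) countability of $E_\Lambda$, (ii) the rainbow theorem gives short leaves near every non-endpoint, (iii) closedness of $Z$. The subtlety to watch is step (ii): a rainbow a priori only says the ends converge to $p$, and I must confirm this forces the chord-diameters to $0$, which is where I'll lean on total disconnectedness (otherwise an open foliated region could produce "rainbows" of non-shrinking leaves spiraling toward $p$).
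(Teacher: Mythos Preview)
Your argument has a genuine gap: the claim that a very-full lamination has only countably many leaves (hence that $E_\Lambda$ is countable, hence that $S^1\setminus E_\Lambda$ is dense) is false. The stable lamination of a pseudo-Anosov homeomorphism on a closed hyperbolic surface, lifted to $\HH^2$, is very-full (all complementary regions are finite-sided ideal polygons) and totally disconnected, yet it has uncountably many leaves --- a minimal geodesic lamination that is not a simple closed curve always does. So $E_\Lambda$ is uncountable there, and your ``cleanest route'' via countability collapses. Your fallback via minimality would still work if you could show $S^1\setminus E_\Lambda\neq\emptyset$, but your only justification for that is the same false countability claim; you do not actually use total disconnectedness anywhere in a way that rescues this (a rainbow already has both endpoints converging to $p$ by definition, so the ``shrinking'' you worry about is automatic --- total disconnectedness is not doing the work you suggest).

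The paper's proof is quite different and avoids the rainbow theorem entirely. It observes that the set $\overline{\Lambda}\cap\partial\mathcal{M}$ is closed and $G$-invariant, so by minimality it suffices to exhibit a single sequence of leaves whose endpoint-distance tends to zero. This is done directly: starting from any leaf $l_1$, minimality lets one map an endpoint of $l_1$ into the middle of the shorter of the two complementary arcs of $l_1$; the image leaf $l_2$ is unlinked with $l_1$ (both lie in $\Lambda$), so \emph{both} endpoints of $l_2$ must lie in that shorter arc, and hence the shorter arc of $l_2$ is at most half as long. Iterating yields the desired shrinking sequence. Note that this uses only minimality of the action and the unlinkedness of leaves; the hypotheses ``very-full'' and ``totally disconnected'' are not actually invoked beyond ensuring $\Lambda\neq\emptyset$.
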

\begin{proof} 
 The minimality of the action implies that once the closure of the lamination in $\overline{\mathcal{M}}$ contains at least one point in $\partial \mathcal{M}$, then it contains $\partial \mathcal{M}$ and thus the lamination is very-full (this is a simple diagonalization argument). 
 
 Let $l_1$ be any leaf of $\Lambda$. Due to the minimality, some element of $G$ maps one of the ends of $l_1$ somewhere in the middle of the shortest two arcs joining the endpoints of $l$. Let $l_2$ be the image of $l_1$ under the action of this element. Again due to the minimality, one can find an element of $G$ which maps one of the ends of $l_2$ somewhere in the middle of the shortest arcs in the complement of the endpoints of $l_1$ and $l_2$ in $S^1$. Let $l_3$ be the image of $l_2$ under that element. Repeating this procedure, one gets a sequence $(l_n)$ of leaves for which the distance between their endpoints tends to zero, hence giving a desired point in $\partial M$. 
\end{proof} 

In fact, the laminations we constructed for pants-decomposable surface groups satisfy the hypotheses of Corollary \ref{cor:boundaryfull}. Hence all of them are boundary-full laminations. 

\section{Classification of Elements of Pants-like $\COL_3$ Groups}
\label{sec:pcol3}

Any element of a Fuchsian group has at most two fixed points on $\partial_\infty \HH^2$. Hence, it might be useful to check how many fixed points an element of a pants-like $\COL_3$ group can have. 

\begin{lem}
\label{lem:leafconnectingfixedpoints}
Let $f$ be a non-identity orientation-preserving homeomorphism of $S^1$ with $3 \le |\Fix_f|$. Then any very full lamination $\Lambda$ invariant under $f$ has a leaf connecting two fixed point of $f$. Moreover, for any connected component $I$ of $S^1 \setminus \Fix_f$ with endpoints $a$ and $b$, at least one of $a$ and $b$ is an endpoint of a leaf of $\Lambda$. 
\end{lem}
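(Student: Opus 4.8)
The plan is to exploit the dynamics of $f$ on $S^1$ near its fixed points, using the ``enough rainbows'' dichotomy (Theorem \ref{thm:rainbow}) as the main tool. First I would fix a connected component $I$ of $S^1 \setminus \Fix_f$ with endpoints $a, b$. On $\bar I$, $f$ acts with no interior fixed point, so all interior points of $I$ move monotonically (either all toward $a$ or all toward $b$) under iteration of $f$; say $f^n(x) \to a$ and $f^{-n}(x) \to b$ for $x$ in the interior of $I$ (or the reverse). This already singles out $a$ and $b$ as a ``source/sink pair'' for the restricted dynamics on $\bar I$. The key claim to establish is that $a$ (or $b$) must lie in $E_\Lambda$: if not, then by Theorem \ref{thm:rainbow} there is a rainbow at $a$, i.e. a sequence of leaves $l_k$ with both ends converging to $a$ from opposite sides. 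I would argue this contradicts invariance: a leaf with both ends near $a$ but on opposite sides of $a$ has one end inside $I$ and one end outside $I$ (since $a$ is an endpoint of $I$). Applying high powers $f^{\mp n}$ pushes the interior end toward $b$ while pulling the exterior end of the leaf toward the exterior attracting fixed point adjacent to $a$ on the other side; tracking two such leaves $l_k, l_{k'}$ from the rainbow, their images become linked, contradicting that $\Lambda$ is a lamination. This proves the ``moreover'' clause: at least one of $a, b$ is in $E_\Lambda$.

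For the first assertion, that $\Lambda$ has a leaf joining two fixed points of $f$, I would proceed as follows. By the previous paragraph, pick a component $I = (a,b)$ of $S^1 \setminus \Fix_f$ with, say, $a \in E_\Lambda$, and let $l$ be a leaf with $a$ as an endpoint. Now apply the iteration argument: replace $l$ by $f^{\pm n}(l)$ so that $a$ is the ``repelling'' side; then the other endpoint of $l$, if it is not fixed, gets pushed monotonically under $f^n$ and must converge (by compactness) to some point of $\Fix_f$. Taking a limit of the leaves $f^n(l)$ — which is again a leaf of $\Lambda$ since $\Lambda$ is closed and $f$-invariant — yields a leaf with both endpoints in $\Fix_f$. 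Here is where I would use $|\Fix_f| \ge 3$: with at least three fixed points there is genuine room for the orbit of the free endpoint to accumulate at a fixed point distinct from $a$, so that the limiting leaf is nondegenerate. (If $|\Fix_f| = 2$ and $f$ is ``hyperbolic-like'' the limiting leaf could in principle degenerate; with three or more fixed points the monotone orbit of the moving endpoint is trapped in one complementary interval and limits to an endpoint of that interval, which is a fixed point $\neq a$.)

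The main obstacle I anticipate is the careful bookkeeping of ``which side is attracting'': $f$ restricted to each complementary interval of $\Fix_f$ has a well-defined direction of motion, but these directions vary from interval to interval, so I must be careful when I say ``apply $f^n$ or $f^{-n}$'' to ensure I am using the power that makes the relevant endpoint behave as a repeller (so that nearby leaves spread out rather than collapse). A secondary technical point is handling the case where the moving endpoint of $l$ is \emph{already} a fixed point — then we are immediately done — versus the generic case where it converges to one; and confirming that the limit leaf is not degenerate, which is exactly the role played by the hypothesis $|\Fix_f|\ge 3$. Once the direction conventions are pinned down, both the rainbow-contradiction step and the limit-of-leaves step are short.
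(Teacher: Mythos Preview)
Your argument for the ``moreover'' clause contains a genuine gap. You claim that by applying high powers of $f$ to two leaves $l_k, l_{k'}$ from the rainbow at $a$, ``their images become linked.'' But $f^{\mp n}$ is a homeomorphism of $S^1$, so it carries unlinked pairs to unlinked pairs; and since $\Lambda$ is $f$-invariant, any images $f^m(l_k)$ and $f^{m'}(l_{k'})$ (even for different powers) all lie in $\Lambda$ and hence remain pairwise unlinked. No contradiction can arise this way.

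The fix is already present in your second paragraph: the limit-of-iterated-leaves argument. The paper applies exactly this idea, but directly to a single leaf $l$ taken from the rainbow at $a$ (assuming $a \notin E_\Lambda$). One end of $l$ lies in $I$ and the other end $d$ lies in the adjacent complementary interval on the other side of $a$; iterating by $f^n$ or $f^{-n}$ drives these two ends toward fixed points, and the paper observes that the limit leaf connects $b$ to some fixed point $c$ on the far side. This single step proves \emph{both} assertions simultaneously: $\Lambda$ contains a leaf joining two fixed points, and $b \in E_\Lambda$. So rather than first establishing $a \in E_\Lambda$ via linking and then iterating a leaf through $a$, you should iterate a rainbow leaf from the outset. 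Your instinct about the bookkeeping is correct: one must check that for a suitable sign of the iterate, neither endpoint of $f^{\pm n}(l)$ converges back to $a$ (which would either degenerate the leaf or contradict $a\notin E_\Lambda$), and this is where the hypothesis $|\Fix_f|\ge 3$ guarantees a genuine third fixed point $c$ for the exterior end to land on.
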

\begin{proof} 
 Let $I$ be a connected component of $S^1 \setminus \Fix_f$ with endpoints $a$ and $b$. Since $\Fix_f$ has at least three points, one can take $c \in \Fix_f \setminus \{a, b\}$. Relabeling $a$ and $b$ if necessary, we may assume that the triple $a, b, c$ are counterclockwise oriented. 
 
 Suppose $a$ is not an endpoint of a leaf of $\Lambda$. Then there exists a rainbow in $\Lambda$ at $a$ by Theorem \ref{thm:rainbow}. In particular, there exists a leaf $l$ such that one end of $l$ lies in $I$ and the other end lies outside $I$, call the second one $d$.  If $d$ is a fixed point of $f$, then replace $c$ by $d$. Otherwise, we may assume that $a, c, d$ are counterclockwise oriented and there is no fixed point of $f$ between $c$ and $d$ after replacing $c$ by another fixed point if necessary. Clearly, either $f^n(l)$ or $f^{-n}(l)$ converges to the leaf connecting $b$ and $c$ (may not be the same $c$ as the $c$ at the beginning). 
This proves the lemma. 
\end{proof} 

\begin{cor}
\label{cor:atmost2fixedpoints}
Let $G$ be a pants-like $\COL_3$ group. Then for any $g \in G$, one must have $|\Fix_g| \le 2$. 
\end{cor}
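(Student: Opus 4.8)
The plan is to argue by contradiction: suppose $g \in G$ has $|\Fix_g| \ge 3$. The strategy is to use the three very-full laminations $\Lambda_1, \Lambda_2, \Lambda_3$ together with Lemma \ref{lem:leafconnectingfixedpoints} to produce leaves with shared endpoints that violate the pants-like condition. First I would apply Lemma \ref{lem:leafconnectingfixedpoints} to $g$ acting on each $\Lambda_i$: each $\Lambda_i$ contains a leaf joining two fixed points of $g$, and for each component $I$ of $S^1 \setminus \Fix_g$ with endpoints $a, b$, at least one of $a, b$ is an endpoint of a leaf of $\Lambda_i$. The key combinatorial observation is that there are only finitely many candidate "short" components to look at near a single fixed point, so by pigeonhole two of the three laminations will be forced to have leaves sharing an endpoint at some point of $\Fix_g$.

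More precisely, I would fix a connected component $I = (a,b)$ of $S^1 \setminus \Fix_g$ (note $\Fix_g$ is a closed, hence finite or Cantor-type, set; with $|\Fix_g| \ge 3$ there are at least three components, but we only need three). For each $i \in \{1,2,3\}$, Lemma \ref{lem:leafconnectingfixedpoints} gives that $a \in E(\Lambda_i)$ or $b \in E(\Lambda_i)$. So each $\Lambda_i$ "uses" one of the two endpoints $\{a, b\}$. By pigeonhole on three laminations and two endpoints, at least two laminations, say $\Lambda_i$ and $\Lambda_j$ with $i \ne j$, have a leaf with a common endpoint — say both have a leaf ending at $a$. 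By the pants-like hypothesis, $E(\Lambda_i) \cap E(\Lambda_j)$ consists only of cusp points of $G$, so $a$ must be the fixed point of a parabolic element $h$ of $G$. The contradiction then comes from the fact that $a$ is also a fixed point of $g$, which has $|\Fix_g| \ge 3$: I would show a point cannot simultaneously be a parabolic fixed point and lie in a $\Fix_g$ of size $\ge 3$ in a way compatible with the laminations. Concretely, $a$ being parabolic-fixed forces (by Corollary \ref{cor:parabolicfixedpoints}) infinitely many leaves of each $\Lambda_i$ at $a$; combined with $g$ fixing $a$ and having another fixed point, one gets leaves joining $a$ to other fixed points of $g$ in both $\Lambda_i$ and $\Lambda_j$ (via Lemma \ref{lem:leafconnectingfixedpoints}, the iterate-convergence argument pushing the rainbow leaf onto a leaf connecting two fixed points), and these again share the endpoint $a$, forcing $a$ parabolic — but more carefully one wants to derive an outright impossibility, e.g.\ by choosing the component $I$ so that $a$ cannot be parabolic, or by using that a parabolic element and $g$ cannot share a fixed point if $g$ has a third fixed point (the group elements would generate something non-discrete-like, contradicting the structure forced by three transverse very-full laminations).

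The cleanest route, which I would try to push through, avoids the last delicate point: instead of one component $I$, use \emph{two} distinct components $I_1 = (a_1, b_1)$ and $I_2 = (a_2, b_2)$ of $S^1 \setminus \Fix_g$ whose endpoints are four \emph{distinct} points of $\Fix_g$ (possible since $|\Fix_g| \ge 3$ gives at least three components, and one can pick two of them sharing no endpoint when $|\Fix_g|\ge 4$; when $|\Fix_g| = 3$ one handles it directly since then $g$ or a power is pseudo-Anosov-like or has an odd fixed-point count, already an anomaly). For each lamination and each component we get a "used" endpoint; tracking which of the (up to four) fixed points $a_1, b_1, a_2, b_2$ each of the three laminations uses, and invoking Lemma \ref{lem:hyperbolicend}/Lemma \ref{lem:leafconnectingfixedpoints} to upgrade "used endpoint" to "leaf connecting two fixed points of $g$," I expect to force two laminations to contain leaves sharing a non-cusp endpoint, contradicting pants-like.

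The main obstacle I anticipate is the bookkeeping in the case $|\Fix_g| = 3$ (or more generally small odd cardinality), where there are exactly three components and the endpoints are not all distinct across two components, so the naive pigeonhole is tight and one must use the dynamics (a fixed point that is attracting on one side and repelling on the other — the non-hyperbolic "degenerate" fixed points that must occur when $|\Fix_g|$ is odd) to get extra leaves. Handling that degenerate fixed point carefully — showing it forces an endpoint shared between two laminations that is not a cusp — is the crux; everything else is a pigeonhole argument over a finite configuration combined with the iterate-pushing trick from Lemma \ref{lem:leafconnectingfixedpoints}.
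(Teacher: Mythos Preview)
Your opening approach---pigeonhole on three laminations against the two endpoints $a,b$ of a single component $I$ of $S^1\setminus\Fix_g$, forcing a shared endpoint that must then be a cusp point---is exactly what the paper does. Where you lose the thread is in finishing off the cusp case: you correctly invoke Corollary~\ref{cor:parabolicfixedpoints} to get infinitely many leaves of each $\Lambda_i$ ending at $a$, and you correctly observe that iterating $g$ pushes such a leaf onto a leaf joining $a$ to another fixed point of $g$. What you miss is that you can (and should) choose the leaf so that its other end lies \emph{inside $I$}: since the $h$-orbit of the non-$a$ endpoints accumulates on $a$ from both sides, some iterate lands in $I$. Then $g^{\pm n}$ applied to that leaf converges to the specific leaf $(a,b)$, not just to ``$a$ and some fixed point.'' This gives the \emph{same} leaf $(a,b)$ in every $\Lambda_i$, and now you invoke pairwise \emph{transversality} (no common leaf), not the pants-like condition. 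That is the outright impossibility you were looking for; there is no circularity and no need to argue that $a$ cannot be parabolic by other means.

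Once you see this, your third and fourth paragraphs become unnecessary detours. The argument works uniformly for any $|\Fix_g|\ge 3$; there is no special difficulty at $|\Fix_g|=3$, no need to pick two components with four distinct endpoints, and no need to analyze ``degenerate'' fixed-point dynamics. The single-component pigeonhole plus the cusp-case iterate argument is the whole proof.
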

\begin{proof} Let $\{ \Lambda_1, \Lambda_2, \Lambda_3 \}$ be a pants-like collection of $G$-invariant laminations. Suppose that there exists an element $g$ of $G$ which has at least three fixed points on $S^1$. Let $I$ be a connected component of $S^1 \setminus \Fix_g$ with endpoints $a$ and $b$. Then by Lemma \ref{lem:leafconnectingfixedpoints}, each of $a$ and $b$ is an endpoint of a leaf of some $\Lambda_i$. Hence, if none of $a, b$ is the fixed point of a parabolic element of $G$, we get a contradiction to the pants-like property. 

 Suppose $a$ is the fixed point of a parabolic element $h \in G$. By Corollary \ref{cor:parabolicfixedpoints} (or rather the proof of it), there must be a leaf $l$ of $\Lambda_i$ for any choice of $i$ such that one end of $l$ is $a$ and the other end lies in $I$. Then either $g^n(l)$ or $g^{-n}(l)$ converges to the leaf connecting $a$ and $b$ as $n$ increases. Hence each $\Lambda_i$ must have the leaf connecting $a$ and $b$, contradicting to the transversality. Similarly $b$ cannot be a cusp point either. This completes the proof. 
\end{proof}

\begin{lem} 
\label{lem:separatinghyperbolicfixedpoints}
Let $G$ be a group acting on $S^1$ and $\Lambda$ be a very-full $G$-invariant lamination. 
For each hyperbolic element $g \in G$ with fixed points $a$ and $b$, if $\Lambda$ does not have $(a,b)$ as a leaf, there must be leaf $l$ of $\Lambda_{\alpha}$ which separate $a, b$ (ie., not both endpoints of $l$ lies in the same connected component of $S^1 \setminus \{a, b\}$). 
\end{lem}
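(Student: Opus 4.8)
The plan is to argue by contradiction. Suppose $(a,b)$ is not a leaf of $\Lambda$ and that no leaf of $\Lambda$ separates $a$ from $b$; equivalently, every leaf of $\Lambda$ has both of its endpoints in one of the two open arcs $I_1,I_2$ of $S^1\setminus\{a,b\}$. We want to reach a contradiction.

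The first step is to note that, since $g$ is hyperbolic, its fixed-point set is exactly $\{a,b\}$. Hence Lemma~\ref{lem:hyperbolicend}, applied to $g$, says: if $\Lambda$ contains any leaf one of whose ends is $a$ (respectively $b$), then $\Lambda$ contains a leaf joining two fixed points of $g$, and the only candidate for such a leaf is $(a,b)$. Since we are assuming $(a,b)\notin\Lambda$, this forces that no leaf of $\Lambda$ has $a$ as an endpoint; in particular $a\notin E_\Lambda$. (Concretely, the mechanism behind Lemma~\ref{lem:hyperbolicend} is that for a leaf $l$ with end $a$ the iterates $g^{\mp n}(l)$ converge to the pair $(a,b)$, which is nondegenerate because $a\ne b$ and hence lies in the closed set $\Lambda\subset\mathcal{M}$.)

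The second step invokes Theorem~\ref{thm:rainbow}: because $\Lambda$ is very full and $a\notin E_\Lambda$, the point $a$ has a rainbow, i.e. there is a sequence of leaves $l_n=(x_n,y_n)$ of $\Lambda$ with $x_n\to a$ and $y_n\to a$ from opposite sides of $a$ in $S^1$. Since $a\ne b$, the two local sides of $a$ lie in $I_1$ and $I_2$ respectively, so for all sufficiently large $n$ we may assume $x_n\in I_1$ and $y_n\in I_2$. But then $l_n$ is a leaf of $\Lambda$ whose two endpoints lie in different components of $S^1\setminus\{a,b\}$, i.e. $l_n$ separates $a$ and $b$, contradicting our standing assumption. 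This contradiction proves the lemma.

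There is no real obstacle here beyond assembling the two prior results correctly: the one point to be careful about is the use of Lemma~\ref{lem:hyperbolicend}, where it is essential that a hyperbolic element has \emph{no} fixed points other than $a$ and $b$, so that "a leaf joining two fixed points of $g$" can only be $(a,b)$. Everything else is a direct reading of the definition of rainbow together with $a\ne b$.
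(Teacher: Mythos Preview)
Your proof is correct and follows essentially the same route as the paper, which simply says ``This is just an observation using the existence of a rainbow.'' You have unpacked that observation carefully: first using Lemma~\ref{lem:hyperbolicend} (and the fact that a hyperbolic element has exactly the two fixed points $a,b$) to conclude $a\notin E_\Lambda$, and then invoking Theorem~\ref{thm:rainbow} to produce a rainbow leaf at $a$, which necessarily has its two ends in the two different components of $S^1\setminus\{a,b\}$. The contradiction framing is harmless but unnecessary; the argument is really direct.
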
 
\begin{proof}
This is just an observation using the existences of a rainbow. 
\end{proof} 

\begin{lem} 
\label{lem:parabolichyperbolic} 
 Let $G$ be a pants-like $\COL_3$ group. If $f \in G$ is parabolic, then its fixed point is a parabolic fixed point, ie., the fixed points behaves as a sink on the one side and as a source on the other side. If $f$ is hyperbolic, it has one attracting and one repelling fixed points, ie., it has North pole-South pole dynamics. 
\end{lem}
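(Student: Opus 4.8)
The plan is to analyze each case (parabolic $f$ and hyperbolic $f$) by looking at how $f$ acts near its fixed point(s) and using the rainbow dichotomy (Theorem~\ref{thm:rainbow}) together with the pants-like structure of the three invariant laminations. The overall strategy is to rule out the ``bad'' local dynamics --- a parabolic fixed point that is a sink on both sides (or source on both sides), and a hyperbolic fixed point pair that is not of sink/source type --- by producing, in each bad scenario, a leaf whose $f$-orbit either becomes linked or forces a common leaf shared by two of the $\Lambda_i$, contradicting transversality.

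First I would treat the parabolic case. Let $f\in G$ have $\Fix_f=\{x\}$. By Corollary~\ref{cor:parabolicfixedpoints}, each $\Lambda_i$ has infinitely many leaves with $x$ as an endpoint. Pick such a leaf $l=(x,y)$ in some $\Lambda_i$. The orbit $(f^n(l))_{n\in\ZZ}$ consists of leaves $(x, f^n(y))$, all sharing the endpoint $x$, so unlinkedness is automatic; the relevant constraint instead is that $f^n(y)$ must stay monotone along the complement $S^1\setminus\{x\}$, which it does since $f$ is an orientation-preserving homeomorphism fixing only $x$. The point is: if $x$ were a sink from both sides, then $f^n(y)\to x$ as $n\to+\infty$ from whichever side $y$ lies on; but then consider a leaf $l'=(x,y')$ with $y'$ on the \emph{other} side of $x$ --- its forward iterates also converge to $x$ from that other side, so the orbit closure of $\{l,l'\}$ contains a rainbow at $x$, and by Theorem~\ref{thm:rainbow} this is incompatible with $x\in E_{\Lambda_i}$, a contradiction. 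This is essentially the argument already used inside the proof of Corollary~\ref{cor:parabolicfixedpoints}: a ``two-sided sink'' (or source) would collapse a fundamental-domain arc to the point $x$, which a homeomorphism cannot do. So $x$ must be a sink on one side and a source on the other, which is exactly the asserted parabolic dynamics.

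Next the hyperbolic case. By hypothesis $f$ is hyperbolic in the sense of the earlier definition only if we already know $|\Fix_f|=2$ with one attracting, one repelling; but here I read the claim as: if $|\Fix_f|=2$, then the two fixed points cannot be both attracting or both repelling (nor one of the degenerate semi-stable types), i.e. it must have North--South dynamics. Note $|\Fix_f|\le 2$ is guaranteed by Corollary~\ref{cor:atmost2fixedpoints}. Write $\Fix_f=\{a,b\}$. If $(a,b)$ is a leaf of some $\Lambda_i$, then applying Lemma~\ref{lem:hyperbolicend}-style reasoning to the two complementary arcs shows the dynamics on each arc must carry points toward one fixed point and away from the other (else the arc is collapsed); more robustly, if neither arc's dynamics is ``sink at one end, source at the other'', then one of the two arcs is fixed pointwise at its ends but has all interior points moving toward $a$ (say) from both sides --- yet the other arc then moves toward $b$ from one side and away from $b$ on the segment adjacent to $a$... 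I would make this precise by a case check on the sign of $f(t)-t$ on each of the two arcs: orientation-preservation plus $\Fix_f=\{a,b\}$ forces $f(t)-t$ to have constant sign on each arc, and the four sign combinations split into ``North--South'' (opposite signs, the good case) versus the three bad ones. For each bad combination I would exhibit a leaf $l$ of some $\Lambda_i$ with an endpoint in the ``collapsing'' arc: by Lemma~\ref{lem:separatinghyperbolicfixedpoints} applied to $\Lambda_i$, either $(a,b)$ is a leaf of $\Lambda_i$ or some leaf of $\Lambda_i$ separates $a$ from $b$; in the latter case that separating leaf $l$ has one end in each arc, and I compute the limit of $f^n(l)$ or $f^{-n}(l)$, which converges to a leaf joining $a$ to $b$ or gets pushed into a linked configuration. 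Since this must hold for every $\Lambda_i$ and $(a,b)$ can lie in at most one of the three (transversality), two of the $\Lambda_i$ are forced to share the leaf $(a,b)$ --- contradiction. Thus only North--South dynamics survives.

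The main obstacle I anticipate is the bookkeeping in the hyperbolic ``bad dynamics'' cases: one has to handle the genuinely two-sided attractor/repeller versus the parabolic-like semi-stable pictures at each of $a$ and $b$ separately, and in each sub-case carefully track whether the iterates of the separating leaf converge to $(a,b)$, converge to a degenerate leaf, or become linked --- and then argue this forces a shared leaf among two laminations. The cleanest route is probably to first dispose of any fixed point that is ``two-sided'' (sink or source on both adjacent arcs) using the collapsing-arc/rainbow argument exactly as in the parabolic case, reducing to: every fixed point of every element of $G$ is one-sided-sink/one-sided-source; then for $|\Fix_f|=2$ the only remaining configuration is automatically North--South, since the orientation-preserving sign pattern must alternate. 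I would structure the write-up that way to avoid enumerating all four sign cases explicitly.
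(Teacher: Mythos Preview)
Your hyperbolic argument is essentially the paper's: pick a lamination $\Lambda_i$ not containing $(a,b)$ (transversality guarantees at least two such), invoke Lemma~\ref{lem:separatinghyperbolicfixedpoints} to get a separating leaf $l$, and observe that if both fixed points are of parabolic type then $l$ and $f(l)$ are linked (equivalently, the iterates force $(a,b)\in\Lambda_i$). That is exactly what the paper does in two lines.

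Two things to fix. First, the parabolic case needs no laminations at all: an orientation-preserving circle homeomorphism with a single fixed point $x$ restricts to a fixed-point-free increasing self-map of $S^1\setminus\{x\}\cong\RR$, so $f(t)-t$ has constant sign and $x$ is automatically a one-sided sink/one-sided source. Your rainbow argument there is not correct as written (every leaf in the orbit of $(x,y)$ still has $x$ as an endpoint, so no rainbow at $x$ can appear), but the case is vacuous anyway; the paper just calls it ``an obvious observation.'' Second, and more seriously, your final ``cleanest route'' paragraph has the dichotomy inverted. When $|\Fix_f|=2$, North--South dynamics means precisely that one fixed point is a \emph{two-sided} sink and the other a \emph{two-sided} source; the bad case you must exclude is when \emph{both} fixed points are one-sided (parabolic-type). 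So ``disposing of two-sided fixed points'' would throw away the good configuration, and ``every fixed point is one-sided'' is the failure mode, not the goal. Relatedly, of the four sign patterns for $f(t)-t$ on the two arcs, two (opposite signs) give North--South and two (same signs) give the both-parabolic picture, not one versus three. Drop that last paragraph and keep your earlier argument, which is already the right one.
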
 
\begin{proof}
For the parabolic case, it is an obvious observation. Suppose $f$ is hyperbolic. The only way of not having North pole-South pole dynamics is that both fixed points are parabolic fixed points. But we have $G$-invariant laminations with no leaves connecting the fixed points of $g$ (by the transversality, all but at most one lamination are like that. See Lemma \ref{lem:fixedpointisnotendpoint}). 
 For each of those laminations, there must be a leaf connecting two components of the complement of the fixed points by Lemma \ref{lem:separatinghyperbolicfixedpoints}. They cannot stay unlinked under $f$ if both fixed points are parabolic. 
\end{proof} 

\begin{lem} 
\label{lem:ellipticistorstion}
Let $G$ be a pants-like $\COL_3$ group. Any elliptic element of $G$ is of finite order.
\end{lem}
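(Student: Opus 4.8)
The plan is to run the argument on the rotation number of the given elliptic element. Let $g \in G$ be elliptic, so $\Fix_g = \emptyset$ and in particular $g \neq \id$ (the identity fixes every point, so it is not elliptic). It suffices to rule out $g$ having infinite order, and I would split according to whether $g$ has a periodic point on $S^1$, equivalently whether its rotation number is rational.

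\emph{Case 1: $g$ has a periodic point.} Then some $g^q$ with $q \ge 1$ has a fixed point; since $g$ itself is fixed-point-free, any such $q$ is $\ge 2$, so fix one. If $g^q = \id$ then $g$ has finite order and we are done, so assume $g^q \neq \id$. By Corollary~\ref{cor:atmost2fixedpoints} (applied to the non-identity element $g^q$), $1 \le |\Fix_{g^q}| \le 2$. Since $g$ commutes with $g^q$ it preserves the finite set $\Fix_{g^q}$, and being fixed-point-free it acts there without fixed points; hence $|\Fix_{g^q}| = 2$, say $\Fix_{g^q} = \{a,b\}$, and $g$ interchanges $a$ and $b$. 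Then $g^2$ fixes both $a$ and $b$. If $g^2 = \id$ we are done; otherwise Corollary~\ref{cor:atmost2fixedpoints} applied to $g^2$ gives $\Fix_{g^2} = \{a,b\}$ exactly, so $g^2$ has two fixed points and Lemma~\ref{lem:parabolichyperbolic} forces North--South dynamics, giving $g^2$ a unique attracting fixed point, say $a$. But conjugation by $g$ fixes $g^2$ and must carry its attracting fixed point to itself, so $g(a)=a$, contradicting that $g$ is elliptic. Hence in this case $g$ has finite order.

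\emph{Case 2: $g$ has no periodic point.} Then the rotation number $\tau = \tau(g)$ is irrational, and by the classical theory of rotation numbers (see, e.g., \cite{Ghys01}) there is a monotone degree-one map $h\colon S^1\to S^1$ with $h\circ g = R_{\tau}\circ h$, $R_\tau$ the rigid rotation by $\tau$. Let $\Lambda$ be any of the three very-full $G$-invariant laminations (it is $g$-invariant). For a leaf $(x,y)\in\Lambda$ the pairs $\{g^n(x,y)\}_{n\in\ZZ}$ are pairwise unlinked, hence so are their $h$-images $\{(R_\tau^n h(x), R_\tau^n h(y))\}_{n\in\ZZ}$; but, exactly as in the proof of Lemma~\ref{lem:irrationalrotation}, an irrational rotation sends any non-degenerate pair to a linked pair under some iterate, so $h(x)=h(y)$. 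Thus every leaf of $\Lambda$ has both ends inside one fibre of $h$; each non-trivial fibre of $h$ is a proper closed sub-arc of $S^1$ and there are only countably many. If $h$ were a homeomorphism this would make $\Lambda$ empty, impossible since a very-full lamination cannot have the whole disk as a complementary region; so $h$ has a non-trivial fibre, and since $S^1$ is not a countable disjoint union of proper closed arcs, some point $p\in S^1$ lies in no non-trivial fibre of $h$. Then $p\notin E_\Lambda$, so Theorem~\ref{thm:rainbow} gives a rainbow at $p$: leaves $(x_k,y_k)\in\Lambda$ with $x_k\to p$ and $y_k\to p$ from opposite sides. Each $(x_k,y_k)$ lies in a non-trivial fibre $F_k$, a proper closed arc containing $x_k$ and $y_k$, which are separated by $p$; since $p\notin F_k$, the connected set $F_k$ must contain the \emph{long} arc bounded by $x_k$ and $y_k$, whose length tends to $2\pi$. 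Passing to a subsequence the $F_k$ are pairwise distinct (if infinitely many equalled one fibre $F$ then $p=\lim x_k\in F$, a contradiction), hence pairwise disjoint; but $S^1$ cannot contain two disjoint closed arcs of length exceeding $\pi$. This contradiction shows Case 2 cannot occur, and the lemma follows.

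I expect Case 2 to be the crux: a fixed-point-free element of infinite order \emph{can} preserve a (non-full) lamination — this is precisely the Denjoy-type picture of Section~\ref{sec:col12inf} — so the argument must use very-fullness essentially, and the delicate point is that a priori the leaves of $\Lambda$ could be spread over infinitely many small collapsed arcs of $h$; it is exactly Theorem~\ref{thm:rainbow} together with the length bound on disjoint gaps that excludes this. Case 1 is comparatively routine given Corollary~\ref{cor:atmost2fixedpoints} and Lemma~\ref{lem:parabolichyperbolic}. (Note that Case 2 only uses one very-full $G$-invariant lamination, while Case 1 is where the pants-like $\COL_3$ hypothesis enters, through those two results.)
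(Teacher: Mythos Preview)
Your proof is correct and follows the same two-case split on rotation number that the paper uses. In Case~1 you pass to $g^2$ where the paper works directly with $f^n$, but the mechanism is identical: Corollary~\ref{cor:atmost2fixedpoints} bounds the fixed set, and Lemma~\ref{lem:parabolichyperbolic} forces North--South dynamics, which is incompatible with $g$ swapping the two fixed points. In Case~2 the paper simply asserts that an invariant lamination for a Denjoy-type map ``should be supported by the blown-up orbit'' and ``cannot be very full''; you supply the missing argument, first showing via the semiconjugacy that every leaf has both endpoints in a single collapsed arc, then using Theorem~\ref{thm:rainbow} at a point outside all such arcs together with the length bound on disjoint arcs to reach a contradiction. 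So your Case~2 is a genuine fleshing-out of what the paper leaves as a sketch, and your observation that only one very-full invariant lamination is needed there (the full pants-like $\COL_3$ hypothesis entering only through Corollary~\ref{cor:atmost2fixedpoints} and Lemma~\ref{lem:parabolichyperbolic} in Case~1) is accurate.
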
 
\begin{proof} 
 Let $f \in G$ be an elliptic element. If its rotation number is rational, then some power $f^n$ of $f$ must have fixed points. By Corollary \ref{cor:atmost2fixedpoints}, $\Fix_{f^n}$ has either one or two points unless $f^n$ is identity. Suppose first $f^n$ has only one fixed point. Then $f$ must have one fixed point too, contradicting to the assumption. Thus $f^n$ has two fixed points. But since $f$ has no fixed points, both fixed points must be parabolic fixed points, which contradicts to Lemma \ref{lem:parabolichyperbolic}. 
 Hence the only possibility is $f^n = Id$, so $f$ is of finite order. 

 Suppose $f$ has irrational rotation number. It cannot be conjugate to a rigid rotation with irrational angle, since any irrational rotation has no invariant lamination at all as we observed before. 
 So $f$ must be semiconjugate to a irrational rotation, say $R$. We may assume that the action of $f$ on $S^1$ is obtained by Denjoy blow-up for one or several orbits under $R$. Any invariant lamination should be supported by the blown-up orbit. But such a lamination cannot be very full. 
 Hence $f$ cannot have an irrational rotation number. 
\end{proof} 

\begin{thm} 
\label{thm:classification} 
Suppose $G \subset \Homeop(S^1)$ is a pants-like $\COL_3$ group. 

Then each element of $G$ is either a torsion, parabolic, or hyperbolic element. 
\end{thm}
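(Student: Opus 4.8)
The plan is to combine the classification of fixed-point behavior established in the preceding lemmas with the dichotomy on the number of fixed points. First I would invoke Corollary \ref{cor:atmost2fixedpoints}: for any $g \in G$ we have $|\Fix_g| \le 2$, so there are only three cases to treat according to whether $|\Fix_g|$ equals $0$, $1$, or $2$.

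If $|\Fix_g| = 2$, then I would argue that $g$ is hyperbolic. The only alternative, given two fixed points, is that the dynamics near the two fixed points are not of North-South type, which (since $g$ is orientation-preserving) forces both fixed points to be parabolic-type (sink on one side, source on the other, or two semi-stable points). Lemma \ref{lem:parabolichyperbolic} rules this out directly, so $g$ must be hyperbolic. If $|\Fix_g| = 1$, I would show $g$ is parabolic: by definition it has a single fixed point, and Lemma \ref{lem:parabolichyperbolic} tells us this point is a genuine parabolic fixed point (sink on one side, source on the other), so $g$ is a parabolic element in the sense of the definition in Section 2.

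The remaining case is $|\Fix_g| = 0$, i.e. $g$ is an elliptic element. Here I would apply Lemma \ref{lem:ellipticistorstion}, which says every elliptic element of a pants-like $\COL_3$ group has finite order; hence $g$ is a torsion element. Putting the three cases together gives that every element of $G$ is torsion, parabolic, or hyperbolic, which is the statement of the theorem.

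I do not expect any serious obstacle here: the theorem is essentially a bookkeeping corollary that packages Corollary \ref{cor:atmost2fixedpoints}, Lemma \ref{lem:parabolichyperbolic}, and Lemma \ref{lem:ellipticistorstion}. The only mild subtlety is making sure the case $|\Fix_g|=2$ with non-North-South dynamics is correctly identified with "both fixed points parabolic" before feeding it to Lemma \ref{lem:parabolichyperbolic}; this is an elementary observation about orientation-preserving circle homeomorphisms with exactly two fixed points, since the dynamics on each of the two complementary arcs is monotone and fixed-point-free, so each endpoint is either attracting or repelling from that side, and the only configurations are (attracting, repelling) — hyperbolic — or the two parabolic-type configurations excluded by the lemma.
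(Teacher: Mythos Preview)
Your proposal is correct and follows exactly the paper's approach: the paper's proof is the single sentence ``This follows from Corollary \ref{cor:atmost2fixedpoints}, Lemma \ref{lem:parabolichyperbolic} and Lemma \ref{lem:ellipticistorstion},'' and you have simply unpacked how these three results combine across the cases $|\Fix_g| = 0, 1, 2$. Your added remark about why two fixed points without North--South dynamics forces both to be parabolic-type is precisely the observation used inside the proof of Lemma \ref{lem:parabolichyperbolic}, so nothing new is needed.
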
 
\begin{proof} 
 This follows from Corollary \ref{cor:atmost2fixedpoints}, Lemma \ref{lem:parabolichyperbolic} and Lemma \ref{lem:ellipticistorstion}. 
\end{proof}

Theorem \ref{thm:classification} provides a classification of elements of pants-like $\COL_3$ groups just like the one for Fuchsian groups. In fact, this is not a coincidence. 
It is hard in general to extend the action to the interior of $\DD$. Instead, we will try to show that pants-like $\COL_3$ groups are convergence groups. Convergence group theorem says that a  group acting faithfully on the circle is a convergence group if and only if it is a Fuchsian group (this theorem was proved for a large class of groups in \cite{Tukia88}, and in full generality in \cite{Gabai91} and \cite{CassonJungreis94}). For the general background for convergence groups, see \cite{Tukia88}.


\section{Pants-like $\COL_3$ Groups are Fuchsian Groups}
\label{sec:convergencegroup}

For general reference, we state the following well-known lemma without proof.
\begin{lem} 
\label{lem:notproperlydiscontinuous}
Let $G$ be a group acting on a space $X$. Let $K$ be a compact subset of $X$ such that $g(K) \cap K \neq \emptyset$ for infinitely many $g_i$. Then there exist a sequence $(x_i)$ in $K$ converging to $x$ and a sequence of the set $\{(g_i)\}$, also call it $(g_i)$ for abusing the notation, such that $g_i(x_i)$ converges to a point $x'$ in $K$.  
\end{lem}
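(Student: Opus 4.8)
The plan is to extract the two sequences directly from the hypothesis by a subsequence argument, using that the compact set $K$ is sequentially compact (all ambient spaces occurring in this paper --- $S^1$, $\mathcal{M}$, $\overline{\mathcal{M}}$, and finite products thereof --- are metrizable). First I would fix a sequence $(g_i)$ of distinct elements of $G$ with $g_i(K) \cap K \neq \emptyset$, which exists by hypothesis. For each $i$, choose a point $z_i \in g_i(K) \cap K$. Then $z_i \in K$, and since $z_i \in g_i(K)$ there is some $x_i \in K$ with $g_i(x_i) = z_i$.

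Now $(x_i)$ and $(z_i)$ are both sequences in the compact set $K$. Passing to a subsequence, I may assume $x_i \to x$ for some $x \in K$; passing to a further subsequence, I may assume $z_i = g_i(x_i) \to x'$ for some $x' \in K$, where the limit lies in $K$ because $K$, being compact in a Hausdorff space, is closed. Relabeling this subsequence again as $(g_i)$ (as the statement permits), we obtain a sequence $(x_i)$ in $K$ with $x_i \to x \in K$ and $g_i(x_i) \to x' \in K$, which is exactly the assertion.

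The only step that requires any care is the double passage to convergent subsequences, and this is legitimate precisely because $K$ is sequentially compact in the metrizable setting in which the lemma will be applied; beyond that there is no real obstacle, since the statement is essentially a reformulation of the fact that a failure of proper discontinuity on $K$ forces a convergent source--image configuration inside $K$. In a purely topological (non-metrizable) setting one would instead work with nets or subnets, but that generality is not needed here.
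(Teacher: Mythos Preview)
Your argument is correct: choosing $x_i \in K$ with $g_i(x_i) \in K$ and then passing twice to convergent subsequences in the (sequentially) compact set $K$ is exactly the standard proof, and your remark about metrizability is the appropriate caveat. The paper itself does not supply a proof of this lemma---it is stated there as a well-known fact for reference only---so there is nothing to compare against; your write-up simply fills in what the paper omits.
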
 

Let $G$ be a discrete subgroup of $\Homeop(S^1)$. Then $G$ is said to have the \emph{convergence property} if for an arbitrary infinite sequence of distinct elements $(g_i)$ of $G$, there exists two points $a, b \in S^1$ (not necessarily distinct) and a subsequence $(g_{i_j})$ of $(g_i)$ so that $g_{i_j}$ converges to $a$ uniformly on compacts subsets of $S^1 \setminus \{b\}$. If $G$ has the convergence property, then we say $G$ is a \emph{convergence group}. 

Let $T$ be the space of ordered triples of three distinct elements of $S^1$. By Theorem 4.A. in \cite{Tukia88}, a group $G \subset \Homeo(S^1)$ is a convergence group if and only if it acts on $T$ properly discontinuously. If one looks at the proof of this theorem, we do not really use the group operation. Hence we get the following statement from the exactly same proof.
\begin{prop} 
 Let $C$ be a set of homeomorphisms of $S^1$. $C$ has the convergence property if and only if $C$ acts on $T$ properly discontinuously. 
\end{prop}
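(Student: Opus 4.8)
The plan is to reproduce the proof of Theorem~4.A of \cite{Tukia88}, checking at every step that the group structure of $G$ is never used: each step manipulates the members of $C$ — and their inverse maps, which exist simply because they are homeomorphisms — purely as self-maps of $S^1$, never as something one composes or inverts inside $C$. Recall that ``$C$ acts on $T$ properly discontinuously'' means that for every compact $K\subseteq T$ the set $\{g\in C: g(K)\cap K\neq\emptyset\}$ is finite. The direction ``convergence property $\Rightarrow$ proper discontinuity'' is the short one, and I would do it by contradiction using Lemma~\ref{lem:notproperlydiscontinuous}: if $C$ is not properly discontinuous on $T$, pick a compact $K\subseteq T$ and infinitely many distinct $g_i\in C$ with $g_i(K)\cap K\neq\emptyset$; the lemma yields $\tau_i\to\tau=(x_1,x_2,x_3)\in K$ and $g_i(\tau_i)\to\tau'=(y_1,y_2,y_3)\in K$, both nondegenerate. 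Applying the convergence property to $(g_i)$ gives a subsequence and $a,b\in S^1$ with $g_i\to a$ uniformly on compacts of $S^1\setminus\{b\}$; since at most one $x_k$ equals $b$, two of the coordinates of $\tau_i$ eventually lie in a fixed compact subset of $S^1\setminus\{b\}$, so the corresponding two coordinates of $g_i(\tau_i)$ both tend to $a$, contradicting that $y_1,y_2,y_3$ are distinct.

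The substantive direction is ``proper discontinuity $\Rightarrow$ convergence property.'' The first point is that proper discontinuity forces, for every fixed triple $\tau\in T$, the set $\{g(\tau):g\in C\}$ to leave every compact subset of $T$ (otherwise a compact $K$ containing $\tau$ would meet infinitely many of its translates). Given an infinite sequence $(g_i)$ of distinct elements of $C$ and a countable dense set $D\subseteq S^1$, I would pass to a subsequence of constant orientation type and then, by a diagonal argument, to a further subsequence along which $g_i|_D$ and $g_i^{-1}|_D$ converge pointwise to weakly cyclically monotone maps $\phi$ and $\psi$. If $\overline{\phi(D)}$ contained three distinct points, choosing $p_1,p_2,p_3\in D$ with $\phi(p_1),\phi(p_2),\phi(p_3)$ distinct would make $g_i(p_1,p_2,p_3)$ converge inside $T$, so that $K:=\{(p_1,p_2,p_3)\}\cup\overline{\{g_i(p_1,p_2,p_3):i\}}$ would contradict proper discontinuity; and the two-point case is excluded by the same elementary monotonicity observation as in \cite{Tukia88} (a homeomorphism cannot carry the two arcs cut off by a chord into two disjoint small neighborhoods). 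Hence $\phi\equiv a$ and $\psi\equiv b$ for some $a,b\in S^1$.

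It then remains to upgrade the pointwise statement $g_i(p)\to a$ for $p\in D\setminus\{b\}$ to uniform convergence on compact subsets of $S^1\setminus\{b\}$. Supposing this fails, there are $p_i\to p_*\neq b$ with $g_i(p_i)\to z\neq a$; choosing $p',p''\in D$ straddling $p_*$ and different from $b$, the image arc $g_i([p',p''])$ has both endpoints tending to $a$, so it either shrinks to $\{a\}$ — whence $g_i(p_i)\to a$, a contradiction — or eventually covers all of $S^1$ minus a shrinking neighborhood of $a$, in which case any $q\in D\setminus\{a\}$ lies in $g_i([p',p''])$, so $g_i^{-1}(q)\in[p',p'']$ tends to $p_*$, contradicting $g_i^{-1}(q)\to b=\psi(q)$. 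Either way we reach a contradiction, so $(g_i)$ has the required collapsing subsequence and $C$ has the convergence property.

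I expect the main obstacle to be bookkeeping rather than any new idea: one has to be sure that none of the auxiliary compactness and monotonicity lemmas behind the extraction of the collapsing subsequence in \cite{Tukia88} secretly uses that $G$ is closed under composition or inversion, and one has to insert the harmless reduction to a subsequence of constant orientation type so that ``weakly monotone'' is meaningful for the limit maps $\phi,\psi$. Once these checks are made, the proposition is precisely Tukia's theorem with the word ``group'' deleted, which is exactly what is needed here.
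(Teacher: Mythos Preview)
Your proposal is correct and takes exactly the same approach as the paper: the paper's ``proof'' is nothing more than the one-line observation that the argument for Theorem~4.A in \cite{Tukia88} never uses the group operation, and you have carried out that verification in detail. If anything, your write-up is more careful than the paper's, since you explicitly flag the two places where one must check that using $g_i^{-1}$ (which need not lie in $C$) is harmless and that the orientation-type reduction is needed when $C\not\subseteq\Homeop(S^1)$.
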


For pants-like $\COL_3$ group $G$, we can define its limit set in a similar way as for the case of Fuchsian groups.  
Let $\Omega(G)$ be the set of points of $S^1$ where $G$ acts discontinuously, ie., $\Omega(G) = \{ x \in S^1 : \mbox{ there exists a neighborhood } U \mbox{ of } x \mbox{ such that } g(U) \cap U=\emptyset \mbox{ for all but finitely many } g \in G \}$, and call it \emph{domain of discontinuity} of $G$. Let $L(G) = S^1 \setminus \Omega(G)$ and call it \emph{limit set} of $G$. For our conjecture to have a chance to be true, $\Omega(G)$ and $L(G)$ have the same properties as those for Fuchsian groups. 

For the rest of this section, we fix a torsion-free  pants-like $\COL_3$ group $G$ with a pants-like collection $\{\Lambda_1, \Lambda_2, \Lambda_3 \}$ of $G$-invariant laminations.  
Let $F(G)$ be the set of all fixed points of elements of $G$, ie., $F(G) = \cup_{g\in G} \Fix_g$.  
We do not need the following lemma but it shows another similarity of pants-like $\COL_3$ groups with Fuchsian groups. 
\begin{lem}
$\overline{F(G)}$ is either a finite subset of at most 2 points or an infinite set. 
When $\overline{F(G)}$ is infinite, it is either the entire $S^1$ or a perfect nowhere dense subset of $S^1$. 
\end{lem}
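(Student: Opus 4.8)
The plan is to follow the classical dichotomy for limit sets of Fuchsian (and Kleinian) groups, but to argue entirely on the circle using the laminations rather than the geometry of $\DD$. First I would observe that $\overline{F(G)}$ is a closed $G$-invariant subset of $S^1$: it is closed by definition, and invariance follows because $g \Fix_h g^{-1} = \Fix_{ghg^{-1}}$, so $G$ permutes the fixed-point sets and hence preserves their union and its closure. Next I would dispose of the finite case. If $\overline{F(G)}$ is finite, it is a finite $G$-invariant set, so $G$ acts on it through a finite quotient; since $G$ is torsion-free (and by Theorem~\ref{thm:classification} every nontrivial element is parabolic or hyperbolic, hence has $1$ or $2$ fixed points), a short argument shows $G$ must fix each point of $\overline{F(G)}$ and that $|\overline{F(G)}| \le 2$: three or more globally fixed points would force every element to have $\ge 3$ fixed points, contradicting Corollary~\ref{cor:atmost2fixedpoints}, unless $G$ is trivial, in which case $\overline{F(G)} = \emptyset$.

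Now suppose $\overline{F(G)}$ is infinite; I must show it is perfect, and that if it has nonempty interior then it is all of $S^1$. For the perfectness, let $\Lambda := \overline{F(G)}$ and suppose $p \in \Lambda$ were isolated in $\Lambda$. Since $\Lambda$ is infinite and $G$-invariant, the $G$-orbit of $p$ is infinite (otherwise $G$ stabilizes a finite set as above), so there are infinitely many distinct points $g_n(p)$; passing to a subsequence they converge to some $q \in \Lambda$. I would then push $p$ close to $q$ by such a $g_n$ and use that $p$ is isolated in $\Lambda$ together with the fact that fixed points of parabolics/hyperbolics come in clusters detected by the laminations (Corollary~\ref{cor:parabolicfixedpoints} gives infinitely many leaves at a cusp; Lemma~\ref{lem:hyperbolicend} and the density of the very-full laminations give nearby endpoints hence nearby fixed points) to produce, near $q$, other points of $\Lambda$ arbitrarily close to the isolated point's image, contradicting isolatedness after translating back. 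The cleanest route is: fixed points of $G$ accumulate onto each point of $\overline{F(G)}$ from \emph{both} sides, because any hyperbolic or parabolic $g$ has its attracting dynamics pull a whole neighborhood's worth of other group elements' fixed points toward $\Fix_g$, and there is at least one nontrivial element (so at least one parabolic or hyperbolic fixed point) to seed the orbit. Thus no point of $\overline{F(G)}$ is isolated.

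For the final alternative, assume $\overline{F(G)}$ has nonempty interior, i.e.\ contains an open arc $I$. The complement $S^1 \setminus \overline{F(G)}$ is an open $G$-invariant set, a disjoint union of open arcs; if it is nonempty, pick a maximal complementary arc $J$ with endpoints $a,b \in \overline{F(G)}$. Using the convergence-group machinery of Section~\ref{sec:convergencegroup} — in particular that $G$ acts properly discontinuously on triples and that the limit set $L(G)$ is either $\le 2$ points or a perfect set contained in $\overline{F(G)}$ — together with minimality of the $G$-action on $L(G)$, I would argue that $G$ cannot preserve the arc $I$ of $\overline{F(G)}$ while also having a complementary gap: the dynamics of a hyperbolic element with one endpoint in $I$ would sweep $I$ across the whole circle, forcing $\overline{F(G)} = S^1$. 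Concretely, $\overline{F(G)}$ is a closed invariant set containing an open set; its boundary $\partial \overline{F(G)}$ is then a closed invariant set which is nowhere dense, and by minimality of the action on the (nonelementary) limit set $\partial \overline{F(G)}$ is either empty — giving $\overline{F(G)} = S^1$ — or equal to $L(G)$, and one checks the latter forces $\overline{F(G)}$ to have empty interior, a contradiction. Hence either $\overline{F(G)} = S^1$ or $\overline{F(G)}$ is nowhere dense; combined with perfectness this is exactly the claim.

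The main obstacle I anticipate is the perfectness step: ruling out isolated points of $\overline{F(G)}$ requires knowing that group elements' fixed points genuinely accumulate, which needs either a convergence-group input (proved later in the paper, so citing it here is mild circularity unless the logical order permits) or a self-contained lamination argument showing that a rainbow/cluster of leaves at a fixed point forces nearby fixed points in the orbit. Getting the dichotomy "finite $\le 2$ vs.\ infinite perfect" cleanly, without invoking the full structure theory that Section~\ref{sec:convergencegroup} is building toward, is where the real care is needed; everything else is routine topological dynamics on $S^1$.
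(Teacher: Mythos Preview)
Your outline is essentially correct, but you are working much harder than the paper does. The paper's proof is a few lines and uses no lamination or convergence-group input beyond the already-established fact $|\Fix_g| \le 2$ (Corollary~\ref{cor:atmost2fixedpoints}). The dichotomy ``$\le 2$ points or infinite'' is handled in one stroke: either all nontrivial elements share their fixed-point sets (giving $|\overline{F(G)}| \le 2$ immediately), or there exist $g,h$ with $\Fix_g \neq \Fix_h$, and then for $x \in \Fix_h \setminus \Fix_g$ the points $g^n(x) \in \Fix_{g^n h g^{-n}}$ are pairwise distinct, so $F(G)$ is infinite. For the second clause the paper simply asserts that $\overline{F(G)}$ is a \emph{minimal} closed $G$-invariant set and invokes the standard structure theorem for infinite minimal sets on $S^1$: they are perfect, and either all of $S^1$ or nowhere dense.

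Your route eventually arrives at the same conjugation idea (``attracting dynamics pull \ldots\ other group elements' fixed points toward $\Fix_g$''), but you embed it in a detour through rainbows, Corollary~\ref{cor:parabolicfixedpoints}, and forward references to Section~\ref{sec:convergencegroup}. None of that is needed, so your worry about circularity is moot: the argument is pure elementary dynamics on $S^1$ once one knows $|\Fix_g|\le 2$. The one thing your approach buys is that you make the ``nonempty interior $\Rightarrow S^1$'' step more explicit than the paper's bare appeal to minimality (which the paper does not justify); a clean self-contained version is: if $J$ is a maximal complementary arc and $p$ is an attracting fixed point of some $g$ lying in the interior of $\overline{F(G)}$, then $g^n(J)$ eventually lands inside that interior, contradicting $G$-invariance of the complement.
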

\begin{proof}
When all the elements of $G$ share fixed points, then $|\overline{F(G)}| \le 2$ as $|\Fix_g| \le 2$ for all $g \in G$. If that is not the case, say we have $g, h \in G$ whose fixed point sets are distinct. Then for $x \in \Fix_h \setminus \Fix_g$, $g^n(x)$ are all distinct for $n \in \ZZ$ and $g^n(x)$ is a fixed point of $g^n h g^{-n}$, hence $\overline{F(G)}$ is infinite. 
Note that $\overline{F(G)}$ is a closed minimal $G$-invariant subset of $S^1$. An infinite minimal set under the group action on the circle has no isolated points. Thus $\overline{F(G)}$ is a perfect set. 
\end{proof}

Next lemma itself will not be used to prove our main theorem but the proof is important.
\begin{lem} 
\label{lem:limitsetisfixedpoints}
$L(G) = \overline{F(G)}$. 
\end{lem}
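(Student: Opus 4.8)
The plan is to prove the two inclusions $L(G)\subseteq\overline{F(G)}$ and $\overline{F(G)}\subseteq L(G)$ separately, using the classification of elements from Theorem \ref{thm:classification} together with the ``enough rainbows'' machinery of Section \ref{sec:rainbow}.

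For $\overline{F(G)}\subseteq L(G)$: since $L(G)=S^1\setminus\Omega(G)$ is closed, it suffices to show $F(G)\subseteq L(G)$, i.e.\ that no fixed point of a nontrivial $g\in G$ lies in the domain of discontinuity. If $x\in\Fix_g$ with $g\neq\id$, then $g$ is parabolic or hyperbolic by Theorem \ref{thm:classification}; in either case $x$ is not attracting-and-repelling on both sides, so any neighborhood $U$ of $x$ satisfies $g^n(U)\cap U\neq\emptyset$ for all $n$ (the dynamics near $x$ pushes points toward $x$ on at least one side, or fixes the side, so $x$ itself stays in $g^n(U)\cap U$). Hence $x\notin\Omega(G)$, so $F(G)\subseteq L(G)$ and thus $\overline{F(G)}\subseteq L(G)$.

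For $L(G)\subseteq\overline{F(G)}$: equivalently, $S^1\setminus\overline{F(G)}\subseteq\Omega(G)$. Take $x\notin\overline{F(G)}$; then there is an open arc $I\ni x$ with $\overline{I}\cap F(G)=\emptyset$. Suppose for contradiction $x\notin\Omega(G)$, so there is a sequence of distinct $g_i\in G$ with $g_i(U)\cap U\neq\emptyset$ for every neighborhood $U$ of $x$; shrinking, we may take $U=I$ and apply Lemma \ref{lem:notproperlydiscontinuous} to get $x_i\to x$ in $\overline{I}$ with $g_i(x_i)\to x'\in\overline{I}$. The idea is now to feed this into a rainbow/lamination argument: pick one of the very-full laminations $\Lambda_k$; by Theorem \ref{thm:rainbow}, $x$ either is an endpoint of a leaf of $\Lambda_k$ or has a rainbow, and in either case there is a leaf $\ell$ of $\Lambda_k$ with both endpoints very close to $x$ (hence inside $I$). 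Applying the $g_i$ to $\ell$ and using invariance of $\Lambda_k$ produces leaves $g_i(\ell)$; the convergence $g_i(x_i)\to x'$ combined with the fact that $g_i$ does not collapse the short arc around $x$ forces, after passing to a subsequence, that the $g_i(\ell)$ are forced to link one another (because the $g_i$ cannot simultaneously contract a full neighborhood while being homeomorphisms unless there is an attracting fixed point nearby), contradicting that $\Lambda_k$ is a lamination — unless a fixed point of some element of $G$ accumulates to $x$, contradicting $x\notin\overline{F(G)}$. More carefully, one uses that a sequence of distinct homeomorphisms which does not go to infinity in $T$ must, after a subsequence, have source/sink behavior near a pair of points $a,b$, and these $a,b$ are limits of fixed points of $g_ig_j^{-1}$-type elements; since near $x$ the maps are ``almost the identity'' in the ping-pong sense provided by the leaves of $\Lambda_k$ trapping $x$, the sink $a$ and source $b$ must be separated from $x$, which then lets us locate an honest fixed point of some element of $G$ arbitrarily close to $x$.

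The main obstacle is the second inclusion: making rigorous the step that a non-properly-discontinuous sequence $(g_i)$ at $x$ actually produces fixed points of $G$-elements converging to $x$, i.e.\ that ``convergence-like'' behavior of a sequence of distinct group elements is detected by the invariant laminations and yields genuine fixed points. This is essentially where the convergence group property is being built by hand, and the delicate point is ruling out that the $g_i(\ell)$ escape to infinity or degenerate without linking; one controls this by using \emph{all three} pants-like laminations $\Lambda_1,\Lambda_2,\Lambda_3$ so that $x$ is trapped by leaves in sufficiently many transverse directions, together with the no-torsion hypothesis and Lemma \ref{lem:notproperlydiscontinuous}. I expect the clean way to organize it is to first establish a ``source-sink'' lemma for such sequences (possibly the content of the surrounding section) and then derive $L(G)\subseteq\overline{F(G)}$ as a corollary, rather than arguing directly with $x_i, g_i(x_i)$.
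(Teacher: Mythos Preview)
Your argument for $\overline{F(G)}\subseteq L(G)$ is fine and matches the paper's (terse) treatment.

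For $L(G)\subseteq\overline{F(G)}$, however, there is a genuine gap: you never isolate the mechanism that actually produces a fixed point near $x$, and you acknowledge as much in your final paragraph. The vague talk of ``$g_i(\ell)$ forced to link one another'' and ``source/sink behavior of $g_ig_j^{-1}$'' is not an argument, and invoking all three laminations is a red herring. The paper's proof uses a single lamination and a much more direct idea that you are missing. Here it is. Assume $x$ is not a cusp point (else $x\in F(G)$ trivially); by the pants-like property choose $\Lambda_\alpha$ with $x\notin E_{\Lambda_\alpha}$, and take a rainbow $(l_i)$ at $x$ with associated nested arcs $I_i\searrow\{x\}$. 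Since $x\in L(G)$, for each $i$ there is some $g_i\in G$ with $g_i(I_i)\cap I_i\neq\emptyset$. Now the crucial point: the endpoints of $I_i$ form a \emph{leaf} of $\Lambda_\alpha$, and $g_i$ preserves $\Lambda_\alpha$, so $g_i(l_i)$ is again a leaf and therefore cannot link $l_i$. Combined with $g_i(I_i)\cap I_i\neq\emptyset$, this forces a trichotomy: $g_i(\overline{I_i})\subset\overline{I_i}$, or $\overline{I_i}\subset g_i(\overline{I_i})$, or $I_i\cup g_i(I_i)=S^1$. In the first two cases Brouwer's fixed point theorem on the closed arc gives a fixed point of $g_i$ (or $g_i^{-1}$) in $\overline{I_i}$; in the third case any pre-existing point of $F(G)$ lies in $\overline{I_i}$ or is sent into it by $g_i^{-1}$. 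Either way $\overline{I_i}\cap F(G)\neq\emptyset$, and since $I_i\searrow\{x\}$ we conclude $x\in\overline{F(G)}$.

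So the missing idea is exactly: \emph{the rainbow interval is bounded by a leaf, invariance of the lamination forbids linked images, and this rigidity plus Brouwer manufactures the fixed point}. No source--sink analysis, no triple of laminations, no Lemma~\ref{lem:notproperlydiscontinuous} is needed for this lemma.
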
 
\begin{proof} Since $\overline{F(G)}$ is a minimal closed $G$-invariant subset of $S^1$, it is obvious that $\overline{F(G)} \subset L(G)$.

For the converse, we use laminations. Let $x \in L(G)$. If $x$ is the fixed point of a parabolic element, then we are done. Hence we may assume that it is not the case and 
take $\Lambda_\alpha$ such that $x \notin E_{\Lambda_\alpha}$.
Let $(l_i)$ be a sequence of leaves which forms a rainbow for $x$ (such a sequence exists by Proposition \ref{thm:rainbow}) and let $(I_i)$ be a sequence of open arcs in $S^1$ such that each $I_i$ is the component of the complement of the endpoints of $l_i$ containing $x$. 
Since $G$ acts not discontinuously at $x$, we can choose $g_i \in G$ such that $g_i(I_i)$ intersects $I_i$ nontrivially. This cannot happen arbitrarily, but one must have either $g_i(\overline{I_i}) \subset \overline{I_i}$, $\overline{I_i} \subset g_i(\overline{I_i})$, or $I_i \cup g_i(I_i) = S^1$, since the endpoints of $I_i$ form a leaf. 

In the former two cases, an application of Brower's fixed point theorem implies the existence of a fixed point in $\overline{I_i}$. In the latter one, take any point in $F(G)$: either it 
belongs to $\overline{I_i}$, or its image under $g_i^{-1}$ does
(see Figure \ref{fig:limitsetcases} for possible configurations, and the reason why our situation is restricted to these cases is described in Figure \ref{fig:linkedcase}).
Since $I_i$ shrinks to $x$, this implies that $x$ is a limit point of fixed points of $(g_i)$. 
\end{proof} 

\begin{figure}	
	\centering
	\begin{subfigure}[t]{1.7in}
		\includegraphics[scale=0.4]{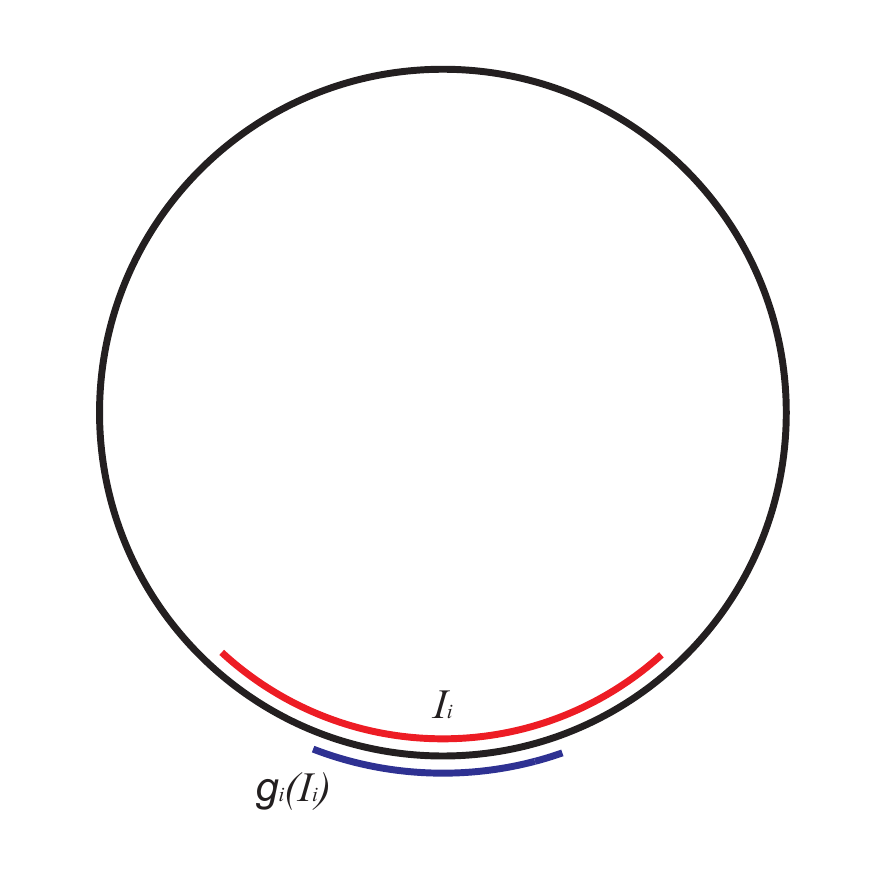}
		\caption{$g_i(I_i)$ is completely contained in $I_i$.}\label{fig:limitset1}		
	\end{subfigure}
	\quad
	\begin{subfigure}[t]{1.7in}
		\includegraphics[scale=0.4]{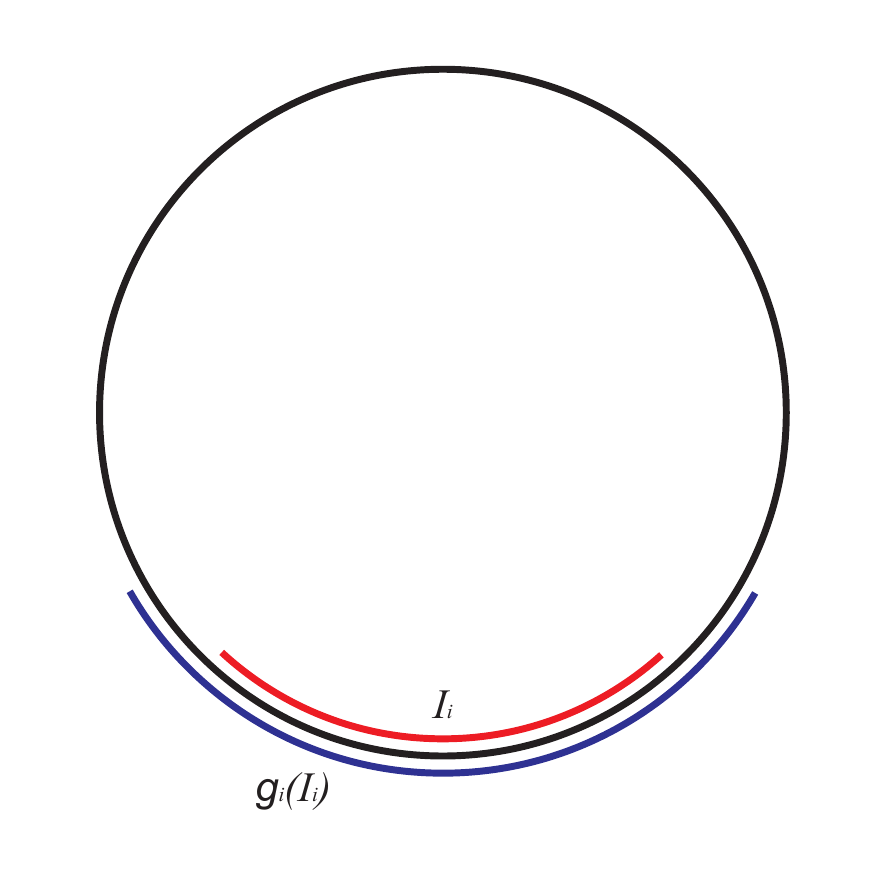}
		\caption{$g_i(I_i)$ completely contains $I_i$.}\label{fig:limitset2}
	\end{subfigure} \\
	\begin{subfigure}[t]{1.7in}
		\includegraphics[scale=0.4]{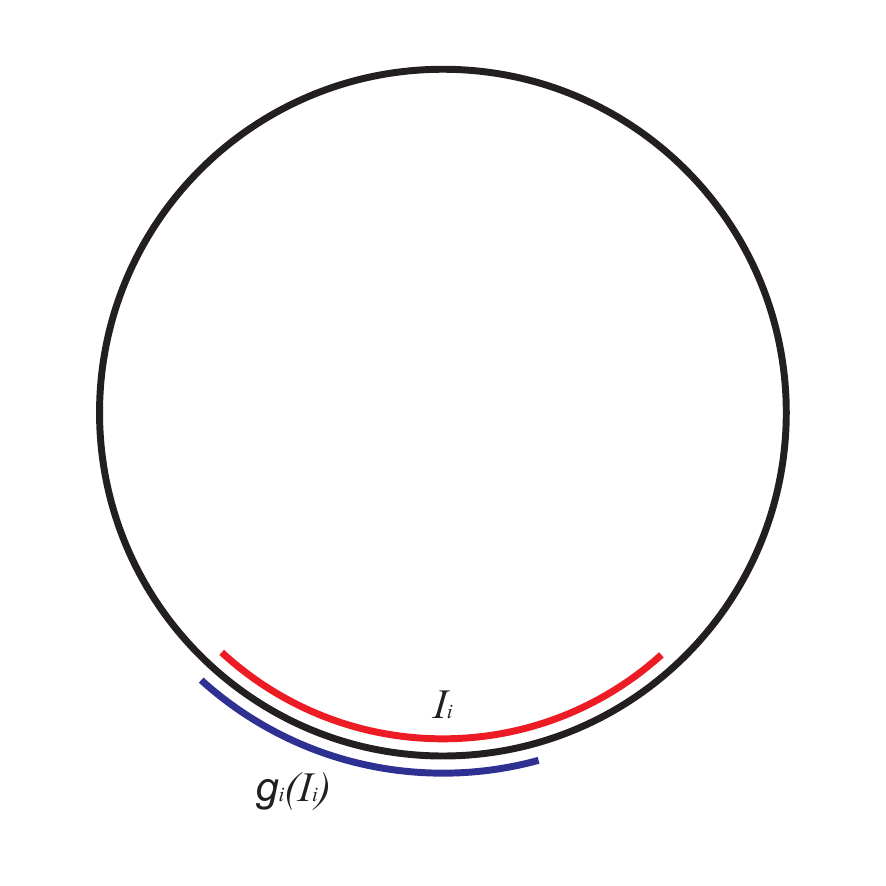}
		\caption{$g_i(I_i)$ is completely contained in $I_i$ but one endpoint of $I_i$ is fixed.}\label{fig:limitset3}
	\end{subfigure}
	\quad
	\begin{subfigure}[t]{1.7in}
		\includegraphics[scale=0.4]{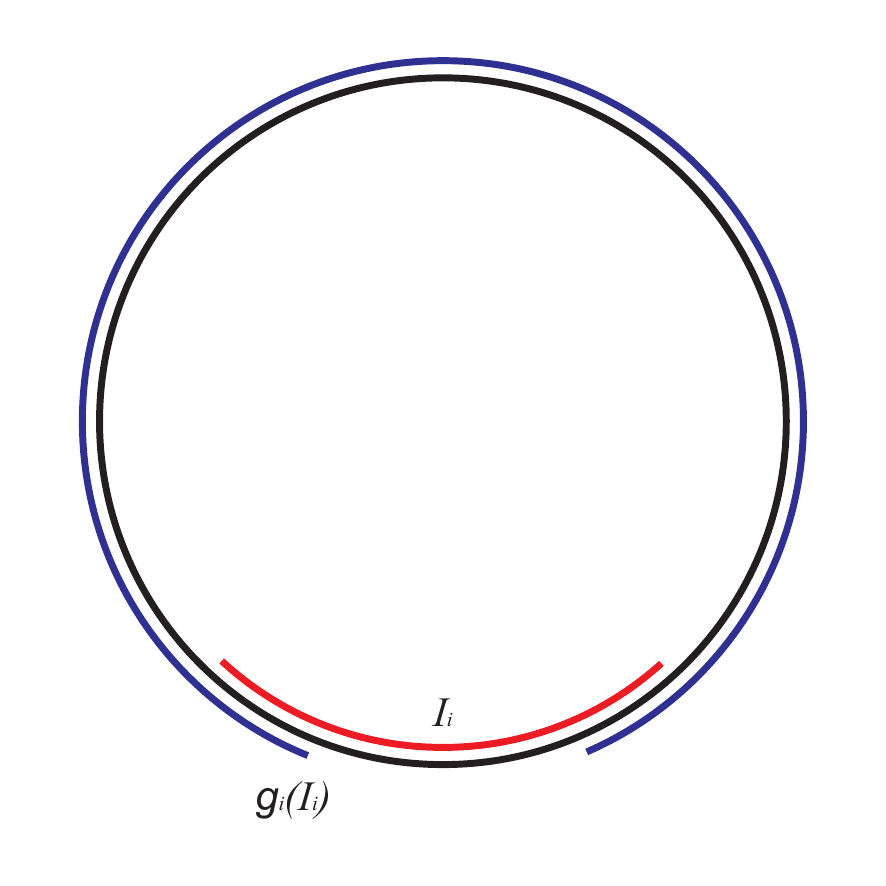}
		\caption{The union of $I_i$ and $g_i(I_i)$ is the whole circle.}\label{fig:limitset4}
	\end{subfigure}
	\caption{This shows the possibilities of the image of $I_i$ under $g_i$. $I_i$ is the red arc (drawn inside the disc) and $g_i(I_i)$ is the blue arc (drawn outside the disc) in each figure.}
	\label{fig:limitsetcases}
\end{figure}

 \begin{figure}[ht]
\begin{center}
\includegraphics[scale=0.4]{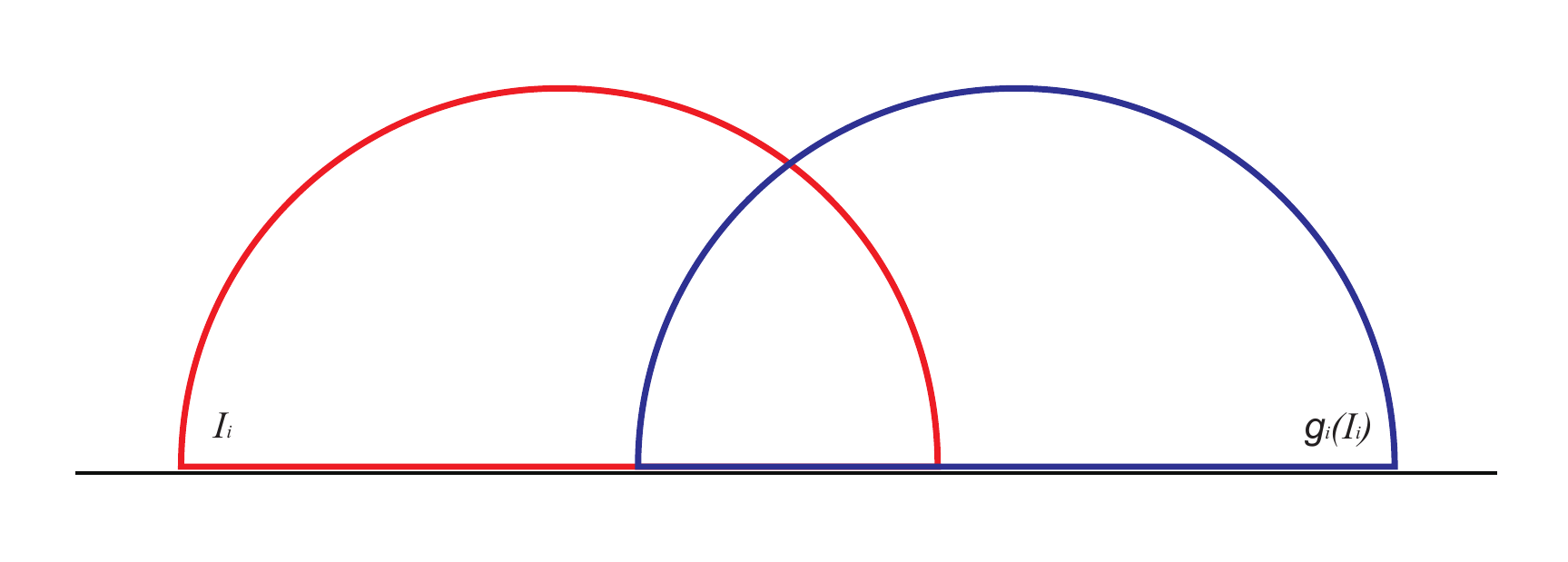}
\end{center}
\caption{ This case is excluded, since the leaf connecting the endpoints of $I_i$ is linked with its image under $g_i$. }
\label{fig:linkedcase}
\end{figure}

\begin{lem} 
\label{lem:discontinuousactionapproximatedbyfixedpoints}
Suppose $(g_i)$ is a sequence of elements of $G$ and $x \in S^1$ such that for any neighborhood $U$ of $x$, $g_i(U)$ intersects $U$ nontrivially for all large enough $i$. Then $x$ is a limit point of the  fixed points of $g_i$ in the sense that there exists a choice of a fixed point $a_i$ of $g_i$ for all $i$ such that the sequence $(a_i)$ converges to $x$. 
\end{lem}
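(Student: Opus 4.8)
The plan is to mimic the argument of Lemma \ref{lem:limitsetisfixedpoints}, but localizing at the point $x$ rather than invoking a global rainbow from the start. First I would dispose of the easy case: if $x$ is the fixed point of a parabolic element of $G$, then taking $g_i$ to be that fixed parabolic (or noting $x$ itself is already a fixed point of infinitely many $g_i$ if the $g_i$ all fix $x$) makes the conclusion trivial, so we may assume $x$ is not a cusp point. Then, since $G$ is pants-like $\COL_3$ with collection $\{\Lambda_1,\Lambda_2,\Lambda_3\}$, at least one lamination $\Lambda_\alpha$ does not have $x$ as an endpoint of any leaf; fix such an $\alpha$. By Theorem \ref{thm:rainbow} (there are enough rainbows), $x$ admits a rainbow in $\Lambda_\alpha$: a sequence of leaves $(l_i)$ whose four endpoints accumulate to $x$ from both sides. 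For each $i$ let $I_i$ be the open arc cut off by the endpoints of $l_i$ that contains $x$; then $\overline{I_i}$ shrinks to $\{x\}$ as $i \to \infty$.

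Next I would feed the hypothesis into this picture. By assumption, for every neighborhood $U$ of $x$ we have $g_j(U)\cap U\neq\emptyset$ for all large $j$; since each $I_i$ is such a neighborhood, after passing to a subsequence (reindexing) we may choose, for each $i$, an element $g_i$ from the given sequence so that $g_i(I_i)\cap I_i\neq\emptyset$. Now the crucial combinatorial point, exactly as in Lemma \ref{lem:limitsetisfixedpoints}: the endpoints of $I_i$ form a leaf of the $G$-invariant lamination $\Lambda_\alpha$, so its image under $g_i$ is again a leaf, hence unlinked with it. This forces one of the three configurations $g_i(\overline{I_i})\subset \overline{I_i}$, $\overline{I_i}\subset g_i(\overline{I_i})$, or $I_i\cup g_i(I_i)=S^1$ (the genuinely linked configuration of Figure \ref{fig:linkedcase} is excluded). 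In the first two cases Brouwer's fixed point theorem gives a fixed point $a_i$ of $g_i$ in $\overline{I_i}$. In the third case I would argue as in Lemma \ref{lem:limitsetisfixedpoints}: pick any point $q\in F(G)$; then either $q\in\overline{I_i}$, or $g_i^{-1}(q)\in\overline{I_i}$ — but $g_i^{-1}(q)$ need not be a fixed point of $g_i$, so here I instead use that $g_i$ maps the complementary arc $S^1\setminus I_i$ into $I_i$, and apply Brouwer on $\overline{S^1\setminus I_i}$ (which is mapped into itself after swapping roles, since $g_i(S^1\setminus\overline{I_i})\supset S^1\setminus\overline{I_i}$ would be the symmetric situation), obtaining a fixed point $a_i$ either in $\overline{I_i}$ or with $g_i(a_i)=a_i$ lying in $\overline{I_i}$ after replacing $g_i$ by $g_i^{-1}$. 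In every case we extract a fixed point $a_i$ of $g_i^{\pm 1}$ — equivalently of $g_i$ — with $a_i\in\overline{I_i}$.

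Finally, since $\overline{I_i}\to\{x\}$, the sequence $(a_i)$ converges to $x$, which is exactly the assertion. The main obstacle I anticipate is the bookkeeping in the third configuration $I_i\cup g_i(I_i)=S^1$: one has to be careful that Brouwer is being applied to a closed arc that is genuinely mapped into itself (possibly after replacing $g_i$ by $g_i^{-1}$ and $I_i$ by its complementary arc), and that the resulting fixed point — a fixed point of $g_i$ regardless of which of $g_i,g_i^{-1}$ one used — still lies in a small neighborhood of $x$. A secondary subtlety is the passage to a subsequence of $(g_i)$: the statement asks for a choice of fixed point $a_i$ \emph{for all $i$}, not merely along a subsequence, but the hypothesis "$g_i(U)$ intersects $U$ for all large enough $i$" applied with $U=I_i$ handles exactly this, so no subsequence extraction of the $g_i$ is actually needed once the rainbow $(l_i)$ is fixed — only a diagonal matching of the index of $l_i$ with the threshold coming from each neighborhood, which is routine.
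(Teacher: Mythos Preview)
Your non-cusp case is essentially the paper's argument (the paper simply says ``done by the proof of Lemma~\ref{lem:limitsetisfixedpoints}''), and your worry about the third configuration can be resolved cleanly: if $I_i\cup g_i(I_i)=S^1$ then $g_i^{-1}$ carries the closed complementary arc $S^1\setminus I_i$ into $\overline{I_i}$, so any fixed point of $g_i$ lying in $S^1\setminus\overline{I_i}$ would be sent into $\overline{I_i}$, a contradiction; hence all fixed points of $g_i$ already lie in $\overline{I_i}$, and by Theorem~\ref{thm:classification} together with torsion-freeness each $g_i$ does have one.

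The genuine gap is your dismissal of the cusp case. The sequence $(g_i)$ is \emph{given}; you are not free to ``take $g_i$ to be that fixed parabolic,'' nor is there any reason the $g_i$ themselves fix $x$. Worse, when $x$ is a cusp point it lies in $E_{\Lambda_\alpha}$ for \emph{every} $\alpha$ (Corollary~\ref{cor:parabolicfixedpoints}), so by Theorem~\ref{thm:rainbow} there is no rainbow at $x$ in any of the three laminations and your construction of the shrinking intervals $I_i$ collapses entirely. This is precisely the case where the paper does the real work of the lemma.

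The paper's fix is to manufacture a substitute for the rainbow using the parabolic $h$ fixing $x$: take any leaf $l$ of some $\Lambda_\alpha$ with one endpoint at $x$, and let $I_n$ be the open arc about $x$ bounded by the other endpoints of $h^n(l)$ and $h^{-n}(l)$. These $I_n$ shrink to $x$, but their two boundary points are no longer a single leaf --- they are endpoints of two leaves sharing the vertex $x$ --- so the unlinkedness argument is more delicate. The paper observes (Figure~\ref{fig:paraboliccases}) that while $g_i(I_{i+1})$ and $I_{i+1}$ may overlap badly, passing to the one-step-larger interval $I_i$ restores the trichotomy $g_i(\overline{I_i})\subset\overline{I_i}$, $\overline{I_i}\subset g_i(\overline{I_i})$, or $I_i\cup g_i(I_i)=S^1$, after which the Brouwer argument runs as before.
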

\begin{proof} If we can find $\Lambda_\alpha$ such that $x \notin E_{\Lambda_\alpha}$, then we are done by the proof of Lemma \ref{lem:limitsetisfixedpoints}. Suppose it is not the case, ie., $x$ is the fixed point of a parabolic element $h$. The key point here is to figure out how to construct $I_i$ for $x$ in order to mimic the proof of Lemma \ref{lem:limitsetisfixedpoints}.  Take arbitrary invariant lamination $\Lambda$. There exists a leaf $l$ which has $x$ as an endpoint. Then $h^n(l)$'s form an infinite family of such leaves. For all $i \in \NN$, let $a_i$ be the other endpoint of $h^i(l)$ and $b_i$ be the other endpoint of $h^{-i}(l)$. Let $I_n$ be an open interval containing $x$ with endpoints $a_n, b_n$ for each $n \in \NN$. Now we have a sequence of intervals shrinking to $x$. 
Passing to a subsequence if necessary, we may assume that $g_i(I_{i+1}) \cap I_{i+1} \neq \emptyset$ for all $i$. 

Note that it is possible that neither $g_i(I_{i+1}) \subset I_{i+1}$ nor $I_{i+1} \subset g_i(I_{i+1})$ holds (see Figure \ref{fig:parabolic1}), but one must have either $g_i(I_i) \subset I_i$ or $I_i \subset g_i(I_i)$ to avoid having any linked leaves (see Figure \ref{fig:parabolic2}). Now the same argument shows that $x$ is a limit point of fixed points of $g_i$. 
\end{proof} 

\begin{prop} 
\label{prop:discontinuousactionandthelimitset}
Suppose we have a sequence $(x_i)$ of points in $S^1$ which converges to $x \in S^1$ and a sequence $(g_i)$ of elements of $G$ such that $g_i(x_i)$ converges to $x' \in S^1$. 
Then either $x$ or $x'$ lies in the limit set $L(G)$. Moreover, by passing to a subsequence if necessary, either $x$ is an accumulation point of the fixed points of the sequence $(g_{i+1}^{-1}\circ g_i)$ or $x'$ is an accumulation point of the fixed points of the sequence $(g_i \circ g_{i+1}^{-1})$. 
\end{prop}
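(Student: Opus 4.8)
The plan is to combine the previous two lemmas with a simple pigeonhole argument on where the sequences $(x_i)$ and $(g_i(x_i))$ spend their time relative to small intervals around $x$ and $x'$. First I would fix arbitrarily small neighborhoods $U$ of $x$ and $V$ of $x'$, with $\overline U, \overline V$ disjoint (the case $x = x'$ can be handled separately, or absorbed by shrinking later). Since $x_i \to x$ and $g_i(x_i) \to x'$, for all large $i$ we have $x_i \in U$ and $g_i(x_i) \in V$, so $g_i(U) \cap V \neq \emptyset$, i.e. $g_i(U) \not\subset S^1 \setminus V$. Now consider $g_i^{-1}(V)$: it is an open set, and $x_i \in U \cap g_i^{-1}(V)$. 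The dichotomy I want is: either infinitely many of the $g_i^{-1}(V)$ meet $U$ in a way that forces $g_i(U) \cap U \neq \emptyset$ (hence $x \in L(G)$ by Lemma \ref{lem:discontinuousactionapproximatedbyfixedpoints}), or, roughly speaking, $g_i(U)$ covers most of $S^1$ outside a shrinking interval near $x'$, which forces $g_i^{-1}(V') \cap V' \neq \emptyset$ for small $V'$ around $x'$ (hence $x' \in L(G)$).

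The cleanest way to set this up, matching the style of Lemmas \ref{lem:limitsetisfixedpoints} and \ref{lem:discontinuousactionapproximatedbyfixedpoints}, is to work with leaves of one of the laminations $\Lambda_\alpha$ rather than arbitrary neighborhoods. Choose $\alpha$ with $x \notin E_{\Lambda_\alpha}$ if possible; then (Theorem \ref{thm:rainbow}) pick a rainbow $(l_i)$ at $x$ and let $I_i$ be the component of $S^1 \setminus (\text{endpoints of } l_i)$ containing $x$, so $I_i \downarrow \{x\}$, and each $\partial I_i$ spans a leaf. Similarly produce shrinking intervals $J_i \downarrow \{x'\}$ whose endpoints span leaves of some $\Lambda_\beta$ (using a rainbow at $x'$ if $x' \notin E_{\Lambda_\beta}$, or the parabolic-orbit construction of Lemma \ref{lem:discontinuousactionapproximatedbyfixedpoints} if $x'$ is a cusp point). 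For large $i$, $x_i \in I_{i}$ and $g_i(x_i) \in J_i$, so $g_i(I_i) \cap J_i \neq \emptyset$. Since $\partial I_i$ is a leaf of $\Lambda_\alpha$ and $g_i \in G$, $g_i(\partial I_i)$ is also a leaf; unlinkedness of $\Lambda_\alpha$ (applied to $g_i(\partial I_i)$ against $\partial J_i$, which after pulling back by $g_i^{-1}$ we may instead phrase as unlinkedness of the appropriate lamination, or just argue directly that two disjoint-or-nested-or-complementary configurations are the only options for $g_i(I_i)$ versus $J_i$ when both boundaries are leaves — exactly the trichotomy already used) forces one of: (a) $g_i(I_i) \subset J_i$; (b) $J_i \subset g_i(I_i)$; (c) $g_i(I_i) \cup J_i = S^1$. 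In case (a), passing to a subsequence, $g_{i+1}^{-1}(g_i(I_i)) \subset g_{i+1}^{-1}(J_i)$ is relevant — but more simply, (a) says $g_i$ maps the large interval $S^1 \setminus \overline{I_i}$ onto $S^1 \setminus \overline{g_i(I_i)} \supset S^1 \setminus \overline{J_i}$, so $g_i^{-1}(S^1\setminus\overline{J_i}) \supset S^1 \setminus \overline{I_i}$; comparing consecutive indices and using that $I_i, J_i$ shrink, one gets $g_i(I_{i}) \cap I_{i} \neq\emptyset$ fails but instead $g_i^{-1}(J_i)\supset S^1\setminus\overline{I_i}$ gives, for the composition $g_i\circ g_{i+1}^{-1}$, that it maps $J_{i+1}$ inside a neighborhood of $x'$ — leading to $x' \in L(G)$ via Lemma \ref{lem:discontinuousactionapproximatedbyfixedpoints}. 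Cases (b) and (c) are handled symmetrically and give $x \in L(G)$.

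For the refined "moreover" statement: once we know, say, $x' \in L(G)$, we want $x'$ to be an accumulation point of fixed points of $h_i := g_i \circ g_{i+1}^{-1}$. Observe $h_i(g_{i+1}(x_{i+1})) = g_i(x_{i+1})$, and since $x_{i+1}\to x$ and $g_i$ barely moves points of $U$ that it must... — more robustly, I would run the argument of Lemma \ref{lem:limitsetisfixedpoints}/\ref{lem:discontinuousactionapproximatedbyfixedpoints} directly on $h_i$: the intervals $g_{i+1}(I_{i+1})$ (resp. the $J$'s) shrink to $x'$ along a subsequence because $g_{i+1}(x_{i+1})\to x'$ and $g_{i+1}(\partial I_{i+1})$ are leaves forming a rainbow-like family at $x'$; then $h_i$ maps $g_{i+1}(I_{i+1})$ to $g_i(I_{i+1})$, which also clusters at $x'$, and the same trichotomy-plus-Brouwer argument yields a fixed point $a_i$ of $h_i$ in a small interval around $x'$, so $a_i \to x'$. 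The symmetric bookkeeping gives the $(g_{i+1}^{-1}\circ g_i)$-version clustering at $x$ in the other case.

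\textbf{Main obstacle.} The delicate point is the case analysis: showing that "$g_i(I_i)$ meets $J_i$" together with "$\partial I_i$, $\partial J_i$ are leaves" genuinely forces the clean trichotomy (a)/(b)/(c), and then correctly tracking, across \emph{consecutive} indices $i$ and $i+1$, which of $x$ or $x'$ the relevant composition ($g_i\circ g_{i+1}^{-1}$ vs. $g_{i+1}^{-1}\circ g_i$) attracts — i.e. making sure the "shrinking intervals" hypothesis of Lemma \ref{lem:discontinuousactionapproximatedbyfixedpoints} is actually verified for the composition and not just for the individual $g_i$. Everything else is a direct adaptation of the two preceding lemmas and a Brouwer fixed-point application; the linking/unlinking geometry is the part that needs care, and a figure (like Figures \ref{fig:limitsetcases}, \ref{fig:linkedcase}) would make it transparent.
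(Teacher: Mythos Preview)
Your plan is in the right spirit, but it diverges from the paper's argument in an important way and carries a real gap.

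\medskip

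\textbf{How the paper actually argues.} The paper does \emph{not} invoke the leaf-trichotomy at this stage at all. Its proof is purely set-theoretic. Fix an arbitrary neighborhood $U$ of $x'$; for large $i$ one has $g_i(x_i)\in U$, so $x_i\in g_i^{-1}(U)$. The dichotomy is:
\begin{itemize}
\item either (for some such $U$) every $g_i^{-1}(U)$ contains no $x_j$ with $j\neq i$; then the $g_i^{-1}(U)$ are pairwise disjoint intervals, each containing one $x_i$, and since $x_i\to x$ these intervals shrink to $x$, forcing $g_i^{-1}(x')\to x$. Then for any neighborhood $V$ of $x$ one gets $(g_{i+1}^{-1}\circ g_i)(V)\cap V\neq\emptyset$ for large $i$;
\item or (for every $U$) infinitely many $g_i^{-1}(U)$ contain some other $x_{n_i}$. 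After making $(x_i)$ monotone one may take $n_i=i\pm 1$, and then $g_{n_i}(x_{n_i})\in U$ and $g_i(x_{n_i})\in U$ give $(g_i\circ g_{i+1}^{-1})(U)\cap U\neq\emptyset$ (or the analogous statement).
\end{itemize}
In either branch one lands exactly in the hypothesis of Lemma~\ref{lem:discontinuousactionapproximatedbyfixedpoints}, applied to the composed sequence at $x$ or at $x'$. Laminations and the nested/complementary trichotomy enter only inside that lemma, not here.

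\medskip

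\textbf{Where your argument breaks.} Two concrete problems:

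(i) Your trichotomy (a)/(b)/(c) for $g_i(I_i)$ versus $J_i$ relies on $g_i(\partial I_i)$ and $\partial J_i$ being unlinked. But $\partial I_i$ is a leaf of $\Lambda_\alpha$ while $\partial J_i$ is a leaf of $\Lambda_\beta$, and you explicitly allow $\alpha\neq\beta$. Leaves of different laminations are in general linked, so the trichotomy fails as stated. (With three laminations and the pants-like property one \emph{can} arrange $\alpha=\beta$ when neither $x$ nor $x'$ is a cusp, but you did not do this, and the cusp case would still need separate care.)

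(ii) Your case (a) analysis contains a reversed inclusion. From $g_i(I_i)\subset J_i$ one gets $g_i^{-1}(S^1\setminus\overline{J_i})\subset S^1\setminus\overline{I_i}$, not $\supset$; and your next line, ``$g_i^{-1}(J_i)\supset S^1\setminus\overline{I_i}$'', would force $g_i(S^1)\subset J_i$, which is impossible. So the chain of inclusions you use to feed the composition $g_i\circ g_{i+1}^{-1}$ into Lemma~\ref{lem:discontinuousactionapproximatedbyfixedpoints} does not go through.

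\medskip

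The upshot: the paper sidesteps all of this by a much softer argument---the pigeonhole is on whether preimages $g_i^{-1}(U)$ separate the $x_j$'s or not---and defers every use of leaves to Lemma~\ref{lem:discontinuousactionapproximatedbyfixedpoints}. Your instinct to use the trichotomy directly could perhaps be repaired (force $\alpha=\beta$ via the pants-like hypothesis, then redo the case analysis carefully), but as written it is both more complicated than needed and not correct.
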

\begin{proof} Take $U$ any neighborhood of $x'$. Then for large enough $N$, we have $g_i(x_i) \in U$ for all $i \ge N$. 
We show there is a dichotomy here: either the preimages of $U$ shrink to a point, and we are done quickly, or by passing to a subsequence we can assume all of them to be large. 

Suppose $g_i^{-1}(U)$ does not contain any $x_j$ with $j \neq i$ for each $i \ge N$. 
But $g_i^{-1}(U)$ contains $x_i$ and the sequence $(x_i)$ converges to $x$. Hence $g_i^{-1}(U)$ for $i \ge N$ form a sequence of disjoint open intervals shrinking to $x$, implying that $g_i^{-1}(x')$ converges to $x$. Now let $V$ be a neighborhood of $x$. Replacing $N$ by a larger  number if necessary, $g_i^{-1}(x')$ lies in $V$ for all $i \ge N$. Then $(g_{i+1}^{-1} \circ g_i)(V)$ intersects nontrivially $V$ for all $i \ge N$. 

Now suppose such $U$ does not exist. Take an arbitrary neighborhood $U$ of $x'$. Passing to a subsequence, we may assume that $g_i(x_i) \in U$ for all $i$. By the assumption, for each $i$, there exists $n_i \neq i$ such that $x_{n_i} \in g_i^{-1}(U)$.  Taking a subsequence of $x_i$, we may assume that $(x_i)$ converges to $x$ monotonically. Thus we are allowed to assume that either $n_i  = i+1$ or $n_i = i-1$ for all $i$. In the former case,  $g_{i+1} \circ g_i^{-1} (U)$ intersects $U$ nontrivially for each $i$, and in the latter case, $g_i \circ g_{i+1}^{-1}$ does the same thing (note that $g_i \circ g_{i+1}^{-1}$ and $g_{i+1} \circ g_i^{-1}$ have the same fixed points).

Now the result follows by applying Lemma \ref{lem:discontinuousactionapproximatedbyfixedpoints}. 
\end{proof}

\begin{figure}	
	\centering
	\begin{subfigure}[t]{2.7in}
		\includegraphics[scale=0.44]{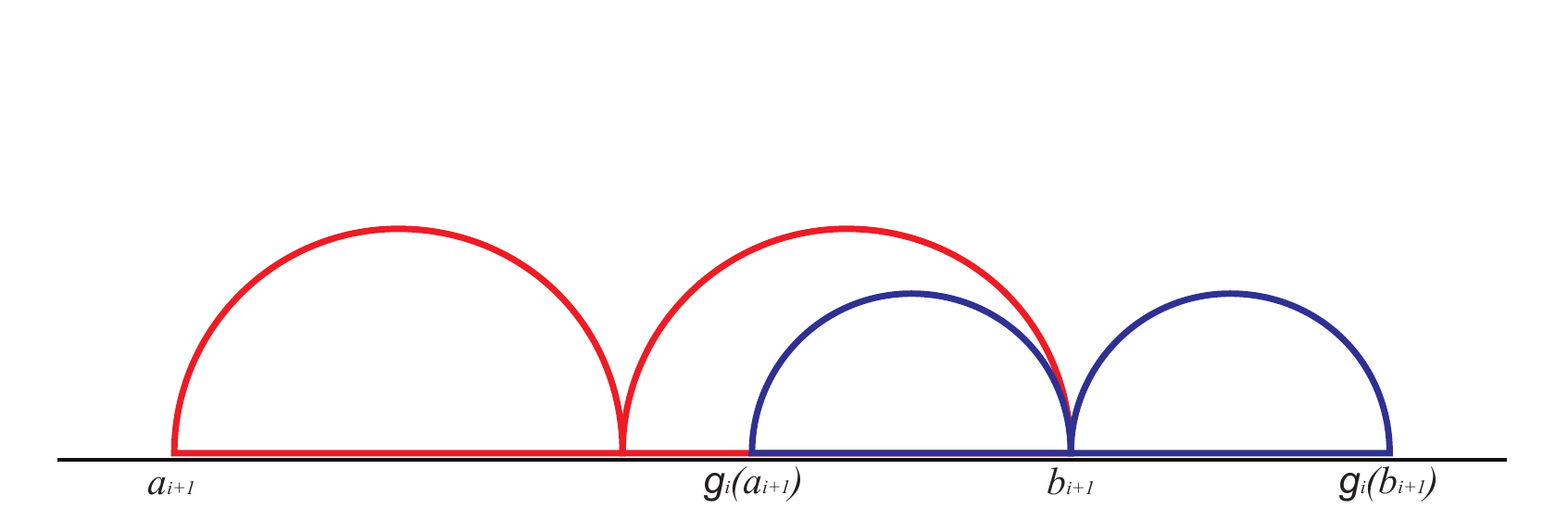}
		\caption{Both $g_i(I_{i+1}) \setminus I_{i+1}$ and $I_{i+1} \setminus g_i(I_{i+1})$ could be non-empty in this case.}\label{fig:parabolic1}		
	\end{subfigure}
	\quad
	\begin{subfigure}[t]{2.7in}
		\includegraphics[scale=0.44]{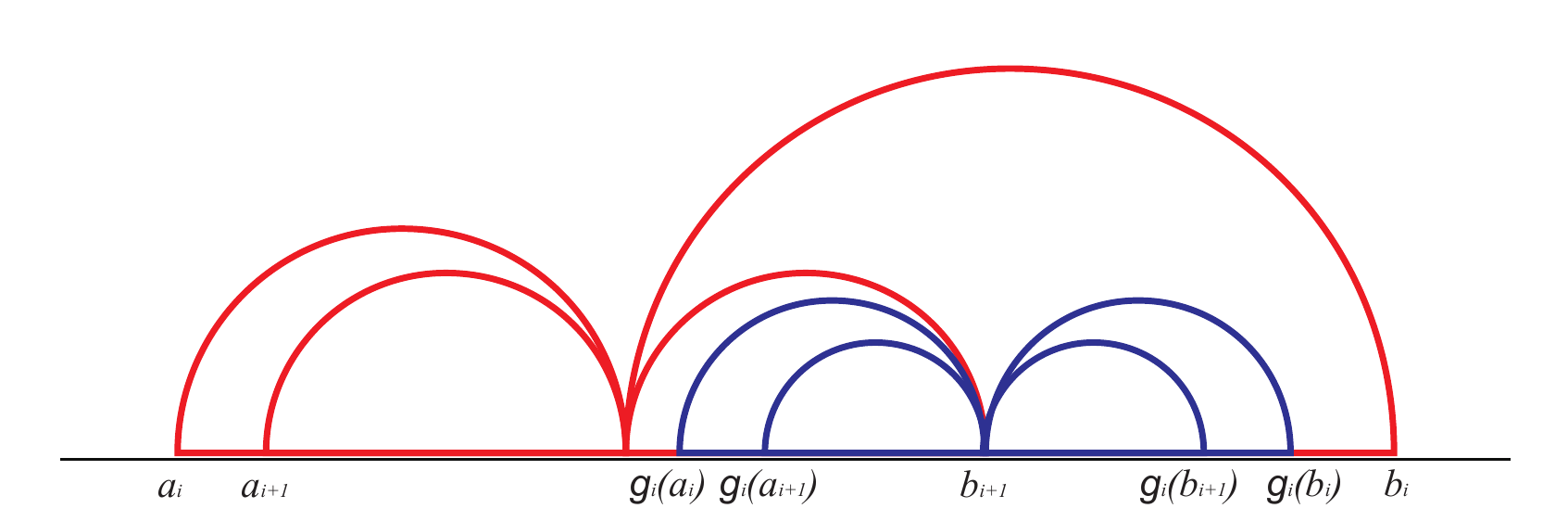}
		\caption{But even in case (a), there is no such a problem for $I_i$ and $g_i(I_i)$. }\label{fig:parabolic2}
	\end{subfigure} 
	\caption{The nested intervals for a cusp point need more care. (a) shows that it might have a problematic intersection, but (b) shows we can take a one-step bigger intervals to avoid that. The endpoints of $I_i$ are marked as $a_i, b_i$ and the endpoints of $I_{i+1}$ are marked as $a_{i+1}, b_{i+1}$. }
	\label{fig:paraboliccases}
\end{figure}

We are ready to prove the main theorem of the paper. 
\begin{thm}
\label{thm:pcol3isconvergence}
Let $G$ be a torsion-free discrete subgroup $\Homeop(S^1)$. If $G$ admits a pants-like collection of very-full invariant laminations $\{\Lambda_1, \Lambda_2, \Lambda_3\}$, then 
$G$ is a Fuchsian group.
\end{thm}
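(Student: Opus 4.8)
The plan is to show that $G$ is a convergence group, after which the convergence group theorem (\cite{Gabai91}, \cite{CassonJungreis94}) combined with the torsion-freeness hypothesis immediately yields that $G$ is a Fuchsian group. By the criterion recalled above (Theorem 4.A of \cite{Tukia88} in the form stated for the set $C = G$), it suffices to prove that $G$ acts properly discontinuously on the space $T$ of ordered triples of distinct points of $S^1$. So I would argue by contradiction: suppose the action on $T$ is not properly discontinuous. Then there is a compact set $K \subset T$ and infinitely many distinct $g_i \in G$ with $g_i(K) \cap K \neq \emptyset$; applying Lemma \ref{lem:notproperlydiscontinuous} (the version stated for a group acting on a space, with $X = T$), I get triples $\tau_i = (x_i^{(1)}, x_i^{(2)}, x_i^{(3)}) \in K$ converging to a triple $\tau = (x^{(1)}, x^{(2)}, x^{(3)})$ of \emph{distinct} points, and (after passing to a subsequence) $g_i(\tau_i)$ converging to another triple $\tau' = (x'^{(1)}, x'^{(2)}, x'^{(3)})$ of distinct points.

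Now I would apply Proposition \ref{prop:discontinuousactionandthelimitset} coordinatewise. For each coordinate $k \in \{1,2,3\}$, we have $x_i^{(k)} \to x^{(k)}$ and $g_i(x_i^{(k)}) \to x'^{(k)}$, so the proposition says that, after passing to a further subsequence, either $x^{(k)}$ is an accumulation point of the fixed points of $(g_{i+1}^{-1} g_i)$, or $x'^{(k)}$ is an accumulation point of the fixed points of $(g_i g_{i+1}^{-1})$. Here I would be a little careful: passing to a subsequence is allowed, but I should extract one common subsequence that works for all three coordinates simultaneously, and I should check that the same ``side'' of the dichotomy (the $(g_{i+1}^{-1}g_i)$ case versus the $(g_i g_{i+1}^{-1})$ case) can be forced on all three coordinates at once — this is because the dichotomy in the proof of Proposition \ref{prop:discontinuousactionandthelimitset} is governed by whether the preimages $g_i^{-1}(U)$ shrink or stay large, which is a property of the sequence $(g_i)$ and not of the individual coordinate, so passing to one subsequence fixes which case occurs and it is the same for all $k$. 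In the first case, write $h_i = g_{i+1}^{-1} g_i$; each $h_i$ is a nonidentity element of $G$ (for $i$ large, since the $g_i$ are distinct) and since $G$ is torsion-free, Theorem \ref{thm:classification} says $h_i$ is parabolic or hyperbolic, hence $|\Fix_{h_i}| \le 2$. But we have just produced three distinct limit points $x^{(1)}, x^{(2)}, x^{(3)}$, each an accumulation point of $\bigcup_i \Fix_{h_i}$. A set which is a $\le 2$-point set for each $h_i$ can only accumulate at points of $\overline{\bigcup_i \Fix_{h_i}}$, and I need to derive a contradiction from having three distinct such accumulation points together with the laminations; the clean way is: the three points $x^{(1)}, x^{(2)}, x^{(3)}$ cut $S^1$ into three arcs, pick a very-full lamination $\Lambda_\alpha$ none of whose leaves has $x^{(1)}$ as an endpoint (possible by the pants-like property unless $x^{(1)}$ is a cusp, in which case use instead the construction of Lemma \ref{lem:discontinuousactionapproximatedbyfixedpoints}), and run the rainbow/nested-interval argument to show the $h_i$ eventually share a fixed point configuration incompatible with three disjoint accumulation regions. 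The second case is symmetric with $h_i$ replaced by $g_i g_{i+1}^{-1}$.

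Alternatively — and I think this is cleaner and is probably the intended route — I would not separate into three independent coordinate applications but instead observe that the proof of Proposition \ref{prop:discontinuousactionandthelimitset}, via Lemma \ref{lem:discontinuousactionapproximatedbyfixedpoints}, actually shows something stronger when fed a \emph{triple}: the nested intervals $I_i$ around (say) $x^{(1)}$ can be chosen so that the conjugating elements $h_i$ satisfy $h_i(I_i) \subseteq I_i$ or $I_i \subseteq h_i(I_i)$, i.e. $h_i$ has a fixed point in the tiny arc $\overline{I_i}$; doing this around two of the three points $x^{(1)}, x^{(2)}$ of the limiting triple simultaneously forces $h_i$ to have fixed points in two disjoint shrinking arcs, hence $|\Fix_{h_i}| \ge 2$ with the two fixed points converging to the distinct points $x^{(1)} \ne x^{(2)}$; then the third point $x^{(3)}$, together with $g_i(x_i^{(3)}) \to x'^{(3)}$, forces (via the same mechanism applied to whichever of $x^{(3)}, x'^{(3)}$ is controllable) either a third fixed point of $h_i$ near $x^{(3)}$ — contradicting $|\Fix_{h_i}|\le 2$ from Theorem \ref{thm:classification} — or else $x'^{(3)} = x'^{(1)}$ or $x'^{(2)}$, contradicting that $\tau'$ has distinct coordinates. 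Either way we reach a contradiction, so the action on $T$ is properly discontinuous, $G$ is a convergence group, and hence — being torsion-free and discrete — a Fuchsian group.

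\textbf{Main obstacle.} The real work is bookkeeping rather than a new idea: making sure the subsequence extraction is uniform over the three coordinates and that the two branches of the dichotomy in Proposition \ref{prop:discontinuousactionandthelimitset} are applied consistently, and — the genuinely delicate point — handling the cusp case, where $x^{(k)}$ (or $x'^{(k)}$) is a parabolic fixed point and lies in $E_{\Lambda_\alpha}$ for \emph{every} $\alpha$, so one cannot pick a lamination avoiding it and must instead build the nested intervals from the parabolic orbit of a leaf as in Lemma \ref{lem:discontinuousactionapproximatedbyfixedpoints}, being careful (as Figure \ref{fig:paraboliccases} warns) to take ``one-step-bigger'' intervals to avoid spurious linked configurations. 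I expect the cusp case, and the interaction of a cusp coordinate with a non-cusp coordinate in the same limiting triple, to be where most of the care is needed. Everything else — producing a fixed point in a shrinking arc via Brouwer once an interval is mapped inside itself, and invoking $|\Fix_g| \le 2$ from Corollary \ref{cor:atmost2fixedpoints} / Theorem \ref{thm:classification} — is already in hand from the earlier sections.
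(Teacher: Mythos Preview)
Your overall architecture is right --- reduce to the convergence property via Tukia's criterion, produce a bad sequence on $T$, then feed Proposition~\ref{prop:discontinuousactionandthelimitset} and the bound $|\Fix_g|\le 2$ into the machine --- but there are two genuine gaps between your outline and a working argument, and the paper's proof handles both in ways you do not.

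First, your claim that the dichotomy in Proposition~\ref{prop:discontinuousactionandthelimitset} ``is a property of the sequence $(g_i)$ and not of the individual coordinate'' is false. In the proof of that proposition the branch is decided by whether the preimages $g_i^{-1}(U)$ (for $U$ a neighborhood of $x'$) separate the $x_j$; this depends on both $x$ and $x'$, hence on the coordinate. In fact the paper \emph{needs} the branches to differ: by pigeonhole, two of the three coordinates land on the same side --- say $x',y'$ are accumulated by fixed points of $h_i h_{i+1}^{-1}$ --- and then a subsequence argument (carefully noting that a subsequence of $(h_{i+1}^{-1}h_i)$ is \emph{not} induced by a subsequence of $(h_i)$) forces $z'$ off that side, so that $z$ is accumulated by fixed points of $h_{i+1}^{-1}h_i$ instead. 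You cannot get all three on one side and then contradict $|\Fix|\le 2$: a sequence of two-point sets can perfectly well accumulate at three points.

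Second, you are missing the transfer step that makes the contradiction fire. If $a_i\in\Fix(h_{i+1}^{-1}h_i)$, then $h_i(a_i)\in\Fix(h_i h_{i+1}^{-1})$; the paper uses this to conclude $h_i(a_i)\to x'$ (or $y'$). Now one has $a_i,z_i$ both in a rainbow interval $I^z_i$ shrinking to $z$, and $h_i(a_i),h_i(x_i)$ both in $I^{x'}_i$ shrinking to $x'$. The lamination structure forces either $h_i(I^z_i)\subset I^{x'}_i$ or $h_i(I^z_i)\cup I^{x'}_i=S^1$; the latter gives the convergence property directly, and the former forces $h_i(z_i)\to x'\neq z'$. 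That is the actual contradiction. Your ``three fixed points of the same $h_i$'' endgame never materializes, and your alternative route via simultaneous nested intervals around $x^{(1)},x^{(2)}$ presupposes exactly the coordinate-uniformity you cannot have. The cusp bookkeeping you flag is real but secondary; the missing ideas are the mixed-side pigeonhole and the conjugation $a_i\mapsto h_i(a_i)$ linking the two fixed-point sequences.
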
 
\begin{proof} 
 By the Convergence Group Theorem, it suffices to prove that $G$ is a convergence group. Suppose not. Then there exists a sequence $(g_i)$ of distinct elements of $G$ such that for any pair of (not necessarily distinct) points $\alpha, \beta$ of $S^1$, no subsequence converges to $\alpha$ uniformly on compact subsets of $S^1 \setminus \{beta\}$. This  implies that this sequence, as a set, acts on $T$ not properly discontinuously. Then we have three sequences $(x_i), (y_i), (z_i)$ converging to $x, y, z$ and a sequence $(h_i)$ of the set $\{g_i\}$ such that $h_i(x_i) \to x', h_i(y_i) \to y', h_i(z_i) \to z'$ where $x, y, z$ are all distinct and $x', y', z'$ are all distinct. Note that the sequence $(h_i)$ could have been taken as a subsequence of $(g_i)$, so let's assume that. 

From Proposition \ref{prop:discontinuousactionandthelimitset}, we can take a subsequence of $(h_i)$ (call it again $(h_i)$ by abusing the notation) such that
either two of $x, y, z$ are accumulation points of the fixed points $h_{i+1}^{-1} \circ h_i$ or two of $x', y', z'$ are accumulation points of fixed points of $h_i \circ h_{i+1}^{-1}$.
Without loss of generality, suppose $x', y'$ are accumulation points of fixed points of $h_i \circ h_{i+1}^{-1}$.

We would like to pass to subsequences so that the fixed points of the sequence $h_{i+1}^{-1} \circ h_i$ (or $h_i \circ h_{i+1}^{-1}$) have at most two accumulation points. But this cannot be done directly, since a subsequence of $(h_{i+1}^{-1} \circ h_i)$ is not from a subsequence of $(h_i)$ in general. Instead, we proceed as follows. 

Take a subsequence $(h_{i_k +1}^{-1} \circ h_{i_k})$ of $(h_{i+1}^{-1} \circ h_i)$ such that there are at most two points where the fixed points of $(h_{i_k +1}^{-1} \circ h_{i_k})$
accumulate (Such a subsequence exists due to Corollary \ref{cor:atmost2fixedpoints} and the compactness of $S^1$). Similarly, let $(h_{i_{k_j}} \circ h_{i_{k_j}+1}^{-1})$ be a subsequence of $(h_i \circ h_{i+1}^{-1})$ such that there are at most two points where the fixed points of 
$(h_{i_{k_j}} \circ h_{i_{k_j}+1}^{-1})$ accumulate. 

Since $x', y'$ are accumulation points of fixed points of $h_i \circ h_{i+1}^{-1}$, they are accumulated points of fixed points of $(h_{i_{k_j}} \circ h_{i_{k_j}+1}^{-1})$. 
But the fixed points of $(h_{i_{k_j}} \circ h_{i_{k_j}+1}^{-1})$ have at most two accumulation points and $x', y', z'$ are three distinct points, so that $z'$ cannot be an accumulation point of fixed points of $(h_{i_{k_j}} \circ h_{i_{k_j}+1}^{-1})$.
This also implies that $z'$ is not an accumulation point of fixed points of $(h_i \circ h_{i+1}^{-1})$. By our choice of $(h_i)$, this implies that $z$ must be an accumulation fixed points of $(h_{i+1}^{-1} \circ h_i)$ (so it is an accumulation point of fixed points of $(h_{i_k +1}^{-1} \circ h_{i_k})$). 

Let $(a_i)$ be the sequence of fixed points of $(h_{i+1}^{-1} \circ h_i)$ which converges to $z$. 
Now we consider a subsequence such that $(h_{i_{k_{j_w}}}(a_{i_{k_{j_w}}})$ converges to a point, say $a$. 
Note that for each $i$, we have $(h_i \circ h_{i+1}^{-1})(h_i(a_i)) = h_i(a_i)$. Hence, $(h_{i_{k_{j_w}}}(a_{i_{k_{j_w}}})$ are fixed points of a subsequence of $(h_{i_{k_j}} \circ h_{i_{k_j}+1}^{-1})$, so that $a$ must be $x'$ or $y'$. Without loss of generality, let's assume that $a= x'$. 

Then we have the followings; 
\begin{itemize}
\item[(1)] $(a_{i_{k_{j_w}}})$ converges to $z$, since $a_i$ converges to $z$. 
\item[(2)] $(h_{i_{k_{j_w}}}(a_{i_{k_{j_w}}}))$ converges to $x'$. 
\item[(3)] $(h_{i_{k_{j_w}}}(z_{i_{k_{j_w}}}))$ converges to $z'$, since $h_i(z_i)$ converges to $z'$. 
\item[(4)] $(h_{i_{k_{j_w}}}(x_{i_{k_{j_w}}}))$ converges to $x'$, since $h_i(x_i)$ converges to $x'$. 
\end{itemize} 

Now for notational simplicity, we drop the subscripts ${i_{k_{j_w}}}$ and simply denote them as $i$ (we are passing to subsequences here). 

We want to have a nested strictly decreasing sequence of intervals for each of $x', z$. Suppose for now that none of them is a cusp point. By the pants-like property, we can take $\Lambda_{\alpha}$ so that no leaf ends in $\{x', z\}$. In this case, we have a rainbow for each of $x', z$ and take intervals as in the proof of Lemma \ref{lem:limitsetisfixedpoints}. For $p \in \{x', z\}$, let $(I^p_i)$ be the sequence of nested decreasing intervals containing $p$. 
By taking subsequences, we may assume that for each $i$, we have $a_i, z_i \in I^z_i$ but $x_i \notin I^z_i$, and $h_i(x_i), h_i(a_i) \in I^{x'}_i$. Then, in particular, $h_i(I^z_i)$ intersects $I^{x'}_i$ non-trivially. But since $h_i$ is a homeomorphism and $h_i(x_i) \in I^{x'}_i$, it is impossible to have $h_i(I^z_i) \supset I^{x'}_i$. Hence there are two possibilities: either $h_i(I^z_i) \subset I^{x'}_i$ or $I^z_i$ is expanded by $h_i$ so that $h_i(I^z_i) \cup I^{x'}_i = S^1$. If the latter happens infinitely often, we can take a subsequence which has only the latter case for all $i$. Then $S^1 \setminus (I^z_i \cup I^{x'}_n)$ is mapped completely into $I^{x'}_n$ by $h_i$. This shows that the sequence $h_i$ has the convergence property with the two points $z, x'$, a contradiction to our assumption. Hence we may assume that this does not happen, ie., $h_i(I^z_i)$ are completely contained in $I^{x'}_n$. But we know that $z_i \in I^z_i$ and $h_i(z_i) \to z' \neq x'$. This is a contradiction.

If some of them are cusp points, we take intervals as in the proof of Lemma \ref{lem:discontinuousactionapproximatedbyfixedpoints}. As we saw, one needs to be slightly more careful to choose $(I^{x'}_i), (I^z_i)$ so that the case $h_i(I^{x'}_i) \not\subset I^z_i, h_i(I^{x'}_i) \not\supset I^z_i$ and $h_i(I^{x'}_i) \cup I^z_i \neq S^1$ does not happen; one can avoid this as we did in the proof of Lemma \ref{lem:discontinuousactionapproximatedbyfixedpoints} (recall the Figure \ref{fig:paraboliccases}). Then the same argument goes through.

Hence the set $\{g_i\}$ must act properly discontinuously on $T$, a contradiction to our assumption. Now the result follows. 
\end{proof}

\begin{rmk} 
\label{rmk:pantslikeproperty}
In the proof of Theorem \ref{thm:pcol3isconvergence}, the consequence of the pants-like property which we needed is that for arbitrary pair of points $p, q \in S^1$ which are not fixed by some parabolic elements, there exists an invariant lamination so that none of $p, q$ is an endpoint of the leaf of that lamination. 
\end{rmk}

\begin{cor}[Main Theorem] 
\label{cor:pcol3ifffuchsian}
Let $G$ be a torsion-free discrete subgroup of $\Homeop(S^1)$. Then $G$ is a pants-like $\COL_3$ group if and only if $G$ is a Fuchsian group whose quotient is not the thrice-punctured sphere. 
\end{cor}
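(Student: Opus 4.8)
The plan is to derive the Main Theorem (Corollary~\ref{cor:pcol3ifffuchsian}) by combining the two directions already established in the excerpt. One direction is essentially Theorem~\ref{thm:fuchsianispcol3}: if $G$ is a Fuchsian group whose quotient $\HH^2/G$ is not the thrice-punctured sphere, then a pants-like collection of three very-full invariant laminations was constructed explicitly from three pairwise-disjoint pants-curves, so $G$ is pants-like $\COL_3$. One subtlety to address is that Theorem~\ref{thm:fuchsianispcol3} is phrased for Fuchsian groups in general, but in the present setting we are given $G$ as a torsion-free discrete subgroup of $\Homeop(S^1)$; since ``Fuchsian'' means conjugate in $\Homeop(S^1)$ to a discrete subgroup of $\PR$, and the property of being pants-like $\COL_3$ is invariant under topological conjugacy (laminations, transversality, and parabolic fixed points all transport along the conjugating homeomorphism), this causes no difficulty.

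The other direction is the substantive one, and it is exactly Theorem~\ref{thm:pcol3isconvergence}: a torsion-free discrete $G \subset \Homeop(S^1)$ admitting a pants-like collection $\{\Lambda_1,\Lambda_2,\Lambda_3\}$ of very-full invariant laminations is a convergence group, hence Fuchsian by the Convergence Group Theorem (\cite{Tukia88}, \cite{Gabai91}, \cite{CassonJungreis94}). So the first step is simply to invoke Theorem~\ref{thm:pcol3isconvergence} to conclude $G$ is Fuchsian. It then remains to rule out that $\HH^2/G$ is the thrice-punctured sphere. For this I would argue by contradiction: if $\HH^2/G$ were the thrice-punctured sphere, then $G$ would be (conjugate to) the corresponding Fuchsian group, which is freely generated by two parabolics and all of whose non-identity elements are hyperbolic or parabolic with a very rigid configuration of fixed points. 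The point is that a pants-like collection of three very-full laminations is too much structure for this group to carry; concretely, one wants to show that $E(\Lambda_i)\cap E(\Lambda_j)$ being exactly the (countable) set of cusp points, together with very-fullness, forces a ``pants curve'' worth of leaves — but the thrice-punctured sphere admits no essential simple closed curve that is not peripheral, so there is no room to separate three very-full laminations transversally off the cusps. I expect the cleanest route is to note that very-fullness plus the classification in Theorem~\ref{thm:classification} shows every gap is an ideal polygon whose edges join fixed points of parabolics or hyperbolics, and on the thrice-punctured sphere every hyperbolic leaf would have to accumulate on a simple closed geodesic, of which there are none essential, a contradiction with having three mutually transverse such laminations.

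The main obstacle is this last exclusion of the thrice-punctured sphere: the convergence-group machinery gives ``Fuchsian'' for free, but pinning down precisely why three very-full pants-like laminations cannot coexist on $\Gamma_0(2)$-type groups requires a careful low-complexity surface-topology argument rather than the dynamical arguments that dominate the rest of the paper. In writing this up I would keep that step short by reducing it to the observation, already implicit in Proposition~\ref{prop:infinitepantscurves} and its proof, that producing even a single very-full $G$-invariant lamination forces a collection of leaves projecting to a geodesic lamination of $\HH^2/G$ whose complementary pieces are ideal triangles; on the thrice-punctured sphere the only way to do this uses the three cusps and a single ideal triangulation (two ideal triangles), and any second such lamination transverse to it off the cusps would have to cross it, which the pants-like hypothesis $E(\Lambda_i)\cap E(\Lambda_j)=\{\text{cusps}\}$ and transversality jointly forbid once one checks the finitely many combinatorial possibilities. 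Everything else is an assembly of results already proved above, so the corollary follows.
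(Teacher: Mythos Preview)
Your proposal is correct and matches the paper's approach: the paper's own proof is the single sentence ``This is a direct consequence of Theorem~\ref{thm:fuchsianispcol3} and Theorem~\ref{thm:pcol3isconvergence},'' which is precisely the two-theorem combination you describe. You actually go further than the paper by explicitly addressing why the quotient cannot be the thrice-punctured sphere, a point the paper leaves implicit; your sketch there is on the right track, though it is worth sharpening: every simple geodesic on $S_{0,3}$ is an ideal arc between cusps, there are only six such arcs (three between distinct cusps and three loops $\alpha_a,\alpha_b,\alpha_c$), and every very-full $G$-invariant lamination descends to an ideal polygon decomposition that must use at least two of the three arcs between distinct cusps---so by pigeonhole any two very-full laminations share a leaf, and transversality already fails.
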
 
\begin{proof} 
This is a direct consequence of Theorem \ref{thm:fuchsianispcol3} and Theorem \ref{thm:pcol3isconvergence}.
\end{proof} 

\begin{cor} 
 Let $G$ be a torsion-free discrete subgroup of $\Homeop(S^1)$. Then $G$ admits pairwise strongly transverse three very-full invariant laminations if and only if $G$ is a Fuchsian group whose quotient has no cusps. 
\end{cor}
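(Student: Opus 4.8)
The plan is to deduce this corollary directly from the Main Theorem (Corollary~\ref{cor:pcol3ifffuchsian}), by observing that, for a collection of \emph{very-full} $G$-invariant laminations, being pairwise strongly transverse is exactly the pants-like condition in the special case in which $G$ has no parabolic elements; Corollary~\ref{cor:parabolicfixedpoints} is what links the two notions.

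For the forward implication, suppose $G$ admits three pairwise strongly transverse very-full invariant laminations $\Lambda_1,\Lambda_2,\Lambda_3$. By Lemma~\ref{lem:veryfulllam} each $\Lambda_i$ is dense, and strong transversality trivially implies pairwise transversality, so this collection already witnesses that $G$ is $\COL_3$. The key step is to rule out parabolics: if $g\in G$ were parabolic with fixed point $x$, then Corollary~\ref{cor:parabolicfixedpoints} would force $x$ to be an endpoint of a leaf of every $\Lambda_i$, so $x\in E(\Lambda_1)\cap E(\Lambda_2)$, contradicting strong transversality. Hence $G$ has no cusp points, and the identity $E(\Lambda_i)\cap E(\Lambda_j)=\emptyset$ for $i\neq j$ is precisely the statement that $\{\Lambda_1,\Lambda_2,\Lambda_3\}$ is a pants-like collection. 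Thus $G$ is a pants-like $\COL_3$ group, so by Corollary~\ref{cor:pcol3ifffuchsian} it is a Fuchsian group whose quotient is not the thrice-punctured sphere; and since $G$ contains no parabolic element, $\HH^2/G$ has no cusps.

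For the reverse implication, suppose $G$ is Fuchsian and $\HH^2/G$ has no cusps. Then in particular $\HH^2/G$ is not the thrice-punctured sphere, so Theorem~\ref{thm:fuchsianispcol3} yields a pants-like collection $\{\Lambda_1,\Lambda_2,\Lambda_3\}$ of three very-full $G$-invariant laminations. Because $\HH^2/G$ has no cusps, $G$ contains no parabolic element, so its set of cusp points is empty; the pants-like identity then reads $E(\Lambda_i)\cap E(\Lambda_j)=\emptyset$ for $i\neq j$, i.e.\ the three laminations are pairwise strongly transverse, as required. The argument needs no new estimates; its only substantive ingredient is the observation above, where Corollary~\ref{cor:parabolicfixedpoints} does the work of translating strong transversality into the absence of parabolics, and I expect that translation step to be the only point requiring any care.
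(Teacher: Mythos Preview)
Your proof is correct and follows essentially the same route as the paper's: both deduce the corollary from the Main Theorem by observing that pairwise strong transversality of very-full invariant laminations is exactly the pants-like condition in the absence of parabolics. The paper compresses this into a single sentence, while you spell out the details, in particular invoking Corollary~\ref{cor:parabolicfixedpoints} to show that strong transversality forces $G$ to have no parabolic elements; this is precisely the content the paper leaves implicit.
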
 
\begin{proof} Replacing the pants-like property by the pairwise strong transversality is equivalent to saying that there is no parabolic elements. Hence, this is an immediate corollary of Main Theorem. 
\end{proof} 

\begin{cor}
Let $G$ be a torsion-free discrete pants-like $\COL_3$ group. Then $G$-action on $S^1$ is minimal if and only if $G$ is a pants-decomposable surface group.
\end{cor}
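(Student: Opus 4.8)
The plan is to combine the Main Theorem (Corollary \ref{cor:pcol3ifffuchsian}) with the classical dichotomy between Fuchsian groups of the first and second kind. By the Main Theorem, $G$ is (conjugate to) a discrete torsion-free subgroup of $\PR$, so from now on identify $S^1$ with $\partial_\infty\HH^2$, write $S = \HH^2/G$, and recall $S$ is not the thrice-punctured sphere. The two elementary cases $G = \{1\}$ and $G \cong \ZZ$ are neither minimal on $S^1$ nor pants-decomposable surface groups, so we may assume $G$ is non-elementary. For such $G$, every hyperbolic or parabolic fixed point of $G$ lies in $\overline{Gx}$ for each $x \in S^1$, so $F(G) \subseteq \overline{Gx}$; together with $L(G) = \overline{F(G)}$ from Lemma \ref{lem:limitsetisfixedpoints}, this shows the $G$-action on $S^1$ is minimal if and only if $L(G) = S^1$, i.e.\ iff $G$ is of the first kind. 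Thus it suffices to prove that $G$ is of the first kind if and only if $S$ is pants-decomposable.

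For the ``only if'' direction, suppose $S$ is pants-decomposable with pants-curve $X_0$, and let $\Lambda_1$ be the very-full $G$-invariant lamination built from $X_0$ as in the proof of Theorem \ref{thm:fuchsianispcol3} (split each pair of pants into two ideal triangles). As noted there, every end of every leaf of $\Lambda_1$ is a fixed point of a hyperbolic element of $G$ (for leaves covering $X_0$ or spiralling onto it) or a parabolic fixed point (for leaves running into a cusp), so $E_{\Lambda_1} \subseteq F(G)$. By Lemma \ref{lem:veryfulllam}, $\Lambda_1$ is dense, so $E_{\Lambda_1}$ is dense in $S^1$; hence $\overline{F(G)} = S^1$ and, by Lemma \ref{lem:limitsetisfixedpoints}, $L(G) = S^1$, so $G$ is of the first kind and the action is minimal.

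For the ``if'' direction, suppose the action is minimal, so $\Omega(G) = \emptyset$: an orbit accumulating on a point of $\Omega(G)$ would be discrete there and never dense, and $\Omega(G) = S^1$ is impossible since then $G$ would act properly discontinuously on $S^1$. If $S$ has finite area then, being distinct from the thrice-punctured sphere, it is pants-decomposable. If $S$ has infinite area, invoke Theorem 3.6.2 of \cite{Hubb}: there is a multi-curve $X$ with point set $Z$ such that $S \setminus \overline{Z}$ consists of pairs of pants, half-annuli, and half-planes, with $\overline{Z}\setminus Z$ the bounding geodesics of the half-planes. A half-annulus or half-plane component would contribute an open arc of $S^1$ lying in $\Omega(G)$, contradicting $\Omega(G) = \emptyset$; so no such components occur. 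Moreover $X$ can be taken non-empty, since $S$ is not a disk or annulus ($G$ non-elementary) and not a single pair of pants (that would force $S$ to be second kind or the thrice-punctured sphere), hence carries an essential non-peripheral simple closed curve. Therefore $S \setminus \overline{Z}$ is a union of pairs of pants, i.e.\ $S$ is pants-decomposable and $G$ is a pants-decomposable surface group.

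The main obstacle is the ``if'' direction: translating the purely dynamical hypothesis $\Omega(G) = \emptyset$ into the combinatorial conclusion that some pants-curve of $S$ has only pair-of-pants complementary regions. This is exactly where Theorem 3.6.2 of \cite{Hubb} and the exclusion of the thrice-punctured sphere are needed, and one must separately dispose of the degenerate possibilities ($S$ a disk, annulus, or single pair of pants), which is why the reduction to non-elementary $G$ is made at the outset.
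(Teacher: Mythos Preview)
Your argument is correct and follows essentially the same route as the paper: invoke the Main Theorem to identify $G$ with a Fuchsian group, then for the harder implication apply Theorem 3.6.2 of \cite{Hubb} and observe that any half-annulus or half-plane component forces a free arc in $S^1$, contradicting minimality. Your framing via the first/second kind dichotomy and the identity $L(G)=\overline{F(G)}$ is a mild repackaging of the paper's fundamental-domain argument, and your treatment of the forward implication (pants-decomposable $\Rightarrow$ minimal) via $E_{\Lambda_1}\subseteq F(G)$ actually supplies more detail than the paper, which simply asserts this direction as clear. You are also more careful than the paper in disposing of the elementary cases and checking that $X$ can be taken non-empty.

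One cosmetic correction: you have swapped the labels ``if'' and ``only if''. In the statement ``minimal if and only if pants-decomposable'', the \emph{if} direction is ``pants-decomposable $\Rightarrow$ minimal'' (your first paragraph) and the \emph{only if} direction is ``minimal $\Rightarrow$ pants-decomposable'' (your second paragraph). The mathematics is unaffected.
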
 
\begin{proof}
One direction is clear from the observation that the fundamental group of a pants-decomposable surface acts minimally on $\partial_\infty \HH^2$. 
Suppose $G$ is a pants-like $\COL_3$ group. By Theorem \ref{thm:pcol3isconvergence}, $G$ is a Fuchsian group. Let $S$ be the quotient surface $\HH^2 / G$. Note that $S$ is not the thrice-punctured sphere, since it has infinitely many transverse laminations. If $S$ is not pants-decomposable, then there still exists a multi-curve which decomposes $S$ into pairs of pants, half-annuli, and half-planes (Theorem 3.6.2. of \cite{Hubb}). Thus any fundamental domain of $G$-action on $\overline{\HH^2}$ contains some open arcs in $S^1 = \partial_\infty \HH^2$. Let $I$ be a proper closed sub-arc of such an open arc. Since it is taken as a subset of a fundamental domain, the orbit closure of $I$ is a closed invariant subset of $S^1$ which has non-empty interior and is not the whole $S^1$. This contradicts to the minimality of $G$-action. 
\end{proof}

\begin{cor} Let $M$ be a oriented hyperbolic 3-manifold whose fundamental group is finitely generated. If $\pi_1(M)$ admits a pants-like $\COL_3$-representation into $\Homeop(S^1)$, then $M$ is a homeomorphic to $S \times \RR$ for some surface $S$. If we further assume that $M$ has no cusps and is geometrically finite, then $M$ is either quasi-Fuchsian or Schottky. 
\end{cor}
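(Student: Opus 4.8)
The plan is to reduce the statement to the Main Theorem (Corollary \ref{cor:pcol3ifffuchsian}) together with standard facts about the topology of hyperbolic $3$-manifolds and about geometrically finite Kleinian groups; all of the ``laminations on $S^1$'' content is absorbed by the Main Theorem. First I would note that $\pi_1(M)$ is torsion-free and that the given pants-like $\COL_3$-representation $\rho$ is faithful and discrete, so Corollary \ref{cor:pcol3ifffuchsian} applies and shows that $\rho(\pi_1(M))$ is a Fuchsian group; thus $\pi_1(M) \cong \Gamma$ for a torsion-free discrete $\Gamma \subset \PR$ with $\Sigma := \HH^2/\Gamma$ not the thrice-punctured sphere. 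Since $\pi_1(M)$ is finitely generated, $\Sigma$ is an orientable finite-type hyperbolic surface (orientable because $\Gamma \subset \PR = \mathrm{Isom}^+(\HH^2)$), and hence $\pi_1(M) \cong \pi_1(\Sigma)$ is either a closed orientable surface group or a finitely generated free group (allowing the degenerate cases $\pi_1(M) \cong 1$ or $\ZZ$).

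Next I would bring in the Tameness Theorem (Agol, Calegari--Gabai): since $M$ is a hyperbolic $3$-manifold with finitely generated fundamental group, $M \cong \operatorname{int}(\overline M)$ for a compact orientable $3$-manifold $\overline M$ with $\pi_1(\overline M) \cong \pi_1(M)$. As $\widetilde M = \HH^3$ is contractible, $\pi_2(\overline M) = 0$, so $\overline M$ is irreducible by the Sphere Theorem. If $\pi_1(M)$ is a closed orientable surface group, a compact orientable irreducible $3$-manifold with such a fundamental group is an $I$-bundle over the surface, necessarily trivial by orientability, so $\overline M \cong \Sigma \times I$ (Stallings--Waldhausen). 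If $\pi_1(M)$ is free of rank $n$, a compact orientable irreducible $3$-manifold with free $\pi_1$ is a handlebody $H_n$ (it necessarily has nonempty boundary here), and $H_n \cong F \times I$ where $F$ is a disk with $n$ holes. In every case $\overline M \cong F \times I$ for a compact surface $F$, whence $M \cong \operatorname{int}(F) \times \RR =: S \times \RR$, which is the first assertion.

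For the second assertion I would add the hypotheses that $M$ has no cusps and is geometrically finite, which together say $M$ is convex cocompact. If $\pi_1(M)$ is a closed surface group, then $M$ is a convex cocompact hyperbolic structure on $\Sigma \times \RR$ with $\Sigma$ closed, hence quasi-Fuchsian (its limit set is a quasicircle and its domain of discontinuity has two invariant topological-disk components; this is Marden's stability/isomorphism theory together with Bers' description of such manifolds). If $\pi_1(M)$ is free, then $\pi_1(M)$ is realized as a finitely generated, purely loxodromic (because there are no parabolics), geometrically finite free Kleinian group, and such a group is a Schottky group by classical Kleinian-group theory; equivalently $M$ is a handlebody carrying a convex cocompact structure, i.e.\ a Schottky manifold.

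The main obstacle I anticipate is not any single deep ingredient but lining up hypotheses at two places: (i) making sure the pants-like $\COL_3$-representation genuinely has \emph{discrete} Fuchsian image, so that Corollary \ref{cor:pcol3ifffuchsian} applies verbatim; and (ii) in the geometrically finite case, translating ``$M$ has no cusps'' into the purely loxodromic condition on $\Gamma \hookrightarrow \mathrm{Isom}^+(\HH^3)$ and then invoking the correct recognition theorem (quasi-Fuchsian for surface groups, Schottky for free groups). The $3$-manifold topology --- recognizing $\overline M$ as a trivial $I$-bundle or a handlebody --- is where one leans hardest on external input, but the tameness theorem, the Sphere Theorem, the $I$-bundle recognition theorem, and the handlebody characterization of irreducible $3$-manifolds with free $\pi_1$ are all standard.
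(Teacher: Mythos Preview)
Your approach is correct and coincides with the paper's: apply Corollary~\ref{cor:pcol3ifffuchsian} to conclude $\pi_1(M)$ is (isomorphic to) a surface group, then invoke the Tameness Theorem. The paper's proof is only two sentences and cites Tameness without spelling out the Sphere Theorem, the $I$-bundle recognition for closed-surface groups, or the handlebody characterization for free $\pi_1$; it also does not address the quasi-Fuchsian/Schottky assertion at all. Your writeup simply fills in these standard $3$-manifold and Kleinian-group facts, and your flagged concern about discreteness of the image of $\rho$ is legitimate---the paper implicitly assumes it so that Corollary~\ref{cor:pcol3ifffuchsian} applies verbatim.
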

\begin{proof} The existence of a pants-like $\COL_3$-representation into $\Homeop(S^1)$ implies that $\pi_1(M)$ is isomorphic to $\pi_1(S)$ for a hyperbolic surface $S$. Now it is a consequence of the Tameness theorem (independently proved by Agol \cite{AgolTame} and Calegari-Gabai \cite{CalegariGabaiTame}). 
\end{proof}

\begin{rmk} There is an analogy between the cardinality of the set of ends of groups and the cardinality of the paths-like collection of laminations that subgroups of $\Homeop(S^1)$ can have. In Theorem \ref{thm:fuchsianispcol3}, one can work harder to show that Fuchsian groups are in fact pants-like $\COL_\infty$ groups. The result of Section 3 says there are pants-like $\COL_2$ groups which are not pants-like $\COL_3$ groups (we will see the distinction in more detail in the next section). Hence, any torsion-free discrete subgroup of $\Homeop(S^1)$ is a pants-like $\COL_n$ group where $n$ is either $0, 1, 2,$ or infinity, while the cardinality of the set of ends of a group has the same possibilities.  \end{rmk}

\begin{rmk} In Theorem \ref{thm:pcol3isconvergence}, it is easy to see that the torsion-free assumption is not necessary. We conjecture that main theorem (Corollary \ref{cor:pcol3ifffuchsian}) could be stated without the torsion-free assumption. To show that, one needs to construct pants-like collection of three very-full laminations on hyperbolic orbifolds. It is not too clear how to do so with simple geodesics. 
\end{rmk}

\section{Pants-like $\COL_2$ Groups and Some Conjectures}
\label{sec:pcol2}
 We saw that being torsion-free discrete pants-like $\COL_3$ is equivalent to being Fuchsian. In this section, we will try to see what is still true if we have one less lamination. For the rest of this section, we fix a pants-like $\COL_2$ group $G$ with a pants-like collection $\{\Lambda_1, \Lambda_2\}$ of $G$-invariant laminations. For the sake of simplicity, we also assume that $|\Fix_g| < \infty$ for each $g \in G$.

 \begin{prop}
 \label{prop:nonparabolicofpcol2}
 Let $g$ be a non-parabolic element of $G$. Then $g$ have no parabolic fixed point. Hence either $g$ is elliptic or $g$ has even number of fixed points which alternate between attracting fixed points and repelling fixed points along $S^1$. 
 \end{prop}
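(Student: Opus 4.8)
The plan is to reduce everything to the two-lamination analogues of the lemmas already established for pants-like $\COL_3$ groups. Suppose $g \in G$ is non-parabolic. I would first rule out that $g$ has a parabolic fixed point, arguing by contradiction: suppose $x$ is a fixed point of $g$ at which the local dynamics is sink-on-one-side, source-on-the-other (a parabolic fixed point). Since $g$ is not itself parabolic, $g$ must have at least one other fixed point, so in particular $|\Fix_g| \ge 2$. Now the key point is that $x$, being a parabolic \emph{fixed point} (i.e.\ a fixed point of the parabolic type of some element of $G$, hence a cusp point), is an endpoint of leaves of \emph{both} $\Lambda_1$ and $\Lambda_2$ by Corollary \ref{cor:parabolicfixedpoints}. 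For each $i$, pick a leaf $l_i$ of $\Lambda_i$ with $x$ as an endpoint; the other endpoint $y_i$ lies in some component $I$ of $S^1 \setminus \Fix_g$ with $x$ as one of its endpoints (if $y_i$ were itself fixed by $g$, I would instead directly get a contradiction with transversality below). Because $x$ is a parabolic-type fixed point of $g$, on the side $I$ the point $x$ behaves as a sink or source for $g$, so either $g^n(l_i)$ or $g^{-n}(l_i)$ converges to the leaf joining $x$ to the other endpoint $b$ of $I$; invariance and closedness of $\Lambda_i$ then force $(x,b) \in \Lambda_i$. Since this holds for both $i=1$ and $i=2$, the laminations $\Lambda_1$ and $\Lambda_2$ share the leaf $(x,b)$, contradicting transversality. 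Hence $g$ has no parabolic fixed point.

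Having excluded parabolic fixed points, every fixed point of $g$ is either attracting or repelling. The next step is to show these alternate around $S^1$ and hence are even in number. This is the standard alternation argument: between two consecutive fixed points of $g$ on $S^1$, the orbit of every interior point is monotone and converges to one endpoint in forward time and the other in backward time, so each boundary fixed point of a complementary interval $I$ of $S^1 \setminus \Fix_g$ is attracting from the $I$-side or repelling from the $I$-side, and a fixed point cannot be attracting (resp.\ repelling) from both adjacent sides without being a parabolic-type fixed point — which we have just excluded. Therefore the fixed points alternate attracting/repelling, which forces $|\Fix_g|$ to be even (when nonzero). The remaining alternative, $|\Fix_g| = 0$, is exactly the elliptic case, completing the dichotomy.

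I expect the main obstacle to be the bookkeeping in the first step when the other endpoint $y_i$ of the chosen leaf happens to be a fixed point of $g$, or when the various leaves with endpoint $x$ are arranged so that naively taking limits produces a leaf on the wrong side of $x$; one must be careful to use the \emph{parabolic-type} local dynamics at $x$ (sink on one side, source on the other) rather than assuming $x$ is an honest attractor or repeller, and to choose the leaf on the side where the relevant iterates contract. Once the correct side is selected, the convergence of $g^{\pm n}(l_i)$ to a leaf of the form $(x,b)$ and the ensuing violation of transversality is routine. Everything else follows from the elementary dynamics of homeomorphisms of $S^1$ with finitely many fixed points, as in the proof of Lemma \ref{lem:parabolichyperbolic}.
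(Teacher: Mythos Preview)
Your argument has a genuine gap at the step where you assert that $x$ is a cusp point. You write: ``$x$, being a parabolic fixed point (i.e.\ a fixed point of the parabolic type of some element of $G$, hence a cusp point), is an endpoint of leaves of both $\Lambda_1$ and $\Lambda_2$ by Corollary~\ref{cor:parabolicfixedpoints}.'' This conflates two distinct notions. A \emph{parabolic-type fixed point} of $g$ is a point where the local dynamics of $g$ is sink-on-one-side, source-on-the-other; a \emph{cusp point} is the fixed point of some parabolic element of $G$ (an element with exactly one fixed point). The hypothesis gives you only the former, and Corollary~\ref{cor:parabolicfixedpoints} applies only to the latter. Since $g$ itself is assumed non-parabolic, there is no obvious parabolic element of $G$ fixing $x$, so you cannot invoke that corollary.

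The paper's proof supplies exactly the missing ingredient: it first shows (via Lemma~\ref{lem:leafconnectingfixedpoints} and the second half of the proof of Corollary~\ref{cor:atmost2fixedpoints}) that no fixed point of $g$ is a cusp point, then uses the pants-like condition to conclude that each fixed point $p$ lies in $E_{\Lambda_i}$ for at most one $i$, hence has a rainbow in the other lamination by Theorem~\ref{thm:rainbow}. The punchline is the observation you never reach: a parabolic-type fixed point cannot have a rainbow in any $g$-invariant lamination, because a rainbow leaf $l$ and its image $g(l)$ would be linked (one endpoint moves toward $x$, the other away). Ironically, this same no-rainbow observation would repair your argument: it forces $x \in E_{\Lambda_1} \cap E_{\Lambda_2}$, whence pants-like \emph{does} make $x$ a cusp point, and then your iteration to a common leaf $(x,b)$ contradicts transversality. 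But as written, the bridge from ``parabolic-type fixed point of $g$'' to ``cusp point'' is missing.
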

 \begin{proof}
 Suppose $\Fix_g \neq \emptyset$. Let $I$ be a connected component of $S^1 \setminus \Fix_g$ with endpoints $a$ and $b$. In the previous section, we saw that for each $i$, either $a \in E_{\Lambda_i}$ or $b \in E_{\Lambda_i}$. We also know that none of $a$ and $b$ can be the fixed point of a parabolic element (see the second half of the proof of Corollary \ref{cor:atmost2fixedpoints}). Hence the pants-like property implies that there is no $i$ such that both $a$ and $b$ are in $E_{\Lambda_i}$. In particular, this implies that for each $p \in \Fix_g$, there exists $i \in \{1, 2\}$ so that $p$ is not in $E_{\Lambda_i}$. But this implies that there is a rainbow in $\Lambda_i$ at $p$. But a parabolic fixed point cannot have a rainbow. This proves the claim.  
 \end{proof} 
 
 \begin{cor} 
 Each elliptic element of $G$ is either of finite order or pseudo-Anosov-like. 
 \end{cor}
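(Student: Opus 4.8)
The plan is to take an elliptic element $g\in G$ of infinite order and show that it is pseudo-Anosov-like. First I would rule out irrational rotation number exactly as in the proof of Lemma~\ref{lem:ellipticistorstion}, whose relevant part uses only a single very-full invariant lamination and so applies verbatim here: if $g$ had irrational rotation number it would be conjugate to a rigid irrational rotation, which admits no invariant lamination at all (contradicting the presence of the very-full, hence dense, lamination $\Lambda_1$), or else $g$ would be a Denjoy-type homeomorphism semiconjugate to an irrational rotation via a monotone map $f$ collapsing the wandering intervals; in the latter case every leaf $(x,y)$ of any $g$-invariant lamination must satisfy $f(x)=f(y)$, since otherwise $f$ would carry the orbit closure of $(x,y)$ to an invariant lamination of an irrational rotation, which necessarily has linked leaves. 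This forces all leaves of $\Lambda_1$ into the closures of the wandering intervals, so $\Lambda_1$ has an infinite-sided gap, contradicting very-fullness. Hence the rotation number of $g$ is rational, say $p/q$ in lowest terms.

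Then $g^q$ has rotation number $0$, so $\Fix_{g^q}\neq\emptyset$ (a circle homeomorphism of rotation number zero has a fixed point). If $g^q=\id$ then $g$ has finite order and there is nothing to prove, so assume $g^q\neq\id$. Since $g$ commutes with $g^q$ it preserves the set $\Fix_{g^q}$; were this set a single point, $g$ would fix that point, contradicting $\Fix_g=\emptyset$. Thus $|\Fix_{g^q}|\geq 2$, so $g^q$ is neither elliptic nor parabolic, and Proposition~\ref{prop:nonparabolicofpcol2} gives $|\Fix_{g^q}|=2n$ for some $n\geq 1$ with the fixed points alternating between attracting and repelling around $S^1$.

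Finally I would promote $n\geq 1$ to $n\geq 2$. Because conjugation by $g$ fixes $g^q$, the homeomorphism $g$ sends attracting fixed points of $g^q$ to attracting fixed points of $g^q$: if $g^{qk}(x)\to a$ for all $x$ in a neighborhood of $a$, then $g^{qk}(g(x))=g(g^{qk}(x))\to g(a)$ for all $g(x)$ in a neighborhood of $g(a)$. So $g$ induces an orientation-preserving self-bijection of the $n$ attracting fixed points of $g^q$, and since $\Fix_g=\emptyset$ this permutation is fixed-point-free, which forces $n\geq 2$. Taking $m=q$, the element $g^q$ then witnesses that $g$ is pseudo-Anosov-like, as desired.

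The step I expect to require the most care is the irrational-rotation-number case of the first paragraph, namely verifying that a Denjoy-type homeomorphism cannot preserve a very-full lamination; but since the corresponding argument in Lemma~\ref{lem:ellipticistorstion} never invokes the third lamination, it carries over to the pants-like $\COL_2$ setting unchanged.
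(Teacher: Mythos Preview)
Your proposal is correct and follows the same route as the paper, which simply cites Lemma~\ref{lem:ellipticistorstion} and Proposition~\ref{prop:nonparabolicofpcol2}; you have merely unpacked the argument, correctly noting that the irrational-rotation-number half of Lemma~\ref{lem:ellipticistorstion} uses only a single very-full invariant lamination and hence carries over to the $\COL_2$ setting. Your explicit verification that $n\geq 2$---via the observation that $g$ permutes the attracting fixed points of $g^q$ without fixed point---fills in a step the paper's one-line proof leaves to the reader.
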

 \begin{proof} 
 This is a consequence of Lemma \ref{lem:ellipticistorstion} and Proposition \ref{prop:nonparabolicofpcol2}. 
 \end{proof} 
 
 We have proved the following. 
 \begin{thm}[Classification of Elements of Pants-like $\COL_2$ Groups]
 Let $G$ be as defined at the beginning of the section. The elements of $G$ are either torsion, parabolic, hyperbolic or pseudo-Anosov-like. 
 \end{thm}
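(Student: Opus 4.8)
The plan is to read the classification off the two preceding results of this section, organized by the size of $\Fix_g$; recall that $|\Fix_g|<\infty$ for every $g\in G$ is a standing hypothesis here.

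First I would dispose of the two extreme cases. If $|\Fix_g|=1$ then $g$ is parabolic by definition, and there is nothing to prove. If $|\Fix_g|=0$ then $g$ is elliptic, and the corollary immediately preceding this theorem asserts that $g$ is either of finite order (i.e.\ torsion) or pseudo-Anosov-like; the only substantive point there is that $g$ cannot have irrational rotation number, for which the argument of Lemma \ref{lem:ellipticistorstion} applies verbatim: an irrational rotation has no invariant lamination, and a Denjoy blow-up of one supports only laminations carried by the blown-up orbit, none of which is very-full.

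The remaining case is $2\le|\Fix_g|<\infty$. Here Proposition \ref{prop:nonparabolicofpcol2} does the work: it shows $g$ has no parabolic fixed point, because for each component $I$ of $S^1\setminus\Fix_g$ at least one endpoint fails to lie in $E_{\Lambda_i}$ for some $i$, hence carries a rainbow in $\Lambda_i$, while a point with one-sided sink/source dynamics cannot carry a rainbow (as in the proof of Corollary \ref{cor:parabolicfixedpoints}). Consequently every point of $\Fix_g$ is attracting from both sides or repelling from both sides, and since $g$ preserves orientation and $\Fix_g$ is finite, consecutive fixed points along $S^1$ necessarily have opposite type; thus $|\Fix_g|=2n$ with the points alternating between attracting and repelling. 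If $n=1$ this is North-South dynamics, so $g$ is hyperbolic; if $n\ge 2$, taking $m=1$ in the definition exhibits $g$ as pseudo-Anosov-like. Combining the three cases shows every element of $G$ is torsion, parabolic, hyperbolic, or pseudo-Anosov-like.

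I do not anticipate a genuine obstacle at this stage: the real content has already been carried out in Proposition \ref{prop:nonparabolicofpcol2} (the rainbow argument ruling out parabolic fixed points of non-parabolic elements) and in the elliptic corollary (ruling out irrational rotation numbers). What remains is the elementary bookkeeping that a finite nonempty fixed-point set of an orientation-preserving circle homeomorphism, none of whose points is a one-sided sink/source, has even cardinality with alternating types, matching exactly the hyperbolic / pseudo-Anosov-like dichotomy. The one place to be slightly careful is the elliptic subcase with rational rotation number: one must check that a power of $g$ with nonempty fixed-point set cannot be hyperbolic, only pseudo-Anosov-like, since otherwise $g$ would have to interchange an attracting and a repelling fixed point of that power while conjugating it to itself, which is impossible.
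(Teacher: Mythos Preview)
Your proposal is correct and follows exactly the route the paper intends: the paper gives no proof environment at all, only the sentence ``We have proved the following,'' meaning the theorem is simply the combination of Proposition~\ref{prop:nonparabolicofpcol2} and the corollary immediately preceding it. Your case split by $|\Fix_g|$ is the natural way to unpack that, and your final remark about why a power of a fixed-point-free $g$ cannot be hyperbolic (lest $g$ swap an attractor with a repeller while commuting with that power) is precisely the small check the paper's corollary leaves implicit when it cites Lemma~\ref{lem:ellipticistorstion} in the $\COL_2$ setting.
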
 
 
 \begin{conj}
Suppose $G$ is torsion-free discrete and $|\Fix_g| \le 2$ for each $g \in G$. Then $G$ is Fuchsian. 
\end{conj}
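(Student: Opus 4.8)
The plan is to show that a torsion-free discrete group $G\subset\Homeop(S^1)$ with a pants-like collection of two very-full laminations $\{\Lambda_1,\Lambda_2\}$ and with $|\Fix_g|\le 2$ for every $g\in G$ is in fact a pants-like $\COL_3$ group, so that Corollary \ref{cor:pcol3ifffuchsian} applies. The extra hypothesis $|\Fix_g|\le 2$ is precisely what rules out pseudo-Anosov-like elements, so by the classification theorem for pants-like $\COL_2$ groups every element of $G$ is torsion, parabolic, or hyperbolic; since $G$ is torsion-free, every nontrivial element is parabolic or hyperbolic, exactly as in a Fuchsian group. The heart of the matter is to manufacture a third very-full lamination $\Lambda_3$, transverse to both $\Lambda_1$ and $\Lambda_2$, whose ends meet $E(\Lambda_1)\cup E(\Lambda_2)$ only at cusp points of $G$.

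First I would run the convergence-group machinery of Section 7 directly on the pair $\{\Lambda_1,\Lambda_2\}$: the proof of Theorem \ref{thm:pcol3isconvergence} only ever uses, for a given pair of non-cusp points $p,q$, the existence of \emph{one} invariant very-full lamination none of whose leaves ends at $p$ or $q$ (this is exactly Remark \ref{rmk:pantslikeproperty}). With only two laminations in hand, the pants-like property gives that for each non-cusp $p$, at most one of $\Lambda_1,\Lambda_2$ has a leaf ending at $p$; so for a pair $\{p,q\}$ of non-cusp points there is always an index $i\in\{1,2\}$ with $p,q\notin E(\Lambda_i)$ \emph{unless} $p\in E(\Lambda_1)\setminus E(\Lambda_2)$ and $q\in E(\Lambda_2)\setminus E(\Lambda_1)$ (or vice versa). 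So the argument of Theorem \ref{thm:pcol3isconvergence} already shows $G$ acts properly discontinuously on the space of triples avoiding this bad configuration, and the task reduces to handling sequences $(h_i)$ whose dynamical triples $(x,y,z)\to(x',y',z')$ are forced into the bad configuration. Here I would exploit that a non-cusp point lying in $E(\Lambda_1)$ is, by Theorem \ref{thm:rainbow}, \emph{not} a rainbow point of $\Lambda_1$, hence \emph{is} a rainbow point of $\Lambda_2$ (since $p\notin E(\Lambda_2)$), and build the nested intervals $I_i^p$ from the $\Lambda_2$-rainbow at $p$ while building $I_i^q$ from the $\Lambda_1$-rainbow at $q$. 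The endpoints of these intervals are leaves of \emph{different} laminations, but they are still unlinked-within-each-lamination, and the key inequality argument ($h_i(I^z_i)\subset I^{x'}_i$ leading to a contradiction with $h_i(z_i)\to z'$) only used that each family $\{I^p_i\}$ consists of ends of leaves of a single lamination so that $h_i(I^z_i)$ cannot straddle a leaf endpoint it was built from; this survives. So $G$ is a convergence group, hence Fuchsian by the Convergence Group Theorem, and not the thrice-punctured sphere (which has no pants-curve, hence admits no very-full lamination from a pants-decomposition — alternatively, as already noted, it cannot carry infinitely many transverse laminations, but one only needs that it is excluded, which follows since a thrice-punctured sphere group does have a pants-like $\COL_3$ structure would be contradictory — more simply, if the quotient were the thrice-punctured sphere, the explicit description of its only very-full laminations forces $E(\Lambda_1)\cap E(\Lambda_2)$ to strictly exceed the cusp points).

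The main obstacle I anticipate is the bad-configuration case just described: a priori one must also treat the possibility that $x'$ is a non-cusp point of $E(\Lambda_1)$ while $z$ is a non-cusp point of $E(\Lambda_2)$ \emph{simultaneously with one of them being a cusp}, i.e. mixtures of the rainbow construction (Lemma \ref{lem:limitsetisfixedpoints}) and the parabolic nested-interval construction (Lemma \ref{lem:discontinuousactionapproximatedbyfixedpoints}) for the two points. This is bookkeeping rather than a new idea — for a cusp point one uses the $h$-translates of a leaf through it in \emph{either} lamination, and for a non-cusp rainbow point one uses whichever of $\Lambda_1,\Lambda_2$ rainbows there — but verifying that in every combination the three exclusive cases $h_i(I^{x'}_i)\subset I^z_i$, $\supset$, or union $=S^1$ persist requires care, exactly as in Figure \ref{fig:paraboliccases}. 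Once the convergence property is established the rest is immediate: $G$ is Fuchsian, torsion-free, with $|\Fix_g|\le 2$ so no pseudo-Anosov-like elements arise, and it is not the thrice-punctured sphere, completing the identification.

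\begin{rmk}
The converse is Theorem \ref{thm:fuchsianispcol3} together with Lemma \ref{lem:fixedpointisnotendpoint}: a Fuchsian group that is not the thrice-punctured sphere is pants-like $\COL_3$, a fortiori pants-like $\COL_2$, and every element of a Fuchsian group has at most two fixed points. So the conjecture, if proved along these lines, upgrades to an equivalence: a torsion-free discrete $G\subset\Homeop(S^1)$ with $|\Fix_g|\le 2$ for all $g$ is a pants-like $\COL_2$ group if and only if it is a Fuchsian group whose quotient is not the thrice-punctured sphere.
\end{rmk}
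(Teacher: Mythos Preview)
The statement you are attempting to prove is listed in the paper as a \emph{conjecture}; the paper does not supply a proof. So there is no ``paper's own proof'' to compare against, and the relevant question is simply whether your argument works.

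It does not, and the gap is precisely at the point you flag as ``this survives.'' In the proof of Theorem \ref{thm:pcol3isconvergence}, the nested intervals $I^z_i$ and $I^{x'}_i$ are both built from rainbows in a \emph{single} lamination $\Lambda_\alpha$. The reason the trichotomy
\[
h_i(I^z_i)\subset I^{x'}_i,\qquad h_i(I^z_i)\supset I^{x'}_i,\qquad h_i(I^z_i)\cup I^{x'}_i=S^1
\]
holds is that the endpoints of $I^z_i$ form a leaf of $\Lambda_\alpha$, so $h_i(\partial I^z_i)$ is again a leaf of $\Lambda_\alpha$, and this leaf cannot be \emph{linked} with the leaf $\partial I^{x'}_i$ of the same lamination. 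That is exactly what rules out the overlapping configuration of Figure \ref{fig:linkedcase}. In your bad-configuration case you build $I^z_i$ from a $\Lambda_2$-rainbow and $I^{x'}_i$ from a $\Lambda_1$-rainbow; then $h_i(\partial I^z_i)$ is a leaf of $\Lambda_2$ while $\partial I^{x'}_i$ is a leaf of $\Lambda_1$, and leaves of two transverse laminations are certainly allowed to link (indeed, in the pants-decomposition examples they link densely). So the forbidden overlap of Figure \ref{fig:linkedcase} is no longer forbidden, the trichotomy fails, and the contradiction does not go through.

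This is not a bookkeeping issue: it is the essential reason the paper needs three laminations rather than two. With three pants-like laminations, any non-cusp point lies in $E(\Lambda_i)$ for at most one $i$, so given two non-cusp points $p,q$ one can always find a single $\Lambda_\alpha$ with $p,q\notin E(\Lambda_\alpha)$ (pigeonhole on three indices versus at most two obstructions). With only two laminations that pigeonhole fails, and your attempt to mix rainbows from different laminations runs into the linking obstruction above. A genuine proof of the conjecture would need a new idea to control the sequence $(h_i)$ in the bad configuration --- for instance, exploiting the hypothesis $|\Fix_g|\le 2$ more directly than through the classification theorem, or producing a third lamination by some other means --- and the paper leaves this open.
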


 For each pseudo-Anosov-like element $g$ of $G$, let $n = n(g)$ be the smallest positive number such that $g^n$ have fixed points. The boundary leaves of the convex hull of the attracting fixed points form a ideal polygon, we call it the \emph{attracting polygon} of $g$. The \emph{repelling polygon} of $g$ is defined similarly. 
 
 \begin{thm}
 \label{thm:polygonsinlam}
Let $G, \Lambda_1, \Lambda_2$ be as defined at the beginning of the section.  Suppose that there exists $g \in G$ which has more than two fixed points (so there are at least 4 fixed points).  Then each $\Lambda_i$ contains either the attracting polygon of $g$ or the repelling polygon of $g$. \end{thm}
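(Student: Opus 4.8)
The plan is to exploit the same dynamical machinery that drove the classification of elements: rainbows (Theorem~\ref{thm:rainbow}), the fact that a leaf with one end fixed by a ``hyperbolic-type'' map converges under iteration to a leaf joining two fixed points (as in Lemma~\ref{lem:hyperbolicend} and Lemma~\ref{lem:leafconnectingfixedpoints}), and the pants-like/transversality constraint. Fix $g$ with $|\Fix_g| = 2n \ge 4$; by Proposition~\ref{prop:nonparabolicofpcol2} its fixed points alternate attracting/repelling along $S^1$, and none is a parabolic fixed point of $G$. Let $I$ be a connected component of $S^1 \setminus \Fix_g$ with endpoints $a, b$ (so one of them is attracting, the other repelling for $g$). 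By the argument in Lemma~\ref{lem:leafconnectingfixedpoints}, at least one of $a, b$ is an endpoint of a leaf of each $\Lambda_i$; since neither is a cusp point, the pants-like property forbids both $a$ and $b$ from lying in $E(\Lambda_i)$ for the same $i$. Hence for each component $I$ of $S^1\setminus\Fix_g$, exactly one endpoint lies in $E(\Lambda_1)$ and the other in $E(\Lambda_2)$.

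Next I would upgrade ``endpoint of a leaf'' to ``endpoint of a leaf lying in the lamination's polygon.'' Suppose $a \in E(\Lambda_1)$. I claim $a$ is a vertex of the attracting polygon or the repelling polygon of $g$ inside $\Lambda_1$. Since $a\in E(\Lambda_1)$, take a leaf $l$ of $\Lambda_1$ with endpoint $a$ and other endpoint $c$. Using that $a$ is attracting (or repelling) for $g$ and running the convergence argument of Lemma~\ref{lem:leafconnectingfixedpoints}/Lemma~\ref{lem:hyperbolicend} with the cyclic group $\langle g\rangle$ (or $\langle g^{n(g)}\rangle$), some forward or backward iterate $g^{\pm m}(l)$ converges to a leaf joining $a$ to an adjacent fixed point of $g$ of the opposite type; iterating this around $S^1$ and using closedness of $\Lambda_1$ in $\mathcal{M}$ produces leaves of $\Lambda_1$ joining consecutive fixed points of $g$ of a fixed ``parity.'' The alternation of types forces these leaves to bound the convex hull of one of the two sets (attracting or repelling), i.e. $\Lambda_1$ contains either the attracting polygon or the repelling polygon of $g$. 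The transversality of $\Lambda_1$ and $\Lambda_2$ then prevents $\Lambda_1$ and $\Lambda_2$ from both containing the same polygon's boundary leaves (the leaves would coincide), so if $\Lambda_1$ contains the attracting polygon, $\Lambda_2$ must contain the repelling one, and vice versa — and a parity/counting argument across all components $I$ of $S^1\setminus\Fix_g$ shows this assignment is globally consistent.

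The main obstacle I anticipate is the bookkeeping in the middle step: showing that the individual leaves of $\Lambda_i$ one gets at the various fixed points of $g$ fit together into the \emph{entire} boundary of one convex hull, rather than, say, $\Lambda_i$ containing the ``attracting'' edge at one fixed point and the ``repelling'' edge at another. Here one must use that a leaf of $\Lambda_i$ with both endpoints in $\Fix_g$ cannot cross another such leaf (unlinkedness) together with the alternation of attracting and repelling points: a leaf joining an attracting fixed point to a non-adjacent repelling one would have to be crossed by the edges of the complementary convex hull, contradicting that $\Lambda_i$ is a lamination. So the edges forced into $\Lambda_i$ are exactly those joining fixed points of one type that are ``consecutive among that type,'' which is precisely the boundary of one convex hull; and once one edge of, say, the attracting polygon is in $\Lambda_i$, applying $g$ (which permutes the fixed points cyclically) sweeps out all of them. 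Combining this with the pairing from the first paragraph gives that one of the two laminations captures the attracting polygon and the other the repelling polygon, completing the proof.
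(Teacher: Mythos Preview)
Your first paragraph is correct and matches the paper exactly: from Lemma~\ref{lem:leafconnectingfixedpoints} and the pants-like condition you get that, writing $\Fix_g = \{p_1,\ldots,p_{2n}\}$ in cyclic order, $p_i \in E_{\Lambda_1}$ if and only if $i$ is odd (after relabelling), and the opposite parity for $\Lambda_2$.

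The gap is in your middle step. You propose to take a leaf $l=(a,c)$ with $a=p_1\in E_{\Lambda_1}$ and iterate. Two concrete problems:
\begin{itemize}
\item You claim the limit of $g^{\pm m}(l)$ joins $a$ to ``an adjacent fixed point of $g$ of the opposite type.'' But the adjacent fixed points $p_2,p_{2n}$ are of even parity, hence \emph{not} in $E_{\Lambda_1}$ by your own first paragraph; so $(p_1,p_2)$ and $(p_1,p_{2n})$ cannot be leaves of $\Lambda_1$. What iteration actually gives you (after excluding those cases) is a leaf $(p_1,p_{2k+1})$ for \emph{some} odd index $2k+1$, not necessarily the polygon edge $(p_1,p_3)$ or $(p_1,p_{2n-1})$.
\item Your proposed fix, ``applying $g$ (which permutes the fixed points cyclically) sweeps out all of them,'' fails: in this theorem $g$ itself has the fixed points $p_i$, so $g$ fixes each $p_i$ individually and does not permute them. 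Applying $g$ to a leaf joining two fixed points returns the same leaf.
\end{itemize}

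The paper's argument avoids both issues by working at the \emph{other} parity. For each even $j$ one has $p_j\notin E_{\Lambda_1}$, so Theorem~\ref{thm:rainbow} gives a rainbow leaf $l$ at $p_j$ with one end in $(p_{j-1},p_j)$ and the other in $(p_j,p_{j+1})$. Neither end of $l$ is fixed, so iterating $g$ (or $g^{-1}$) moves both ends, and they converge to $p_{j-1}$ and $p_{j+1}$ respectively; closedness of $\Lambda_1$ yields the edge $(p_{j-1},p_{j+1})$ directly. Doing this for every even $j$ produces \emph{all} boundary edges of the convex hull of the odd-indexed points in one stroke, with no sweeping or unlinkedness bookkeeping needed. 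The analogous argument with $\Lambda_2$ gives the other polygon.
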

 \begin{proof}
 Say $\Fix_g = \{p_1, \ldots, p_n\}$ such that if we walk from $p_i$ along $S^1$ counterclockwise, then the first element of $\Fix_g$ we meet is $p_{i+1}$ (indexes are modulo $n$). Suppose $p_1 \in E_{\Lambda_1}$. Then by the argument in the proof of Proposition \ref{prop:nonparabolicofpcol2}, both $p_2$ and $p_n$ are not in $E_{\Lambda_1}$. If we apply this consecutively, one can easily see that $p_i \in E_{\Lambda_1}$ if and only if $i$ is odd. 
 
 Let $j$ be any even number. Since $p_j$ is not in $E_{\Lambda_1}$, there exists a rainbow at $j$. In particular, there exists a leaf $l$ in $\Lambda_1$ so that one end of $l$ lies between $p_j$ and $p_{j+1}$, and the other end lies between $p_j$ and $p_{j-1}$. Hence either $g^n(l)$ or $g^{-n}(l)$ converges to the leaf $(p_{j-1}, p_{j+1})$ as $n$ increases. So, the leaf $(p_{j-1}, p_{j+1})$ should be contained in $\Lambda_1$. Since $j$ was an arbitrary even number, this shows that $\Lambda_1$ contains the boundary leaves of the convex hull of the fixed points of $g$ with odd indices. 
 Similarly, one can see that $\Lambda_2$ must contain the boundary leaves of the convex hull of the fixed points of $g$ with even indices. Since the fixed points of $g$ alternate between attracting and repelling fixed points along $S^1$, the results follows. 
 \end{proof} 
 
 This shows that not only the pseudo-Anosov-like elements resemble the dynamics of pseudo-Anosov homeomorphisms but also their invariant laminations are like stable and unstable laminations of pseudo-Anosov homeomorphisms. 
 
 We introduce a following useful theorem of Moore \cite{MooreSphere} and an application in our context. 
\begin{thm}[Moore] 
 Let $\mathcal{G}$ be an upper semicontinuous decomposition of $S^2$ such that each element of $\mathcal{G}$ is compact and nonseparating. Then $S^2/\mathcal{G}$ is homeomorphic to $S^2$. 
\end{thm}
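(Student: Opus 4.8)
The plan is to recognize $Q := S^2/\mathcal{G}$ through a classical topological characterization of the $2$-sphere among Peano continua: a nondegenerate metric continuum $M$ is homeomorphic to $S^2$ if and only if $M$ is locally connected, no arc separates $M$, and every simple closed curve in $M$ separates $M$ into exactly two components. So the work is to verify these conditions for $Q$ and the quotient map $\pi\colon S^2\to Q$, which is monotone once one reads ``compact and nonseparating'' as ``non-separating subcontinuum'' (the conclusion fails for disconnected elements, e.g.\ collapsing two disjoint arcs).

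The soft points come first. Upper semicontinuity of $\mathcal{G}$ into compact sets makes $Q$ compact Hausdorff, hence a metric continuum (it is a continuous image of a compact metric space), and it is locally connected as a continuous image of the Peano continuum $S^2$; nondegeneracy holds as long as $\mathcal{G}\neq\{S^2\}$, which we assume. No point of $Q$ separates it: if $Q\setminus\{q\}=A\sqcup B$ with $A,B$ nonempty open, then $S^2\setminus\pi^{-1}(q)=\pi^{-1}(A)\sqcup\pi^{-1}(B)$ would exhibit $\pi^{-1}(q)\in\mathcal{G}$ as a separating element, a contradiction. Also $Q$ contains a simple closed curve, since a nondegenerate Peano continuum with no cut point always does.

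The main obstacle is everything one-dimensional: that no arc of $Q$ separates $Q$, and that every simple closed curve of $Q$ separates it into exactly two pieces. A naive pullback will not suffice here, because the $\pi$-preimage of an arc or of a simple closed curve in $Q$ is a subcontinuum of $S^2$ that need not itself be non-separating and certainly need not be a Jordan curve. The strategy I would use is to combine upper semicontinuity with plane topology: given such a set $C\subset Q$, cover $\pi^{-1}(C)$ by a thin saturated open neighborhood and, working inside it, approximate $\pi^{-1}(C)$ by an honest tame arc or simple closed curve in $S^2$, using the Schoenflies theorem to build Jordan curves tightly surrounding the relevant subcontinua; then apply the Jordan curve theorem (and the corresponding statement for arcs) in $S^2$, and push the resulting separation back down through $\pi$, invoking the non-separating hypothesis on the individual elements of $\mathcal{G}$ to discard any spurious complementary region. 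The delicate part is carrying out this approximation coherently for all of the (possibly uncountably many) nondegenerate elements of $\mathcal{G}$ at once; a clean modern way to package exactly this obstruction is Bing's shrinkability criterion, so an equivalent route would be to verify directly that a decomposition of $S^2$ into non-separating continua is shrinkable and then invoke that criterion to conclude $Q\cong S^2$.
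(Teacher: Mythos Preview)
The paper does not prove this theorem. It is quoted as a classical result of Moore, cited from \cite{MooreSphere}, and used as a black box in the subsequent Theorem~\ref{thm:Actionon2sphere}. There is therefore no proof in the paper to compare your proposal against.

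As for your sketch on its own merits: the route via a topological characterization of $S^2$ among Peano continua (no arc separates, every simple closed curve separates into two components) is one of the standard ways to recover Moore's theorem, and your soft preliminary steps are correct: upper semicontinuity into compacta makes $Q$ a compact metric space, it is a Peano continuum as the continuous image of one, and the non-separating hypothesis on elements of $\mathcal{G}$ immediately gives that $Q$ has no cut point. You are also right to note that the hypothesis must be read as ``non-separating continuum'' rather than merely ``compact non-separating set''; the conclusion is false otherwise.

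That said, what you have written is an outline, not a proof. You yourself flag the real difficulty---showing that no arc in $Q$ separates and that every simple closed curve separates into exactly two pieces---and then defer it, either to an unspecified approximation-by-Jordan-curves argument or to Bing's shrinkability criterion, neither of which you carry out. That is exactly where all the content of Moore's theorem lives; the preimage under $\pi$ of a simple closed curve in $Q$ can be a wild continuum, and organizing the Schoenflies/Jordan input across all nondegenerate decomposition elements is genuinely delicate. For the purposes of this paper your level of detail is appropriate, since the author only intends to cite the result, but as a standalone proof of Moore's theorem it is incomplete at the step you identify.
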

 
 A decomposition of a Hausdorff space $X$ is \emph{upper semicontinuous} if and only if the set of pairs $(x,y)$ for which $x$ and $y$ belong to the same decomposition element is closed in $X \times X$. A lamination $\Lambda$ of $S^1$ is called \emph{loose} if no point on $S^1$ is an endpoint of two leaves of $\Lambda$ which are not edges of a sinlge gap of $\Lambda$. 
 
\begin{thm} 
\label{thm:Actionon2sphere}
Let $G, \Lambda_1, \Lambda_2$ be as defined at the beginning of the section. We further assume that $G$ is torsion free and each $\Lambda_i$ is loose. Then $G$ acts on $S^2$ by homeomorphisms such that $|\Fix_g(S^2) := \{p \in S^2 : g(p) = p\}| \le 2$ for each $g \in G$. 
\end{thm}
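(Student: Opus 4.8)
The plan is to realize the desired sphere as a Moore-type quotient built from the two laminations, and then transport the $G$-action across the quotient. First I would form $S^2$ as the double $\overline{\DD}^{+}\cup_{S^1}\overline{\DD}^{-}$ of the closed disk along the circle on which $G$ acts, realizing $\Lambda_1$ as a geodesic lamination in $\overline{\DD}^{+}$ and $\Lambda_2$ as one in $\overline{\DD}^{-}$. Since each $\Lambda_i$ is very full it is dense (Lemma \ref{lem:veryfulllam}) and its complementary regions are finite-sided ideal polygons, so each closed gap is a closed topological disk. Let $\mathcal{G}$ be the decomposition of $S^2$ whose non-degenerate elements are the closed gaps of $\Lambda_1$ in $\overline{\DD}^{+}$, the closed gaps of $\Lambda_2$ in $\overline{\DD}^{-}$, and the closures of the geodesics representing leaves of $\Lambda_i$ that bound no gap. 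The looseness hypothesis is exactly what makes this clean: combined with Corollary \ref{cor:parabolicfixedpoints}, looseness forces $G$ to have no parabolic element (a cusp point would be an endpoint of infinitely many leaves of $\Lambda_i$, no two of which could be edges of a single finite-sided gap), so $E(\Lambda_1)\cap E(\Lambda_2)=\emptyset$; and looseness says each point of $S^1$ is an endpoint of at most two leaves of a given $\Lambda_i$, and when of two, these are the two corner edges of one gap. A short local check then shows the listed non-degenerate sets are pairwise disjoint, so no amalgamation is needed and every element of $\mathcal{G}$ is a closed disk, a closed arc, or a point; in particular each is compact with connected complement in $S^2$.

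Next I would check that $\mathcal{G}$ is upper semicontinuous. If $(E_n)$ are distinct non-degenerate elements with $x_n,y_n\in E_n$, $x_n\to x$, $y_n\to y$, and $x\neq y$, then, gaps and leaves being hyperbolically convex, the geodesic segment $[x_n,y_n]$ lies in $E_n$ and $[x_n,y_n]\to[x,y]$; disjointness of the $E_n$ forces the underlying leaves to converge to the complete geodesic through $x$ and $y$, which is therefore a leaf of $\Lambda_i$ whose $\mathcal{G}$-element contains both $x$ and $y$. So "lying in a common element" is a closed relation, Moore's theorem applies, and $\Sigma:=S^2/\mathcal{G}$ is homeomorphic to $S^2$; write $q\colon S^2\to\Sigma$ for the (closed) quotient map. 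Every $g\in G$ is a homeomorphism of $S^1$ preserving $\Lambda_1$ and $\Lambda_2$, hence permutes their leaves and gaps, hence permutes the elements of $\mathcal{G}$ respecting the two halves; this induces a bijection $\bar g\colon\Sigma\to\Sigma$, $\bar g(q(E))=q(gE)$. Continuity of $\bar g$ follows from the closedness of $q$: given a saturated open $W\supseteq gE$, the set $V=S^2\setminus q^{-1}\!\big(q(S^1\setminus g^{-1}(W\cap S^1))\big)$ is a saturated open neighborhood of $E$ with $g(V)\subseteq W$, so $\bar g(q(V))\subseteq q(W)$. Thus $\bar g$ is a continuous bijection of the compact Hausdorff space $\Sigma$, hence a homeomorphism, and $g\mapsto\bar g$ is a homomorphism $G\to\Homeo(S^2)$.

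For the fixed-point bound, recall that by the classification of elements of pants-like $\COL_2$ groups, together with the absence of parabolics forced above and the torsion-freeness of $G$, every non-trivial $g\in G$ is hyperbolic or pseudo-Anosov-like. If $g$ is hyperbolic with fixed points $a,r$, then $g$ has no periodic orbit of period $>1$, so any element of $\mathcal{G}$ fixed by the induced permutation has its finite vertex/point set contained in $\{a,r\}$; a gap has at least three sides, so no $g$-invariant gap exists, and a $g$-invariant leaf can only be the axis $(a,r)$, which lies in the same element as $a$. Hence the $g$-fixed elements of $\mathcal{G}$ all contain $a$ or $r$, so $|\Fix_{\bar g}(S^2)|\le 2$. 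If $g$ is pseudo-Anosov-like with attracting fixed points $a_1,\dots,a_n$ and repelling fixed points $r_1,\dots,r_n$, then by Theorem \ref{thm:polygonsinlam} and transversality one of $\Lambda_1,\Lambda_2$ — say $\Lambda_1$ — contains the attracting polygon and the other the repelling polygon; by looseness the edges of the attracting polygon chain together into a single element, collapsed to a point $p^{+}\in q(\overline{\DD}^{+})$ containing all the $a_i$, and likewise the repelling polygon yields $q^{-}\in q(\overline{\DD}^{-})$ containing all the $r_j$, with $p^{+}\neq q^{-}$ since they are images of distinct elements. Using the parity structure from the proof of Theorem \ref{thm:polygonsinlam} — the $r_j$ are not endpoints of leaves of $\Lambda_1$, the $a_i$ not endpoints of leaves of $\Lambda_2$ — one checks that every element of $\mathcal{G}$ other than these two flows forward under $g$ to $p^{+}$ and backward to $q^{-}$, so $\bar g$ has North–South dynamics on $S^2$ and $\Fix_{\bar g}(S^2)=\{p^{+},q^{-}\}$. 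In all cases $|\Fix_g(S^2)|\le 2$.

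The step I expect to be the real obstacle is establishing that $\mathcal{G}$ is an upper semicontinuous decomposition into cellular (non-separating) pieces — this is precisely where looseness is indispensable, since without it a point of $S^1$ could be a common endpoint of leaves belonging to different gaps of $\Lambda_i$, forcing a decomposition element to contain an essential loop that separates $S^2$ and breaks the Moore hypotheses. The secondary difficulty is the bookkeeping in the pseudo-Anosov-like case: identifying exactly which elements of $\mathcal{G}$ are $g$-invariant and verifying the North–South picture requires carefully tracking how vertex sets of gaps move under $g$ and exploiting the parity of $E(\Lambda_i)\cap\Fix_g$.
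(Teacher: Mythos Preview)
Your proposal is correct and follows the same Moore–quotient strategy as the paper: double the disk, place one lamination on each hemisphere, collapse leaves and gaps, and let the induced action on the quotient sphere be the desired action. Your treatment is in fact more careful than the paper's on two points. First, you observe that looseness together with Corollary~\ref{cor:parabolicfixedpoints} forces $G$ to have no parabolic elements; the paper does not extract this and even remarks afterward that ``the assumption that $G$ does not have parabolic elements seems unnecessary'' without noticing it is already implied. Second, for the fixed-point bound the paper asserts the inequality $|\Fix_g(S^1)|\ge|\Fix_g(S^2)|$ and then argues that when $|\Fix_g(S^1)|>2$ the attracting and repelling fixed points collapse; but that inequality is not justified and indeed fails for a pseudo-Anosov-like $g$ that cyclically permutes the fixed points of $g^m$ (such a $g$ has no fixed point on $S^1$ yet the attracting and repelling polygons are $\bar g$-invariant). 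Your case analysis---showing that any $g$-invariant decomposition element must have its vertex set inside the periodic points of $g$, and then using looseness and the parity from Theorem~\ref{thm:polygonsinlam} to pin down those elements---is the honest way to get $|\Fix_{\bar g}(S^2)|\le 2$, and it closes that gap.
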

\begin{proof} Let $D_1$ and $D_2$ are disks glued along their boundaries, and we consider this boundary as the circle where $G$ acts. So we get a 2-sphere, call it $S_1$, such that $G$ acts on its equatorial circle. Put $\Lambda_i$ on $D_i$ for each $i = 1, 2$. One can first define a relation on $S_1$ so that two points are related if they are on the same leaf or the same complementary region of $\Lambda_i$ for some $i$. Let $\sim$ be the closed equivalence relation generated by the relation we just defined. 

  It is fairly straightforward to see that $\sim$ satisfies the condition of Moore's theorem from the looseness. Looseness, in particular, implies that each equivalence class of $\sim$ has at most finitely many points in $S^1$. 
  
  
  
  This concludes that $S_2 := S_1/\sim$ is homeomorphic to a 2-sphere, and let $p : S_1 \to S_2$ be the corresponding quotient map. Clearly, $p$ is surjective even after restricted to the equatorial circle, call it $p$ again. Now we have a quotient map $p : S^1 \to S_2 = S^2$, hence $G$ has an induced action on $S^2$ by homeomorphisms. 
 Note that $|\Fix_g(S^1)| \ge |\Fix_g(S^2)|$ for each $g \in G$. But we know that if $g \in G$ has more than two fixed points on $S^1$, its attracting fixed points are mapped to a single point by $p$ by Theorem \ref{thm:polygonsinlam}. Similarly, the repelling fixed points are mapped to a single point too. Hence, $g$ can have at most two fixed points in any case. 
\end{proof}
 
 The assumption that $G$ does not have parabolic elements seems unnecessary, but it is probably much trickier to prove that each equivalence class of $\sim$ is non-separating under the existence of parabolic elements. It is also not so clear if the action on $S^2$ we obtained in the above theorem is always a convergence group action. 

From what we have seen, it is conceivable that $G$ contains a subgroup of the form $H \rtimes \ZZ$ where $H$ is a pants-like $\COL_3$ group and $\ZZ$ is generated by a pseudo-Anosov like element (unless $G$ itself is a pants-like $\COL_3$ group). Maybe one can hope the following conjecture to be true (possibly modulo Cannon's conjecture \cite{CannonConjecture}). 

\begin{conj} 
\label{conj:3manifoldgroupversionconjecture}
Let $G$ be a finitely generated torsion-free discrete subgroup $\Homeop(S^1)$. Then $G$ is virtually a pants-like $\COL_2$ group with loose laminations if and only if $G$ is virtually a hyperbolic 3-manifold group. 
\end{conj}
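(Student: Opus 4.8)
The plan is to prove the two implications separately; the implication ``virtually a hyperbolic $3$-manifold group $\Rightarrow$ virtually pants-like $\COL_2$ with loose laminations'' is the more tractable one, and the converse is where the real difficulty lies.

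\medskip
\noindent\emph{Hyperbolic $3$-manifold groups are virtually pants-like $\COL_2$ with loose laminations.} Looseness is restrictive: by Corollary \ref{cor:parabolicfixedpoints} a parabolic fixed point is an endpoint of infinitely many leaves of any very-full invariant lamination, so a loose very-full lamination has no parabolic fixed point, and the collection we must produce forces $G$ to be parabolic-free. In the finite-volume case the relevant hyperbolic $3$-manifolds are therefore closed, and by Agol's virtual fibering theorem we may, after passing to a finite-index subgroup, assume $G\cong\pi_1(\Sigma)\rtimes_\phi\ZZ$ with $\Sigma$ a closed hyperbolic surface and $\phi$ pseudo-Anosov. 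This is the universal-circle picture already used in Section \ref{sec:col12inf}: on $S^1=\partial_\infty\widetilde\Sigma$ the group $\pi_1(\Sigma)$ acts Fuchsianly, $\phi$ acts by the boundary extension of a lift, and straightening and lifting the stable and unstable foliations of $\phi$ yields $G$-invariant laminations $\Lambda^{\pm}$ of $S^1$. It then remains to check three properties: every complementary region is a finite-sided ideal polygon, one per singular point of $\phi$ (very-fullness); no leaf of $\Lambda^{+}$ shares an ideal endpoint with a leaf of $\Lambda^{-}$, because there are no cusps and the stable and unstable geodesic laminations of a pseudo-Anosov on a closed surface are strongly transverse (the pants-like property); and inside each $\Lambda^{\pm}$ the only coincidences of endpoints are between the two polygon sides meeting at a singular vertex (looseness). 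The geometrically finite infinite-volume cases reduce to Fuchsian groups, which are already pants-like $\COL_3\subset\COL_2$ with loose laminations by Theorem \ref{thm:fuchsianispcol3}; the general tame case would require running the tameness/covering structure backwards and is the least routine part of this direction.

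\medskip
\noindent\emph{The converse.} Pass to a finite-index subgroup so that $G$ itself is a torsion-free pants-like $\COL_2$ group with loose laminations $\{\Lambda_1,\Lambda_2\}$, and assume as throughout Section \ref{sec:pcol2} that each element has finitely many fixed points. By the observation above $G$ has no parabolic element, so by the classification of Section \ref{sec:pcol2} every element is hyperbolic or pseudo-Anosov-like. If there is no pseudo-Anosov-like element then $|\Fix_g|\le 2$ for all $g$, and, assuming the earlier conjecture that a torsion-free discrete group with this property is Fuchsian, $G$ is a surface group, hence $\pi_1(\Sigma\times\RR)$. Otherwise Theorem \ref{thm:Actionon2sphere} applies --- its parabolic gap being vacuous here --- and gives a faithful action of $G$ on $S^2$ by homeomorphisms with $|\Fix_g(S^2)|\le 2$ for all $g$. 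The crucial step is to upgrade this to a convergence group action, that is, proper discontinuity of $G$ on the space of ordered triples of distinct points of $S^2$. I would transplant the scheme of Section \ref{sec:convergencegroup}: a failure of proper discontinuity yields, via Lemma \ref{lem:notproperlydiscontinuous}, sequences $x_i\to x$ and $h_i(x_i)\to x'$ with three-point spread, and the images under the Moore quotient $S^1\to S^2$ of $\Lambda_1$ and $\Lambda_2$ (one filling each hemisphere) should supply the nested families of disks that play the role of the nested arcs $I^x_i, I^{x'}_i$ in Theorem \ref{thm:pcol3isconvergence}, with the North--South dynamics of Proposition \ref{prop:nonparabolicofpcol2} and Theorem \ref{thm:polygonsinlam} forcing the contradiction. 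Granting the convergence property, Bowditch's characterization of hyperbolic groups via uniform convergence actions makes $G$ word-hyperbolic with Gromov boundary $S^2$ when the action on triples is cocompact, and Cannon's conjecture then identifies $G$ with a cocompact Kleinian group; in the non-cocompact case one invokes the tameness theorem of Agol and of Calegari--Gabai together with Canary's covering theorem to realize $G$ virtually as a hyperbolic $3$-manifold group.

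\medskip
The principal obstacle is precisely this promotion of the $S^2$-action to a convergence action. Every argument in Sections \ref{sec:rainbow}--\ref{sec:convergencegroup} runs on the linking/unlinking combinatorics of chords of a disk, and the Moore collapse $S^1\to S^2$ destroys that structure; one must first isolate the right planar replacement for ``a leaf separates two points'' on $S^2$ --- an image complementary polygon, or a Jordan-curve separation statement --- before the nested-disk argument can be made precise, and it is unclear whether planarity of $S^2$ alone suffices or whether looseness must be used more heavily. A second, unavoidable obstacle is that passing from ``uniform convergence group on $S^2$'' to ``virtually a hyperbolic $3$-manifold group'' is literally the content of Cannon's conjecture, so the statement is conditional as phrased; a safe fallback is to prove unconditionally only that $G$ is virtually a word-hyperbolic group with $2$-sphere boundary, or, when $G$ visibly contains a subgroup $H\rtimes\ZZ$ with $H$ a pants-like $\COL_3$ group and the $\ZZ$ generated by a pseudo-Anosov-like element, to build the hyperbolic $3$-manifold directly as the mapping torus obtained by suspending the Fuchsian structure on $H$ provided by the Main Theorem (Corollary \ref{cor:pcol3ifffuchsian}).
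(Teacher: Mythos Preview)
The statement is a \emph{conjecture}; the paper does not prove it. The paragraph immediately following the conjecture does exactly what you do for the forward direction---invoke Agol's virtual fibering theorem to pass to a fibered finite-index subgroup and take the stable/unstable laminations of the pseudo-Anosov monodromy---and in fact the paper is terser than you, saying nothing about verifying looseness. For the converse the paper explicitly says that what is needed is to show that a pants-like $\COL_2$ group has a finite-index subgroup which is word-hyperbolic and acts on $S^2$ as a convergence group, after which Cannon's conjecture would finish; your plan and your identification of ``promotion of the $S^2$-action to a convergence action'' as the principal obstacle is precisely the paper's own assessment. So your proposal is not a proof, but it is faithful to the paper's discussion, and the paper offers nothing further for you to compare against.

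One genuine gap in your forward direction: your appeal to Theorem \ref{thm:fuchsianispcol3} for the infinite-volume cases does not give loose laminations. The laminations built there have leaves spiraling into the cuffs of the pants decomposition, so each hyperbolic fixed point on a cuff is the endpoint of infinitely many leaves lying in \emph{distinct} triangular gaps; this violates looseness by definition. Hence ``geometrically finite infinite-volume $\Rightarrow$ Fuchsian $\Rightarrow$ loose pants-like $\COL_2$'' is not justified by that theorem. (The paper's own sketch of the forward direction does not attempt to verify looseness at all, so this is a point where you go beyond the paper and where more care is needed.)
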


If $G$ is a hyperbolic 3-manifold group, then Agol's Virtual Fibering theorem in \cite{AgolVF} says that $G$ has a subgroup of finite index which fibers over the circle. Hence the result of Section 3 implies that such a subgroup is $\COL_2$. The laminations we have are stable and unstable laminations of a pseudo-Anosov map of a hyperbolic surface, hence they form a pants-like collection of two very-full laminations. This proves one direction of the conjecture. 
 Cannon's conjecture says that if a word-hyperbolic group with ideal boundary homeomorphic to $S^2$ acts on its boundary faithfully, then the group is a Kleinian group. To prove the converse of Conjecture \ref{conj:3manifoldgroupversionconjecture} requires one to show that a pants-like $\COL_2$ group has a subgroup of finite index which is word-hyperbolic and acts on $S^2$ as a convergence group. 




\section{Future Directions} 
\label{sec:fucturedirection}
We have seen that having two or three very-full laminations restricts the dynamics of the group action quite effectively. One can still study what we can conclude about the group when we have mere dense laminations (not necessarily very-full). In any case, the most interesting question is about the difference between having two laminations and three laminations. Thurston conjectured that tautly foliated 3-manifold groups are strictly $\COL_2$ (we know that they are $\COL_2$). Hence, one can ask following questions:

 
 \begin{ques} 
 What algebraic properties of a group $G$ we could deduce from the assumption that $G$ is strictly $\COL_2$, ie., $\COL_2$ but not $\COL_3$? 
 \end{ques} 
 
 \begin{ques}
 Precisely which 3-manifold groups are strictly $\COL_2$? 
 \end{ques} 
 
  More ambitiously, one may ask: 
  
  \begin{ques}
  Can one construct an interesting geometric object like a taut foliation or an essential lamination in a 3-manifold $M$ if we know $\pi_1(M)$ is strictly $\COL_2$? 
  \end{ques}

  It would be also interesting if one can characterize the difference between strictly $\COL_1$ groups and $\COL_2$ group. The example of strictly $\COL_1$ group we constructed suggests that in order to be $\COL_2$, a group should not have too many homeomorphisms with irrational rotation number. It is conceivable that the way the example constructed is essentially the only way to get strictly $\COL_1$ property.

 Another important direction would be to classify all possible $\COL_n$-representations of an abstract group $G$. This is related to the classification of all circular orderings on $G$. The author is preparing a paper about the action of the automorphims of $G$ on the space of all circular orderings which $G$ can admit. For example, the $\Aut(G)$ acts faithfully on the space of circular ordering of $G$ if $G$ is residually torsion-free nilpotent. 
 
We also remark that the virtual fibering theorem of Agol and the universal circle theorem for the fibering case together imply that the following conjecture holds if Cannon's conjecture holds. 
 
 \begin{conj} 
 Let $G$ be a word-hyperbolic group whose ideal boundary is homeomorphic to a 2-sphere, and suppose that $G$ acts faithfully on its boundary. Then $G$ is virtually $\COL_2$. 
 \end{conj}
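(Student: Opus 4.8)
The plan is to establish this conjecture conditionally on Cannon's conjecture \cite{CannonConjecture}, by stringing together three ingredients: Cannon's conjecture to put a hyperbolic structure on (a finite-index subgroup of) $G$, Agol's virtual fibering theorem to pass to a fibered finite cover, and the construction of Section~\ref{sec:col12inf} to exhibit two transverse dense invariant laminations on the circle at infinity of the fiber surface. In other words, I would flesh out the remark preceding the conjecture in the text.

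First I would invoke Cannon's conjecture: a word-hyperbolic group $G$ with $\partial_\infty G \cong S^2$ acting faithfully on its boundary acts geometrically on $\HH^3$, hence is a uniform lattice in $\mathrm{Isom}(\HH^3)$. Passing to a torsion-free subgroup of finite index (Selberg's lemma) we may assume this subgroup equals $\pi_1(M)$ for a closed hyperbolic $3$-manifold $M$. Next, Agol's virtual fibering theorem \cite{AgolVF} gives a finite cover $M' \to M$ which fibers over the circle, so $\pi_1(M') \cong \pi_1(S) \rtimes_{\phi} \ZZ$ where $S$ is the fiber --- a closed orientable surface, necessarily of genus $\ge 2$ since $M'$ is hyperbolic --- and $\phi$ is a pseudo-Anosov homeomorphism of $S$. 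Finally I would apply the construction of Section~\ref{sec:col12inf} verbatim: lifting the stable and unstable laminations $\Lambda^{\pm}$ of $\phi$ to $\HH^2$ and recording endpoint data produces laminations $\overline{\Lambda}^{\pm}$ on $S^1 = \partial_\infty \HH^2$ which are dense by Lemma~\ref{lem:liftinglamdense}, transverse, and invariant under the faithful action of $\pi_1(S) \rtimes \langle \overline{\phi}\rangle$ on $S^1$; hence that action is $\COL_2$. Since $\pi_1(S) \rtimes \langle \overline{\phi}\rangle \cong \pi_1(M')$ is a finite-index subgroup of $G$, the group $G$ is virtually $\COL_2$.

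The main obstacle is of course Cannon's conjecture itself, which is open; without it there is no route from the abstract data of a word-hyperbolic group with $2$-sphere boundary to a hyperbolic (or even a $3$-manifold) structure, hence no handle on a fibered subgroup at all. The remaining steps are routine bookkeeping: one checks that the finite-index subgroup supplied by virtual fibering is torsion-free (automatic for closed hyperbolic $3$-manifold groups) and that its fiber is a higher-genus surface rather than a torus (automatic by hyperbolicity), so that the hypotheses of the Section~\ref{sec:col12inf} construction are literally satisfied; and one notes that the circle on which the fibered subgroup is shown to be $\COL_2$ is the fiber's ideal boundary, a priori unrelated to the sphere $\partial_\infty G$ --- which is harmless, since ``virtually $\COL_2$'' asserts only that \emph{some} finite-index subgroup admits \emph{some} faithful $\COL_2$ action on \emph{some} circle.
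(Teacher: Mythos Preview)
Your proposal is correct and matches exactly what the paper itself says: this statement is labeled a \emph{conjecture}, and the paper only remarks (in the sentence immediately preceding it) that it follows from Cannon's conjecture via Agol's virtual fibering theorem together with the $\COL_2$ construction for fibered manifolds. Your write-up simply fleshes out that remark, correctly noting that the argument is conditional on Cannon's conjecture and that the remaining steps (Selberg, hyperbolicity of the fiber, pseudo-Anosov monodromy, and the Section~\ref{sec:col12inf} construction of $\overline{\Lambda}^{\pm}$) are routine.
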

 
 At the end of the last section, we formulated a conjecture about being virtually a 3-manifold group which fibers over $S^1$. In the view of Vlad Markovic's recent work in \cite{MarkovicCCC}, pants-like $\COL_3$ subgroups of a pants-like $\COL_2$ word-hyperbolic group $G$ are good candidates for quasi-convex codimension-1 subgroups whose limit sets separate pairs of points in the boundary of $G$. 

\bibliographystyle{alpha}
\bibliography{biblio}

\end{document}